\newcommand{\pup}[1]{\textup{(}#1\textup{)}}
\theoremstyle{theorem}
\newtheorem{coro}[equation]{Corollary}
\newtheorem{lemm}[equation]{Lemma}
\newtheorem{prop}[equation]{Proposition}
\newtheorem*{propA}{Proposition~A {\rm(Prop.\,\ref{P:PropA})}}
\newtheorem*{propB}{Proposition~B {\rm(Prop.\,\ref{P:PropB})}}
\newtheorem*{propC}{Proposition~C {\rm(Prop.\,\ref{P:PropC})}}
\theoremstyle{definition}
\newtheorem{defi}[equation]{Definition}
\newtheorem{exam}[equation]{Example}
\newtheorem{nota}[equation]{Notation}
\newtheorem{rema}[equation]{Remark}
\def\AArrow(#1,#2){\ncline[nodesep=0.3mm, linewidth=0.8pt, border=1.2pt]{->}{#1}{#2}}
\def\BArrow(#1,#2){\ncline[linecolor=blue, linewidth=1.2pt, linestyle=dotted, nodesep=0.3mm, border=1.2pt]{->}{#1}{#2}}
\def\CArrow(#1,#2){\ncline[linecolor=red, nodesep=0.3mm, linewidth=0.8pt, border=1.2pt]{->}{#1}{#2}}
\definecolor{color1}{rgb}{.88,.88,.88}
\definecolor{color2}{rgb}{0.85,.92,1}%bleu
\definecolor{color20}{rgb}{0.9,.95,1}%bleu
\definecolor{color21}{rgb}{0.8,.9,1}%bleu
\definecolor{color22}{rgb}{0.75,.85,1}%bleu
\definecolor{color3}{rgb}{0,0,1}
\definecolor{color2}{rgb}{1,.85,0.85}%bleu
\definecolor{color20}{rgb}{.88,.88,.88}%bleu
\definecolor{color21}{rgb}{1,.8,.8}%bleu
\definecolor{color22}{rgb}{1,.75,.75}%bleu
\definecolor{color3}{rgb}{1,0,0}
\def\WPoint(#1,#2,#3){\cnode[style=thin,fillcolor=white,fillstyle=solid](#1,#2){0.7}{#3}}
\def\AArrow(#1,#2){\ncline[nodesep=1mm, linewidth=0.8pt, border=2pt]{->}{#1}{#2}}
\def\PArrow(#1,#2){\ncline[linestyle=dashed, nodesep=0.3mm, linewidth=0.8pt, border=0.8pt]{->}{#1}{#2}}
\numberwithin{equation}{section}
\newcounter{ITEM}
\newcommand\ITEM[1]{\setcounter{ITEM}{#1}\leavevmode\hbox{\rm(\roman{ITEM})}}
\renewcommand\aa{a}
\renewcommand\AA{A}
\newcommand\aav{\U\aa}
\newcommand\act{\mathbin{\scriptscriptstyle\bullet}}
\renewcommand\and{\text{and}}
\newcommand\bb{b}
\newcommand\BB{B}
\newcommand\bbv{\U\bb}
\newcommand\can{\iota}
\newcommand\card{\mathtt\#}
\newcommand{\Cat}[1]{\operatorname{Cat}(#1)}
\newcommand\cc{c}
\newcommand\CC{C}
\newcommand\CCC{\mathcal{C}}
\newcommand\ccv{\U\cc}
\newcommand\ConjA{\mathbf{A}}
\newcommand\ConjB{\mathbf{B}}
\newcommand\dd{d}
\newcommand\DD{D}
\newcommand\ddv{\U\dd}
\renewcommand\dh[1]{\Vert#1\Vert}
\renewcommand\div{<}
\newcommand\dive{\le}
\newcommand\divet{\mathrel{\widetilde{\VR(1.8,0)\smash{\dive}}}}
\newcommand\divt{\mathrel{\widetilde{\VR(1.8,0)\smash{<}}}}
\newcommand\ee{e}
\newcommand\EG[1]{\mathrm{U_{gp}}(#1)}
\newcommand\FG[1]{\mathrm{F_{\!gp}}(#1)}
\newcommand\ef{\varnothing}
\newcommand\eps{\varepsilon}
\newcommand\ff{f}
\newcommand\FF{F}
\newcommand\FRb[1]{\mathcal{F}^{\scriptscriptstyle\pm}_{\HS{-0.6}#1}}% new notation
\renewcommand\gcd{\wedge}
\newcommand\gcdt{\mathbin{\widetilde\wedge}}
\renewcommand\ge{\geqslant}
\renewcommand\gg{g}
\newcommand\HS[1]{\hspace{#1ex}}
\newcommand\ie{i.e.}
\newcommand\ii{i}
\newcommand\II{I}
\newcommand\IM[1]{\mathrm{Int}(#1)}
\newcommand\INF[2]{#1^{\le#2}}
\newcommand\inv{^{-1}}
\newcommand\INV[1]{\overline{#1}}
\newcommand\jj{j}
\newcommand\JJ{J}
\newcommand\kk{k}
\newcommand\KK{K}
\newcommand\lcm{\vee}
\newcommand\lcmt{\mathbin{\widetilde\lcm}}
\renewcommand\le{\leqslant}
\newcommand\MM{M}
\newcommand\MMA{M_{\HS{-0.2}A}}
\newcommand\MMAA[1]{M_{\HS{-0.2}A, #1}}
\newcommand\MMB{M_{\HS{-0.2}B}}
\newcommand\MMCC[1]{M_{\HS{-0.2}C, #1}}
\newcommand\MMD{M_D}
\newcommand\MON[2]{\langle#1\mid#2\rangle^{\!+}}
\newcommand\mult{>}
\newcommand\multe{\ge}
\newcommand\multt{\mathrel{\widetilde{\VR(1.8,0)\smash{>}}}}
\newcommand\multet{\mathrel{\widetilde{\VR(1.8,0)\smash{\ge}}}}
\newcommand\NF{\textsc{nf}}
\newcommand\nn{n}
\newcommand\NN{N}
\newcommand\One[1]{\underline1_{#1}}
\newcommand\one{\underline1}
\newcommand\opp{\cdot}
\newcommand\pdots{\HS{0.2}{\cdot}{\cdot}{\cdot}\HS{0.2}}
\newcommand\piece{\mathrel{\triangleleft}}
\newcommand\piecet{\mathrel{\triangleright}}
\newcommand\pp{p}
\newcommand\PP{P}
\newcommand\PPA{P_{\HS{-0.3}A}}
\newcommand\PPAA[1]{P_{\HS{-0.3}A, #1}}
\newcommand\PPB{P_{\HS{-0.1}B}}
\newcommand\PPCC[1]{P_{\HS{-0.1}C, #1}}
\newcommand\PROD[2]{\mathrel{{#1}{\vert}{#2}}}
\newcommand\Pw{\mathfrak{P}}
\newcommand\qq{q}
\newcommand\quand{\quad\text{and}\quad}
\newcommand\rd{\Rightarrow}
\newcommand\rds{\rd^{\HS{-0.3}*}}
\renewcommand\red{\mathrm{red}}
\newcommand\Red[1]{R_{#1}}
\newcommand\resp{resp.,\ }
\newcommand\rr{r}
\newcommand\RR{R}
\newcommand\sdots{ / \pdots / }
\newcommand\Sim[1]{\mathcal{S}_{\HS{-0.2}#1}}
\newcommand\simeqb{\simeq^{\HS{-0.2}\scriptscriptstyle\pm}}
\newcommand\Simm[1]{\mathcal{S}^{\scriptscriptstyle\mathrm{min}}_{\HS{-0.2}#1}}
\newcommand\sr[1]{\partial_0#1}% source
\renewcommand\ss{s}
\renewcommand\SS{S}
\newcommand\SUP[2]{#1^{\ge#2}}
\newcommand\tg[1]{\partial_1#1}% target
\renewcommand\tt{t}
\newcommand\tta{\mathtt{a}}
\newcommand\ttb{\mathtt{b}}
\newcommand\ttc{\mathtt{c}}
\let\U=\underline
\newcommand{\Um}[1]{\operatorname{U_{mon}}(#1)}
\newcommand\uu{u}
\def\VR(#1,#2){\vrule width0pt height#1mm depth#2mm}
\newcommand\vv{v}
\newcommand\wdots{, ...\HS{0.2},}
\newcommand\xx{x}
\newcommand\XX{X}
\newcommand\xxv{\U{x}}
\newcommand\yy{y}
\newcommand\zz{z}
\title[Multifraction reduction III]{Multifraction reduction III: The case of interval monoids}
\author{Patrick Dehornoy}
\address{Laboratoire de Math\'ematiques Nicolas Oresme, CNRS UMR 6139, Universit\'e de Caen, 14032 Caen cedex, France, and Institut Universitaire de France}
\email{patrick.dehornoy@unicaen.fr}
\urladdr{www.math.unicaen.fr/\~{}dehornoy}
\author{Friedrich Wehrung}
\address{Laboratoire de Math\'ematiques Nicolas Oresme, CNRS UMR 6139, Universit\'e de Caen, 14032 Caen cedex, France}
\email{friedrich.wehrung01@unicaen.fr}
\urladdr{www.math.unicaen.fr/\~{}wehrung}
\keywords{poset; interval monoid; gcd-monoid; enveloping group; word problem; multifraction; reduction; embeddability; semi-convergence; circuit; zigzag}
\subjclass[2000]{06A12, 18B35, 20M05, 20F05, 20F10, 68Q42}
\begin{document}

\maketitle

\begin{abstract}
We investigate gcd-monoids, which are cancellative monoids in which any two elements admit a left and a right gcd, and the associated reduction of multifractions (arXiv:1606.08991 and 1606.08995), a general approach to the word problem for the enveloping group. Here we consider the particular case of interval monoids associated with finite posets. In this way, we construct gcd-monoids, in which reduction of multifractions has prescribed properties not yet known to be compatible: semi-convergence of reduction without convergence, semi-convergence up to some level but not beyond, non-embeddability into the enveloping group (a strong negation of semi-convergence).
\end{abstract}

%%%%
\section{Introduction}\label{S:Intro}

Reduction of multifractions~\cite{Dit, Diu} is a new approach to the word problem for Artin-Tits groups and, more generally, for groups that are enveloping groups of monoids in which the divisibility relations have weak lattice properties (``gcd-monoids''). It is based on a reduction system that extends the usual free reduction for free groups. 

It is proved in~\cite{Dit} that, when the ground monoid~$\MM$ satisfies an additional condition called the $3$-Ore condition, then the associated rewrite system (``reduction '') is convergent, leading to a solution of the word problem for the enveloping group of the monoid whenever convenient finiteness assumptions are met. Next, it is proved in~\cite{Diu} that, even when the $3$-Ore condition fails, typically in the case of Artin-Tits monoids that are not of FC~type~\cite{Alt, GoP2}, most consequences of the convergence of reduction, in particular a solution for the word problem of the enveloping group, can still be obtained when a weak form of convergence called \emph{semi-convergence} is satisfied. It is conjectured that reduction is semi-convergent for every Artin-Tits monoid~$\MM$. Computer experiments and partial results support this conjecture, but it remains so far open. The connection with the word problem for Artin-Tits groups, an open question in the general case, makes investigating the semi-convergence of reduction both natural and important.

 After~\cite{Diu}, little was known about the strength of semi-converg\-ence of reduction. In particular, the following questions were left open: Is semi-convergence strictly weaker than convergence? In the opposite direction, could it be that reduction is semi-convergent for every gcd-monoid~$\MM$? The aim of the current paper is to answer the above two questions, in a strong sense. A sequence of approximations of semi-convergence called $\nn$-semi-convergence, $\nn$ even $\ge 2$, is introduced in~\cite{Diu}. Here we prove

\begin{propA}
There exists an explicit gcd-monoid~$\MMA$ for which reduction is semi-convergent but not convergent.
\end{propA}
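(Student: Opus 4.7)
The plan is to produce the monoid $\MMA$ as the interval monoid of a carefully chosen finite poset $A$. The general machinery for interval monoids developed in the paper presumably gives sufficient combinatorial conditions on $A$ ensuring that $\MMA$ is a gcd-monoid, so the first task is to identify a small poset --- I would search among posets with four to six elements, since any counterexample to convergence requires branching behaviour that cannot occur in a chain or antichain, and the paper emphasises an \emph{explicit} construction. The guiding intuition is that convergence fails when two reductions applied to the same multifraction lead to distinct irreducible normal forms, whereas semi-convergence holds when every multifraction representing $1$ in the enveloping group can still be reduced to the trivial one by \emph{some} sequence of moves. The poset $A$ must therefore encode a local obstruction to confluence while still admitting a global ``detour'' reducing every trivial multifraction to $\varnothing$.

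Once $A$ is fixed, I would invoke the general interval-monoid results to see that $\MMA$ is cancellative and possesses left and right gcds, so it is a gcd-monoid in the sense of the prior papers \cite{Dit, Diu}. The semi-convergence of reduction then needs to be proved by a combination of two ingredients. First, I would use the explicit combinatorial description of divisibility in $\MMA$ inherited from the poset structure to describe all irreducible multifractions of small depth, typically by enumerating reduced words up to the equivalences induced by the defining relations. Second, I would exhibit, for every multifraction representing $1$ in the enveloping group $\EG\MMA$, a concrete sequence of reductions terminating at $\varnothing$; the argument should be uniform, using an induction on depth together with a complexity measure (such as total length, or height in the poset) that strictly decreases under a suitable choice of reduction step.

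For the failure of convergence I would look for a single multifraction $\vec{a}$ admitting two distinct maximal reduction sequences, one collapsing to $\varnothing$ and another ending at a non-trivial irreducible multifraction $\vec{b}$ with $\vec{a}\simeqp\vec{b}$. The natural pattern producing such behaviour in an interval monoid is a small ``cycle'' or zigzag in the Hasse diagram of $A$: it forces local choices that commit to one of several incompatible reduction branches. Displaying this multifraction and verifying that $\vec{b}$ is irreducible reduces to a finite check, once the full set of possible reductions at $\vec{b}$ has been listed.

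The main obstacle is the delicate balancing act in choosing $A$: one needs a poset rich enough to create non-confluent local reductions yet simple enough that semi-convergence can still be proved exhaustively. In practice the verification of semi-convergence --- a global, quantified statement over all multifractions in the kernel of $\MMA\to\EG\MMA$ --- is the heavier half of the work, since non-convergence only requires a single explicit witness. Keeping the poset small enough that semi-convergence can be handled by a concrete inductive argument, rather than by open-ended case analysis, is where I expect the effort to concentrate.
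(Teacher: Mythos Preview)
Your overall shape is right --- build $\MMA$ as the interval monoid of a small finite poset, check the gcd-monoid property via the local-lattice criterion, and then separate the two halves of the claim --- but the plan for semi-convergence, as you describe it, is not executable and misses the paper's key idea. You propose to verify semi-convergence by an induction over \emph{all} unital multifractions, with some unspecified complexity measure. There are infinitely many such multifractions even over a finite poset, and no obvious measure decreases under an arbitrary reduction step; this is exactly the obstacle the paper's machinery is built to circumvent. The paper first shows (Proposition~\ref{P:PosBase}, via the highlighting expansion into $\FG{\PP}*\EG{\IM\PP}$) that every $\piece$-minimal unital multifraction on an interval monoid is \emph{simple}, i.e.\ each entry is a single interval with matching endpoints. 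This reduces semi-convergence to a \emph{finite} check on simple closed zigzags in the Hasse diagram (Corollary~\ref{C:Base}, Proposition~\ref{P:PosNC}), and then the explicit poset~$\PPA$ is chosen so that a one-line structural condition (any two elements with a common upper bound have a common lower bound, trivially true because~$\PPA$ has a bottom element) forces every such zigzag to be reducible. Without the reduction to simple multifractions, your ``heavier half'' has no clear endpoint.

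For non-convergence you plan to exhibit a specific multifraction with two incompatible irreducible reducts. That would work, but the paper takes a shorter route: it quotes from~\cite{Dit} that convergence of reduction implies the $3$-Ore condition, and then simply observes that in~$\MMA$ the three atoms $\tta,\ttb,\ttc$ pairwise admit common right multiples but have no global one. No witnessing multifraction is needed. Finally, two small corrections: the actual poset~$\PPA$ has seven elements, not four to six; and unital multifractions reduce to~$\one$ (a trivial multifraction of the same depth), not to the empty multifraction~$\ef$.
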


\begin{propB}
There exists an explicit gcd-monoid~$\MMB$ for which reduction is not $2$-semi-convergent, which amounts to saying that the monoid~$\MMB$ does not embed into the enveloping group of~$\MMB$.
\end{propB}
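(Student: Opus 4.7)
\smallskip
\noindent\textbf{Proposal.}
The strategy is to construct an explicit finite poset $P_B$ so that the associated interval monoid $\MMB$ is a gcd-monoid but fails to embed into its enveloping group $\EG{\MMB}$. Guided by the keywords \emph{circuit} and \emph{zigzag} of the abstract, I would look for a combinatorial pattern in $P_B$ that forces, on the group side, an identification between two ``parallel paths'' in the Hasse diagram---parallel paths that must however remain distinct on the monoid side because none of the diamond-type relations of the interval presentation directly equates them. Concretely, I would aim to exhibit two monoid elements $\uu, \vv \in \MMB$ such that $\uu = \vv$ in $\EG{\MMB}$ while $\uu \ne \vv$ in $\MMB$; the equivalence between embeddability and $2$-semi-convergence (the content of the second clause of the statement, proved in~\cite{Diu}) then gives the conclusion.

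The proof would proceed in three steps. First, verify that $\MMB$ is a gcd-monoid: this should be reducible to a general criterion on the underlying poset $P_B$ (appearing earlier in the paper or in~\cite{Dit}) ensuring that two interval generators admit left and right gcds, typically read off from meets and joins in~$P_B$. Second, establish $\uu = \vv$ in $\EG{\MMB}$: I would write down an explicit sequence of elementary moves---multiplications of a word for~$\uu$ by a generator and the formal inverse of another generator, using the defining relations of~$P_B$---that converts~$\uu$ into~$\vv$. Informally, this is a zigzag in $P_B$ that closes up into a circuit only after passage to the enveloping group. Third, establish $\uu \ne \vv$ in $\MMB$: I would construct an invariant on $\MMB$ that separates $\uu$ from $\vv$, for instance a monoid homomorphism to a tractable target (a free partially commutative monoid, a labelled-path semigroup, or a grading modulo a carefully chosen congruence compatible with all diamond relations of~$P_B$).

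The main obstacle is the third step: the target monoid into which $\MMB$ maps must simultaneously be coarse enough to collapse every relation of the interval presentation of $P_B$, yet fine enough to detect the discrepancy between $\uu$ and $\vv$, which becomes invisible only in the presence of formal inverses. Any invariant that is too close to path length or endpoint information will either be killed by the relations or fail to distinguish $\uu$ from $\vv$. I therefore expect the design of $P_B$ and of the separating invariant to proceed together: one first guesses a small zigzag pattern responsible for the identification in the group, then enlarges $P_B$ minimally so that no further relations accidentally identify $\uu$ and $\vv$ in the monoid, and finally certifies non-equality either by a direct normal-form computation or, preferably, via a well-chosen homomorphism whose construction is tailored to the combinatorics of the zigzag. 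Once $\uu \ne \vv$ in $\MMB$ and $\uu = \vv$ in $\EG{\MMB}$ are both in hand, the failure of $2$-semi-convergence follows immediately from the cited characterization in~\cite{Diu}.
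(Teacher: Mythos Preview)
Your plan has a genuine gap at the very first move: you propose to take $\MMB$ to be the interval monoid~$\IM{\PPB}$ of some finite poset~$\PPB$. But Proposition~\ref{P:NF} of this paper shows that \emph{every} interval monoid embeds into its enveloping group (indeed into a free group, via $[\xx,\yy]\mapsto \xx^{-1}\yy$). So no interval monoid can witness the failure of $2$-semi-convergence, and your Step~3---proving $\uu\ne\vv$ in~$\MMB$---is doomed for any~$\uu,\vv$ that become equal in the group. The paper states this obstruction explicitly at the start of Section~\ref{S:Embed}.

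The paper's actual construction sidesteps this by taking~$\MMB$ to be not~$\IM{\PPB}$ but a monoid \emph{above} it: one starts from the presentation~\eqref{Eq:IntMon} of~$\IM{\PPB}$ for the truncated $4$-cube~$\PPB$ and \emph{deletes} one diamond relation, namely $[1,12][12,123]=[1,13][13,123]$. The resulting monoid~$\MMB$ surjects onto~$\IM{\PPB}$ but is strictly larger. In the enveloping group, the deleted relation is recoverable by routing around the other eleven diamonds of the cube (the explicit derivation is given), so the two sides coincide in~$\EG{\MMB}$; in the monoid they do not, which is exactly your~$\uu\ne\vv$. The delicate point---and the one your proposal does not anticipate---is that after deleting a relation one must re-verify that~$\MMB$ is still a gcd-monoid: this is no longer read off from the local-lattice criterion for~$\PPB$, and the paper appeals to the ``extreme spindle'' machinery of~\cite{Weh} (or alternatively to direct computation via~\cite{Dir}) to certify it.
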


\begin{propC}
For every~$\nn \ge 4$ even, there exists an explicit gcd-monoid~$\MMCC\nn$ for which reduction is $\pp$-semi-convergent for~$\pp < \nn$ but not $\nn$-semi-convergent. For $\nn = 4$, this amounts to saying that there exists a gcd-monoid~$\MMCC4$ that embeds into the enveloping group of~$\MMCC4$ but some fraction in this group has more than one irreducible expression.
\end{propC}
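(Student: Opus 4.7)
The plan is to construct, for each even $\nn \ge 4$, a finite poset $P_\nn$ whose interval monoid $\IM{P_\nn}$ serves as $\MMCC\nn$. The design principle is to arrange $P_\nn$ so that a single ``zigzag circuit'' of length exactly $\nn$ produces two distinct irreducible depth-$\nn$ multifractions representing the same element of $\EG{\IM{P_\nn}}$, while no shorter zigzag gives rise to an analogous conflict. The base case $\nn = 4$ should already display the key phenomenon: the poset $P_4$ must avoid any depth-$2$ obstruction, so that cancellation in $\IM{P_4}$ is strict and the monoid embeds into its enveloping group, yet it must carry a genuine depth-$4$ obstruction causing two irreducible expressions of the same fraction.

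First I would verify that $\IM{P_\nn}$ is a gcd-monoid, which should follow from a general criterion for interval monoids of finite posets established earlier in the paper; only a short combinatorial check on $P_\nn$ would remain. Second, to refute $\nn$-semi-convergence, I would exhibit two explicit depth-$\nn$ multifractions $\aa_1 / \aa_2 / \cdots / \aa_\nn$ and $\bb_1 / \bb_2 / \cdots / \bb_\nn$, each consisting of interval generators read off the zigzag in $P_\nn$, prove that both are irreducible by inspecting the gcd-structure of $\IM{P_\nn}$, and check that they are $\simeqb$-equivalent by a direct group-theoretic computation.

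The main work lies in proving $\pp$-semi-convergence for every even $\pp < \nn$. The strategy is to show that any rewriting branch created during reduction of a depth-$\pp$ multifraction corresponds to a sub-zigzag of $P_\nn$ of length at most $\pp$, and that, by construction, every such short sub-zigzag admits a common completion. Concretely, I would track for each reduction step which intervals in $P_\nn$ play the role of pivots, and argue that if two reductions of the same depth-$\pp$ multifraction land on distinct irreducibles, then one can extract a closed circuit in $P_\nn$ of length $\le \pp < \nn$, contradicting the design of the poset. An induction on $\pp$ seems natural, using $(\pp-2)$-semi-convergence to handle the ``interior'' of the multifraction and then analysing only the outermost pair of entries, where the new reduction steps occur.

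The main obstacle will be this third step: converting the purely combinatorial prescription on $P_\nn$ (``no obstruction-zigzag of length $< \nn$'') into the monoid-theoretic assertion of $\pp$-semi-convergence. Unlike the single explicit computation that suffices to break $\nn$-semi-convergence, the positive part must cover \emph{all} depth-$\pp$ multifractions and all reduction sequences starting from them. I expect the delicate point to be a careful bookkeeping of how zigzag circuits propagate under the reduction moves of~\cite{Dit, Diu}, in particular how a potential circuit produced in the enveloping group can always be pushed back, for $\pp < \nn$, to a circuit already existing in $P_\nn$; once this translation is clean, the choice of $P_\nn$ does the rest.
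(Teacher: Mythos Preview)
Your overall architecture---build a finite poset whose interval monoid carries exactly one ``long'' obstruction---matches the paper's, but two genuine gaps would prevent the argument from going through.

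First, your test for the \emph{failure} of $\nn$-semi-convergence is not the right one. Exhibiting two $\simeqb$-equivalent irreducible depth-$\nn$ multifractions refutes \emph{convergence}, not semi-convergence: semi-convergence only asserts that every \emph{unital} multifraction (one representing~$1$ in the group) reduces to~$\one$, and two irreducible representatives of a non-identity element do not contradict that. The equivalence you are implicitly relying on (non-unique irreducible fraction $\Leftrightarrow$ failure of $4$-semi-convergence) is the special characterization of Proposition~\ref{P:WC}\ITEM2 and is stated only for $\nn = 4$; it does not extend to arbitrary~$\nn$ in the form you use. The paper instead exhibits a single depth-$\nn$ multifraction, read off the outer boundary zigzag of~$\PPCC\nn$, that is simultaneously \emph{unital} and \emph{irreducible}, which contradicts $\nn$-semi-convergence directly.

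Second, your plan for establishing $\pp$-semi-convergence for $\pp < \nn$ rests on the same conflation: you propose to show that ``two reductions of the same depth-$\pp$ multifraction cannot land on distinct irreducibles,'' which is again confluence. What must be shown is that every unital $\pp$-multifraction is trivial or reducible. The paper does not attack this by tracking reduction sequences or by induction on~$\pp$; instead it uses the framework of Section~\ref{S:NCInt} (Propositions~\ref{P:Base} and~\ref{P:PosBase}, Corollary~\ref{C:PosNCn}) to reduce the question to a purely combinatorial one on the poset: every \emph{simple closed zigzag} of length at most~$\pp$ in~$\PPCC\nn$ must be reducible in the sense of Definition~\ref{D:PosRed}. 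This turns the positive part into a finite case analysis on short closed paths in the Hasse diagram, showing each contains a U-turn or passes through the central vertex~$\yy$ in a way that forces reducibility. The bookkeeping difficulties you anticipate in your last paragraph arise precisely because you are trying to work at the level of arbitrary multifractions and reduction sequences rather than passing through the simple/zigzag reduction, which is what makes the verification finite and tractable.
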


In order to prove the above results, we appeal to \emph{interval monoids} of (finite) posets (\ie, partially ordered sets), a particular case of the general family of category monoids (\ie, universal monoids of categories) investigated in~\cite{Weh}. In this approach, to every poset~$P$ we associate its interval monoid~$\IM\PP$. These monoids prove to be very convenient for investigating the current questions, as a number of properties of~$\IM\PP$ and the derived reduction system boil down to combinatorial features involving~$\PP$, typically the circuits in its Hasse diagram. The point for establishing the above results is then to construct posets with specified properties. 

One of the consequences of the current results, in particular the negative result of Proposition~C, is that (as could be expected) a possible proof of semi-convergence of reduction for all Artin-Tits monoids will require specific ingredients. However, as explained at the end of the paper, the method used to prove Proposition~A may suggest some approaches.

The organization of the paper, which is essentially self-contained, is as follows. After some prerequisites about gcd-monoids and the associated reduction system in Section~\ref{S:Red}, we describe interval monoids~$\IM\PP$ in Section~\ref{S:OrdMon}. In Section~\ref{S:Embed}, we consider the specific case of $2$-semi-convergence, that is, the embeddability of the monoid into its enveloping group. Next, we develop in Section~\ref{S:NCInt} a general method for possibly establishing semi-convergence by restricting to particular multifractions and study this method in the specific case of interval monoids, establishing a simple homotopical criterion involving the ground poset. Positive and negative examples are then described in Section~\ref{S:Ex}. Finally, we briefly discuss in Section~\ref{S:Ext} the possibility of extending the approach to gcd-monoids that are not interval monoids, in particular Artin-Tits monoids.

%%%%
\section{Multifraction reduction}\label{S:Red}

Here we recall some background about the enveloping group of a monoid, about gcd-monoids, and about the reduction system that is our main subject of investigation. Most proofs are omitted and can be found in~\cite{Dit, Diu}.

%%%%
\subsection{Multifractions}\label{SS:Multifrac}

We denote by~$\EG\MM$ the \emph{enveloping group} (called \emph{universal group} in~\cite{Weh}) of a monoid~$\MM$, which is the (unique) group with the universal property that every morphism from~$\MM$ to a group uniquely factors through~$\EG\MM$. The elements of~$\EG\MM$ can be represented using finite sequences of elements of~$\MM$, here called mutifractions. 

\begin{defi}\label{D:Multifrac}
Let~$\MM$ be a monoid, and let $\INV\MM=\{\INV{x}\mid x\in\MM\}$ be a disjoint copy of~$\MM$. For $\nn \ge 1$, an \emph{$\nn$-multifraction} on~$\MM$ is a finite sequence $(\aa_1 \wdots \aa_\nn)$ with entries alternatively in~$\MM$ and~$\INV\MM$ (starting with either). The set of all multifractions completed with the empty sequence~$\ef$ is denoted by~$\FRb\MM$. Multifractions are multiplied using the rule
\begin{equation*}\label{E:SignedProd}
(\aa_1 \wdots \aa_\nn) \opp (\bb_1 \wdots \bb_\pp) = 
\begin{cases}
(\aa_1 \wdots \aa_{\nn-1}, \aa_\nn \bb_1, \bb_2 \wdots \bb_\pp)
&\quad\text{for $\aa_\nn$ and $\bb_1$ in~$\MM$,}\\
(\aa_1 \wdots \aa_{\nn-1}, \bb_1\aa_\nn, \bb_2 \wdots \bb_\pp)
&\quad\text{for $\aa_\nn$ and $\bb_1$ in~$\INV\MM$,}\\
(\aa_1 \wdots \aa_\nn , \bb_1 \wdots \bb_\pp)
&\quad\text{otherwise,}\\
\end{cases}
\end{equation*}
extended with $\aav \opp \ef = \ef \opp \aav = \aav$ for every~$\aav$\end{defi}

We use $\aav$, $\bbv$, \dots, as generic symbols for multifractions, and denote by~$\aa_\ii$ the $\ii$th entry of~$\aav$ counted from~$1$.
The number of entries in a multifraction~$\aav$ is called its \emph{depth}, written~$\dh\aav$. For every~$\aa$ in~$\MM$, we identify~$\aa$ with the $1$-multifraction~$(\aa)$. The following convention is helpful to view multifractions as an iteration of usual fractions:

\begin{nota}\label{N:Frac}
For $\aa_1 \wdots \aa_\nn$ in~$\MM$, we put
\begin{equation}\label{E:Frac}
\aa_1 \sdots \aa_\nn:= (\aa_1, \INV{\aa_2}, \aa_3, \INV{\aa_4}, \dots) \quand 
/ \aa_1 \sdots \aa_\nn:= (\INV{\aa_1}, \aa_2, \INV{\aa_3}, \aa_4, \dots);
\end{equation}
We say that $\ii$ is \emph{positive} (\resp \emph{negative}) \emph{in}~$\aav$ if $\aa_\ii$ (\resp $\INV{\aa_\ii}$) occurs in~\eqref{E:Frac}.
\end{nota}

The following is then easy:

\begin{prop}\cite[Prop.\,2.5]{Diu}\label{P:EnvGroup}
\ITEM1 The set~$\FRb\MM$ equipped with~$\opp$ is a monoid generated by $\MM \cup \INV\MM$.

\ITEM2 Let $\simeqb$ be the congruence on~$\FRb\MM$ generated by $(1, \ef)$ and the pairs $(\aa / \aa, \ef)$ and $(/ \aa / \aa, \ef)$ with~$\aa$ in~$\MM$, and, for~$\aav$ in~$\FRb\MM$, let~$\can(\aav)$ be the $\simeqb$-class of~$\aav$. Then the group~$\EG\MM$ is \pup{isomorphic to} $\FRb\MM{/}{\simeqb}$ and, for every~$\aav$, we have
\begin{equation}\label{E:Eval}
\can(\aav) = \can(\aa_1) \, \can(\aa_2)\inv \, \can(\aa_3) \, \can(\aa_4)\inv \pdots. 
\end{equation}
\end{prop}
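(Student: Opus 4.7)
My plan is to verify the two assertions separately; both follow from a routine inspection of the definitions, with associativity of $\opp$ being the main source of casework.

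For part~\ITEM1, I would begin by noting that the conventions $\aav \opp \ef = \ef \opp \aav = \aav$ give a two-sided identity. To prove associativity of $\opp$, I would compare $(\aav \opp \bbv) \opp \ccv$ with $\aav \opp (\bbv \opp \ccv)$ by cases on the boundary entries. When $\dh\bbv \ge 2$ the two junctions are independent and each is handled by one of the three branches of the product rule. The only nontrivial situation is $\dh\bbv = 1$, when the two junctions share the lone entry of $\bbv$: here the possibilities reduce either to the associativity of $\MM$ (when the three relevant entries all lie on the same side of $\MM$/$\INV\MM$) or to pure concatenation (when at least one of the junctions falls in case~$3$). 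The generation claim then follows at once: consecutive entries of a multifraction lie on opposite sides, so case~$3$ of the product rule yields $(\aa_1 \wdots \aa_\nn) = \aa_1 \opp \aa_2 \opp \cdots \opp \aa_\nn$ when each entry is viewed as a $1$-multifraction.

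For part~\ITEM2, I would verify that $\FRb\MM/\simeqb$ satisfies the universal property defining $\EG\MM$. First, the quotient is a group: the generating pairs $(\aa/\aa, \ef)$ and $(/\aa/\aa, \ef)$ force $\can(\aa)\can(\INV\aa) = \can(\INV\aa)\can(\aa) = \can(\ef)$, so every generator in $\MM \cup \INV\MM$ is invertible, and the generation statement of~\ITEM1 extends invertibility to every class. The relation $(1, \ef)$ together with case~$1$ of the product rule makes $\eta : \aa \mapsto \can(\aa)$ a monoid homomorphism $\MM \to \FRb\MM/\simeqb$. To confirm the universal property, given any monoid homomorphism $\phi : \MM \to G$ into a group, I would extend $\phi$ to $\tilde\phi : \FRb\MM \to G$ by declaring $\tilde\phi(\INV\aa) := \phi(\aa)\inv$ and $\tilde\phi(\aa_1 \wdots \aa_\nn) := \tilde\phi(\aa_1)\cdots\tilde\phi(\aa_\nn)$; a short check on the three branches of $\opp$ shows $\tilde\phi$ respects $\opp$, and each generating pair of $\simeqb$ is sent to $1_G$, so $\tilde\phi$ descends to the unique group morphism $\FRb\MM/\simeqb \to G$ extending $\phi$.

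Finally, the evaluation formula~\eqref{E:Eval} drops out. Under Notation~\ref{N:Frac}, the entries alternate as $(\aa_1, \INV{\aa_2}, \aa_3, \INV{\aa_4}, \dots)$ with each $\aa_\ii \in \MM$, and the factorization $\aav = \aa_1 \opp \INV{\aa_2} \opp \aa_3 \opp \cdots$ furnished by~\ITEM1, combined with the equality $\can(\INV\aa) = \can(\aa)\inv$ already established, yields exactly $\can(\aav) = \can(\aa_1)\can(\aa_2)\inv\can(\aa_3)\can(\aa_4)\inv\cdots$. The main obstacle is the routine but slightly tedious case analysis for associativity of $\opp$; everything else reduces to standard applications of universal properties.
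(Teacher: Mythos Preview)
The paper does not supply its own proof of this proposition: it is introduced with the phrase ``The following is then easy'' and attributed to~\cite[Prop.\,2.5]{Diu}. Your argument is correct and is exactly the routine verification one would expect---associativity of~$\opp$ by case analysis on the boundary signs (with the only delicate case being $\dh\bbv = 1$, as you note), generation by the depth-one multifractions, and the universal property for~\ITEM2 via the evident extension of any monoid morphism $\phi : \MM \to G$ to all of~$\FRb\MM$.
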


Hereafter, we identify $\EG\MM$ with $\FRb\MM{/}{\simeqb}$. A multifraction is called \emph{positive} if its first entry is positive, \emph{trivial} if all its entries are trivial (\ie, equal to~$1$), and \emph{unital} if it represents~$1$ in the group~$\EG\MM$. A trivial multifraction is unital, but, of course, the converse is not true.

%%%%%
\subsection{Gcd-monoids}\label{SS:Gcd}

The reduction process we shall consider makes sense when the ground monoid is a gcd-monoid. Here we recall the basic definitions, referring to~\cite{Dit} for more details.

If $\MM$ is a monoid, then, for~$\aa, \bb$ in~$\MM$, we say that $\aa$ \emph{left divides}~$\bb$ or, equivalently, that $\bb$ is a \emph{right multiple} of~$\aa$, and write~$\aa \dive \bb$, if $\aa\xx = \bb$ holds for some~$\xx$ in~$\MM$. If $\MM$ is cancellative and $1$ is the only invertible element in~$\MM$, the left divisibility relation is a partial ordering on~$\MM$. In this case, the greatest common lower bound of two elements~$\aa, \bb$ with respect to~$\dive$, if it exists, is called their \emph{left gcd}, denoted by~$\aa \gcd \bb$, whereas their least common upper bound, if it exists, is called their \emph{right lcm}, denoted by~$\aa \lcm \bb$. 

Symmetrically, we say that $\aa$ \emph{right divides}~$\bb$ or, equivalently, that $\bb$ is a \emph{left multiple} of~$\aa$, written~$\aa \divet \bb$, if $\xx\aa = \bb$ holds for some~$\xx$. Under the same hypotheses, $\divet$ is a partial ordering on~$\MM$, with the derived right gcd~$\gcdt$ and left lcm~$\lcmt$.

\begin{defi}\label{D:GcdMon}
We say that $\MM$ is a \emph{gcd-monoid} if $\MM$ is a cancellative monoid, $1$ is the only invertible element in~$\MM$, and any two elements of~$\MM$ admit a left and a right gcd.
\end{defi}

Typical examples are Artin-Tits monoids, that is, the monoids $\MON\SS\RR$, where~$\RR$ contains at most one relation $\ss\pdots = \tt\pdots$ 
for each pair of generators~$\ss, \tt$ in~$\SS$ and, if so, it has the form $\ss \tt \ss \tt\pdots = \tt \ss \tt \ss\pdots$, both sides of the same length~\cite{BrS, Dlg}.

Standard arguments (see for instance~\cite[Lemma~2.13]{Dit}) show that, if $\MM$ is a gcd-monoid, then $\MM$ admits conditional right and left lcms, that is, any two elements of~$\MM$ that admit a common right multiple (\resp left multiple) admit a right lcm (\resp a left lcm) and that, conversely, every cancellative monoid with no nontrivial invertible element that admits conditional left and right lcms is a gcd-monoid.

A cancellative monoid~$\MM$ with no nontrivial invertible element is called \emph{noetherian} if the proper left and right divisibility relations~$\div, \divt$ are both well-founded, that is, they admit no infinite descending sequence. Note that every presented monoid~$\MON\SS\RR$ where $\RR$ consists of homogeneous relations, that is, of relations $\uu = \vv$ with~$\uu$, $\vv$ of the same length, is noetherian, and even \emph{strongly noetherian}, meaning that there exists a map~$\lambda$ from~$\MM$ to nonnegative integers satisfying $\lambda(\aa\bb) \ge \lambda(\aa) + \lambda(\bb)$ for all~$\aa, \bb$ in~$\MM$ and $\lambda(\aa) > 0$ for $\aa \not= 1$ \cite[Sec.\,II.2.4]{Dir}.

Noetherianity implies the existence of \emph{atoms}, namely elements that cannot be expressed as the product of two non-invertible elements. A noetherian cancellative monoid with no nontrivial invertible element is always generated by its atoms~\cite[Lemma~2.29]{Dit}.

%%%%%
\subsection{Reduction of multifractions}\label{SS:Reduction}

Owing to Proposition~\ref{P:EnvGroup}, studying the enveloping group~$\EG\MM$ of a monoid~$\MM$ amounts to understanding the congruence~$\simeqb$ on the multifraction monoid~$\FRb\MM$. To do this, we introduced in~\cite{Dit, Diu} a family of rewrite rules~$\Red{\ii, \xx}$ acting on multifractions. 

\begin{defi}\label{D:Redpm}
Assume that $\MM$ is a gcd-monoid. For $\aav$, $\bbv$ in~$\FRb\MM$, and for $\ii \ge 1$ and $\xx \in \MM$, we declare $\bbv = \aav \act \Red{\ii, \xx}$ if we have $\dh\bbv = \dh\aav$, $\bb_\kk = \aa_\kk$ for $\kk \not= \ii - 1, \ii, \ii + 1$, and there exists~$\xx'$ (necessarily unique) satisfying
$$\begin{array}{lccc}
\text{for $\ii\ge 2$ positive in~$\aav$:\quad}
&\bb_{\ii-1} = \xx' \aa_{\ii-1}, 
&\bb_\ii \xx = \xx' \aa_\ii = \xx \lcmt \aa_\ii, 
&\bb_{\ii+1} \xx = \aa_{\ii+1},\\
\text{for $\ii \ge 2$ negative in~$\aav$:}
&\smash{\bb_{\ii-1} = \aa_{\ii-1} \xx'}, 
&\smash{\xx \bb_\ii = \aa_\ii \xx' = \xx \lcm \aa_\ii}, 
&\xx \bb_{\ii+1} = \aa_{\ii+1},\\
\text{for $\ii = 1$ positive in~$\aav$:}
&&\bb_\ii \xx = \aa_\ii, 
&\bb_{\ii+1} \xx = \aa_{\ii+1},\\
\text{for $\ii = 1$ negative in~$\aav$:}
&&\xx \bb_\ii = \aa_\ii, 
&\xx \bb_{\ii+1} = \aa_{\ii+1}.
\end{array}$$
We write $\aav \rd \bbv$ if $\aav \act \Red{\ii, \xx}$ holds for some~$\ii$ and some $\xx \not= 1$, and use $\rds$ for the reflexive--transitive closure of~$\rd$. The family of all rules~$\Red{\ii, \xx}$ is called \emph{reduction} (for the monoid~$\MM$). 
\end{defi}

A multifraction~$\aav$ is called \emph{reducible} if at least one rule~$\Red{\ii, \xx}$ with $\xx \not= 1$ applies to~$\aav$, and irreducible otherwise.

Reduction as defined above extends free reduction (deletion of factors~$\xx\inv \xx$ or~$\xx \xx\inv$): applying~$\Red{\ii, \xx}$ to a multifraction~$\aav$ consists in
pushing the factor~$\xx$ from the $(\ii + 1)$st level to the $(\ii - 1)$st level, using the lcm operation to cross the entry~$\aa_\ii$. This is illustrated in Figure~\ref{F:Red}, where the arrows correspond to the elements of the monoid (as if they were morphisms of a category), with concatenation corresponding to multiplication and squares to equalities.

\begin{figure}[htb]
\begin{picture}(105,17)(0,-2)
\psset{nodesep=0.7mm}
\put(-7,6){$...$}
\psline[style=back,linecolor=color2]{c-c}(0,6)(15,0)(30,0)(45,6)
\psline[style=back,linecolor=color1]{c-c}(0,6)(15,12)(30,12)(45,6)
\pcline{->}(0,6)(15,12)\taput{$\aa_{\ii - 1}$}
\pcline{<-}(15,12)(30,12)\taput{$\aa_\ii$}
\pcline{->}(30,12)(45,6)\taput{$\aa_{\ii + 1}$}
\pcline{->}(0,6)(15,0)\tbput{$\bb_{\ii - 1}$}
\pcline{<-}(15,0)(30,0)\tbput{$\bb_\ii$}
\pcline{->}(30,0)(45,6)\tbput{$\bb_{\ii + 1}$}
\pcline[linewidth=1.5pt,linecolor=color3,arrowsize=1.5mm]{->}(30,12)(30,0)\trput{$\xx$}
\pcline{->}(15,12)(15,0)\tlput{$\xx'$}
\psarc[style=thin](15,0){3}{0}{90}
\put(21,5){$\Leftarrow$}
\put(51,6){$...$}
\psline[style=back,linecolor=color2]{c-c}(60,6)(75,0)(90,0)(105,6)
\psline[style=back,linecolor=color1]{c-c}(60,6)(75,12)(90,12)(105,6)
\pcline{<-}(60,6)(75,12)\taput{$\aa_{\ii - 1}$}
\pcline{->}(75,12)(90,12)\taput{$\aa_\ii$}
\pcline{<-}(90,12)(105,6)\taput{$\aa_{\ii + 1}$}
\pcline{<-}(60,6)(75,0)\tbput{$\bb_{\ii - 1}$}
\pcline{->}(75,0)(90,0)\tbput{$\bb_\ii$}
\pcline{<-}(90,0)(105,6)\tbput{$\bb_{\ii + 1}$}
\pcline[linewidth=1.5pt,linecolor=color3,arrowsize=1.5mm]{<-}(90,12)(90,0)\trput{$\xx$}
\pcline{<-}(75,12)(75,0)\tlput{$\xx'$}
\psarc[style=thin](75,0){3}{0}{90}
\put(81,5){$\Leftarrow$}
\put(109,6){$...$}
\end{picture}
\caption{\sf The reduction rule~$\Red{\ii, \xx}$: starting from~$\aav$ (the grey path), we extract $\xx$ from~$\aa_{\ii + 1}$, push it through~$\aa_\ii$ by taking the lcm of~$\xx$ and~$\aa_\ii$ (indicated by the small curved arc), and incorporate the remainder~$\xx'$ in~$\aa_{\ii - 1}$ to obtain~$\bbv = \aav \act \Red{\ii, \xx}$ (the colored path). The left hand side diagram corresponds to the case of $\ii$ negative in~$\aav$, the right hand side one to $\ii$ positive in~$\aav$, with opposite orientations of the arrows.}
\label{F:Red}
\end{figure}

\begin{exam}\label{X:Red}
If $\MM$ is a free commutative monoid, then every sequence of reductions starting from an arbitrary multifraction~$\aav$ leads in finitely many steps to an irreducible multifraction, namely one of the form $\bb_1 / \bb_2 / 1 \sdots 1$ where~$\bb_1$ and~$\bb_2$ share no letter.
\end{exam}

The following result gathers the basic properties of reduction needed for the current paper. We refer to~\cite{Dit, Diu} for the proofs. We use $\One\pp$ for $1 \sdots 1$, $\pp$~terms, abbreviated in~$\one$ in case~$\pp$ is not needed.

\begin{lemm}\cite{Dit, Diu}\label{L:Basic}
\ITEM1 For every gcd-monoid~$\MM$, the relation $\rds$ is included in~$\simeqb$ and it is compatible with multiplication on~$\FRb\MM$.

\ITEM2 If, moreover, $\MM$ is noetherian, then reduction is terminating for~$\MM$: every sequence of reductions leads in finitely many steps to an irreducible multifraction.
\end{lemm}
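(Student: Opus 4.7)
My plan for part~\ITEM1 is a semantic verification followed by a local check of compatibility. For a single reduction step $\aav \rd \bbv$ via~$\Red{\ii,\xx}$, I rewrite the defining equations (in the representative case $\ii\ge 2$ positive in~$\aav$: $\bb_{\ii-1}=\xx'\aa_{\ii-1}$, $\bb_\ii\xx=\xx'\aa_\ii$, $\bb_{\ii+1}\xx=\aa_{\ii+1}$) as identities in~$\EG\MM$ and observe that the two local contributions $\aa_{\ii-1}\inv\aa_\ii\aa_{\ii+1}\inv$ and $\bb_{\ii-1}\inv\bb_\ii\bb_{\ii+1}\inv$ agree after the factors~$\xx$ and~$\xx'$ telescope. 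By Proposition~\ref{P:EnvGroup}\ITEM2, this gives $\aav \simeqb \bbv$, and the three remaining cases (boundary $\ii=1$ and negative orientation) are symmetric. Compatibility of~$\rds$ with~$\opp$ then reduces to showing that $\aav \rd \aav'$ implies $\cc\opp\aav \rds \cc\opp\aav'$ and $\aav\opp\cc \rds \aav'\opp\cc$; since $\Red{\ii,\xx}$ affects only three consecutive entries, the same rule with an appropriately shifted index applies to either concatenation, the only delicate point being the boundary case $\ii=1$ where merging with~$\cc$ gives a shifted rule of the form $\Red{\pp,\xx}$ with $\pp\ge 2$ and $\xx'=1$, valid because $\xx \divet \aa_1$ is inherited from the original step.

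For part~\ITEM2, my plan is to produce a well-founded ordering on~$\FRb\MM$ along which $\rd$ strictly descends. Since a reduction step preserves depth and the alternating sign pattern, I work in the set of multifractions of fixed depth~$\nn$ and fixed starting sign. For each position~$\jj$, let~$\preceq_\jj$ denote~$\dive$ if~$\jj$ is positive and~$\divet$ if~$\jj$ is negative, and let~$\prec$ be the lexicographic product of $\preceq_\nn, \preceq_{\nn-1}, \dots, \preceq_1$ read from the last position backward to the first. By the noetherian hypothesis, each~$\preceq_\jj$ is well-founded on~$\MM$, hence so is~$\prec$ as a lexicographic product of finitely many well-founded orderings.

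Now suppose $\aav \rd \bbv$ via~$\Red{\ii,\xx}$ with $\xx\ne 1$. The rule leaves positions $\jj>\ii+1$ untouched, so $\bb_\jj=\aa_\jj$ there. At position~$\ii+1$, the defining equation $\bb_{\ii+1}\xx=\aa_{\ii+1}$ (for~$\ii$ positive) or $\xx\bb_{\ii+1}=\aa_{\ii+1}$ (for~$\ii$ negative), combined with $\xx\ne 1$, cancellativity, and the absence of nontrivial invertible elements in~$\MM$ (which in particular forces $\aa_{\ii+1}\ne 1$), makes $\bb_{\ii+1}$ a \emph{proper} $\preceq_{\ii+1}$-predecessor of~$\aa_{\ii+1}$. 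Hence $\bbv\prec\aav$, and termination follows at once from the well-foundedness of~$\prec$.

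The main obstacle is really the bookkeeping in part~\ITEM1: packaging the four branches of the reduction rule (internal vs.\ boundary index, positive vs.\ negative orientation) and the possible merging behavior of~$\opp$ uniformly, so that the group-theoretic verification and the compatibility argument go through without case-by-case repetition. Part~\ITEM2, by contrast, is essentially a standard termination argument once the right-to-left lexicographic ordering is identified as the correct well-founded measure.
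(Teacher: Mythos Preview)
The paper itself does not prove this lemma; it explicitly defers to~\cite{Dit, Diu} (``We refer to~\cite{Dit, Diu} for the proofs''). So there is no in-paper argument to compare against, and your outline is the natural one: a direct semantic check for inclusion in~$\simeqb$, a local compatibility check for~$\opp$, and a well-founded lexicographic measure for termination. Part~\ITEM1 is fine as you wrote it.

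In part~\ITEM2 there is a genuine slip: your assignment of~$\dive$ and~$\divet$ to the positions is reversed. For $\ii$ positive in~$\aav$ the defining equation is $\bb_{\ii+1}\xx=\aa_{\ii+1}$, so $\bb_{\ii+1}$ is a proper \emph{left} divisor of~$\aa_{\ii+1}$, i.e., $\bb_{\ii+1}\div\aa_{\ii+1}$; but $\ii+1$ is \emph{negative} in~$\aav$, so your choice $\preceq_{\ii+1}=\divet$ does not witness the decrease. Symmetrically, for $\ii$ negative the equation $\xx\bb_{\ii+1}=\aa_{\ii+1}$ gives $\bb_{\ii+1}\divt\aa_{\ii+1}$ at the \emph{positive} position~$\ii+1$, again mismatching your~$\preceq_{\ii+1}=\dive$. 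The fix is simply to swap the convention: take $\preceq_\jj=\divet$ when~$\jj$ is positive in~$\aav$ and $\preceq_\jj=\dive$ when~$\jj$ is negative. With that correction, positions $\jj>\ii+1$ are unchanged and position~$\ii+1$ strictly decreases in the appropriate divisibility order (using $\xx\ne1$, cancellativity, and triviality of invertibles), so the right-to-left lexicographic product is a well-founded order along which~$\rd$ strictly descends, and termination follows.
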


It is proved in~\cite{Dit} and~\cite{Diu} that reduction is convergent for~$\MM$ (meaning that, for every multifraction~$\aav$, there exists exactly one irreducible multifraction~$\red(\aav)$ to which $\aav$ reduces) if and only if the ground monoid~$\MM$ satisfies the $3$-Ore condition, namely that any three elements of~$\MM$ that pairwise admit common right (\resp left) multiples admit a global common right (\resp left) multiple. In this case, one obtains a full control of the congruence~$\simeqb$ and, from there, of the enveloping group~$\EG\MM$:
two multifractions~$\aav, \bbv$ with $\dh\aav \ge \dh\bbv$ are $\simeqb$-equivalent if and only if $\red(\aav) = \red(\bbv) \opp \one$ holds. Then, under convenient finiteness assumptions ensuring the decidability of the relation~$\rds$, one obtains a solution to the word problem for the group~$\EG\MM$. 

In many cases, for instance in the case of any Artin--Tits monoid that is not of FC~type~\cite{Alt, GoP2}, reduction is not convergent for~$\MM$, and there seems to be little hope to amend it, typically by adding new rules, so as to obtain a convergent system. However, some weak forms of convergence might be satisfied in more cases. If reduction is convergent, then
\begin{equation}\label{E:NC0}
\text{A multifraction~$\aav$ is unital if and only if $\aav \rds \one$ holds,}
\end{equation}
and it is shown in~\cite{Diu} that most of the consequences of the convergence of reduction, in particular the decidability of the word problem, already follow from~\eqref{E:NC0}. Moreover, all known examples contradicting convergence of reduction fail to contradict~\eqref{E:NC0}. This makes~\eqref{E:NC0} worth of investigation.

\begin{defi}\cite{Diu}
If $\MM$ is a gcd-monoid, we say that reduction is \emph{semi-convergent} for~$\MM$ if~\eqref{E:NC0} holds for every multifraction~$\aav$ on~$\MM$.
\end{defi}

This is the property we shall investigate in the rest of this paper. It will be convenient to start from the following slight variant. 

\begin{lemm}\label{L:NC}
If $\MM$ is a noetherian gcd-monoid, then reduction is semi-convergent for~$\MM$ if and only if
\begin{equation}\label{E:NC}
\parbox{110mm}{
Every unital multifraction on~$\MM$ is either trivial or reducible.}
\end{equation}
\end{lemm}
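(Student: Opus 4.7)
The proof is a short argument that essentially combines termination (from noetherianity) with the fact that reduction is contained in $\simeqb$.

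First, I observe that the converse implication in~\eqref{E:NC0} (namely, $\aav \rds \one$ implies $\aav$ unital) is immediate from Lemma~\ref{L:Basic}(1): if $\aav \rds \one$, then $\aav \simeqb \one$, so $\aav$ represents~$1$ in~$\EG\MM$. Thus semi-convergence reduces to the implication ``unital $\Rightarrow \aav \rds \one$'', and the equivalence to be proved becomes
\[
\bigl(\text{every unital $\aav$ satisfies $\aav \rds \one$}\bigr) \iff \bigl(\text{every unital $\aav$ is trivial or reducible}\bigr).
\]

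For the forward direction, suppose semi-convergence holds and let $\aav$ be a unital multifraction. Then $\aav \rds \one$. If $\aav$ is irreducible, then no reduction step applies, so the only multifraction it $\rds$-reduces to is itself; since reduction preserves depth (by Definition~\ref{D:Redpm}, we have $\dh\bbv = \dh\aav$ whenever $\bbv = \aav \act \Red{\ii, \xx}$), the symbol $\one$ in $\aav \rds \one$ denotes $\One{\dh\aav}$, and therefore $\aav = \One{\dh\aav}$, that is, $\aav$ is trivial. Otherwise $\aav$ is reducible, giving the dichotomy of~\eqref{E:NC}.

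For the backward direction, suppose~\eqref{E:NC} holds and let $\aav$ be unital. By Lemma~\ref{L:Basic}(2), reduction is terminating on the noetherian monoid~$\MM$, so we may pick an irreducible~$\bbv$ with $\aav \rds \bbv$. By Lemma~\ref{L:Basic}(1), $\bbv \simeqb \aav$, so $\bbv$ is still unital. The hypothesis~\eqref{E:NC} then forces $\bbv$ to be trivial (the ``reducible'' alternative is excluded), that is, $\bbv = \One{\dh\bbv} = \one$, whence $\aav \rds \one$.

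No real obstacle is expected here: the argument is purely formal, using only (i)~the depth-preservation built into Definition~\ref{D:Redpm}, (ii)~the inclusion $\rds \subseteq \simeqb$ from Lemma~\ref{L:Basic}(1), and (iii)~termination from Lemma~\ref{L:Basic}(2). The role of the noetherianity hypothesis is precisely to ensure that every unital multifraction admits some irreducible $\rds$-descendant, which is what turns the local statement~\eqref{E:NC} into the global statement~\eqref{E:NC0}.
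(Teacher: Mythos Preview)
Your proof is correct and follows essentially the same approach as the paper's own argument: the forward direction uses that $\aav \rds \one$ forces $\aav$ to be trivial or reducible, and the backward direction uses termination (from noetherianity) to reach an irreducible~$\bbv$, which~\eqref{E:NC} then forces to be trivial. You spell out a couple of points (depth preservation, and that $\bbv$ inherits unitality via $\rds \subseteq {\simeqb}$) that the paper leaves implicit, but the structure is identical.
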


\begin{proof}
Assume that $\aav$ is unital, that is, $\aav$ represents~$1$ in~$\EG\MM$. Then \eqref{E:NC0} implies $\aav \rds \one$ so, by definition, $\aav$ is either trivial or reducible, and \eqref{E:NC0} implies~\eqref{E:NC}. For the other direction, the assumption that $\MM$ is noetherian implies that reduction for~$\MM$ is terminating, so there exists~$\bbv$ irreducible satisfying~$\aav \rds \bbv$. Then \eqref{E:NC} implies $\bbv = \one$. So \eqref{E:NC} implies~\eqref{E:NC0}.
\end{proof}

By definition, the reduction rules~$\Red{\ii, \xx}$ preserve the depth of multifractions and, therefore, it makes sense to consider the specialization of reduction to $\nn$-multifractions.

\begin{defi}\label{D:WC}
For a gcd-monoid~$\MM$, we say that reduction is \emph{$\nn$-semi-convergent} for~$\MM$ if~\eqref{E:NC0} holds for every multifraction~$\aav$ with $\dh\aav \le \nn$.
\end{defi}

We prove in~\cite{Diu} that $\nn$-semi-convergence implies~$(\nn + 1)$-semi-convergence for $\nn = 2, 4$, and conjecturally for every even~$\nn$, so we shall only consider even indices. In this way, we obtain an infinite sequence of stronger and stronger approximations. The following results are established in~\cite{Dit}:

\begin{prop}\label{P:WC}
Let~$\MM$ be a gcd-monoid.

\ITEM1 Reduction is $2$-semi-convergent for~$\MM$ if and only if $\MM$ embeds into~$\EG\MM$.

\ITEM2 Reduction is $4$-semi-convergent for~$\MM$ if and only if $\MM$ embeds into~$\EG\MM$ and every right fraction $\aa\bb\inv$ in~$\EG\MM$ admits a unique expression with $\aa \gcdt \bb = 1$.
\end{prop}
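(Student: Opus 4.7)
The plan is to handle the two items separately, in each case exploiting an explicit description of which rules $\Red{\ii,\xx}$ can act on multifractions of depth at most~$\nn$. For part~(1), I would first observe that a depth-$1$ multifraction admits no rule at all, since every $\Red{\ii,\xx}$ modifies entries at positions $\ii-1,\ii,\ii+1$ with $\ii+1\le\dh\aav$. Hence $(\aa)\rds\One1$ iff $\aa=1$, so the depth-$1$ case of $2$-semi-convergence is exactly the implication $\can(\aa)=1 \Rightarrow \aa=1$. For a positive depth-$2$ multifraction $\aa/\bb$ only $\Red{1,\xx}$ applies, with $\xx$ ranging over common right divisors of $\aa$ and~$\bb$; iterated application reaches a right-coprime irreducible pair, so $\aa/\bb \rds 1/1$ iff $\aa=\bb$. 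Semi-convergence at depth~$2$ thus becomes $\can(\aa)=\can(\bb)\Rightarrow \aa=\bb$, and together with the depth-$1$ condition is exactly the injectivity of $\can\colon\MM\to\EG\MM$; the negative case is symmetric via common left divisors, and the converse direction supplies the required reductions directly.

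For part~(2), $4$-semi-convergence implies $2$-semi-convergence (noted just above the proposition), so $\MM\hookrightarrow\EG\MM$ is automatic and only the right-fraction uniqueness condition (RFU) has to be analysed. For the easy direction~$(\Leftarrow)$, I would start from a unital positive $4$-multifraction $\aav = \aa_1/\aa_2/\aa_3/\aa_4$, translate unitality into $\aa_1\aa_2\inv = \aa_4\aa_3\inv$ in~$\EG\MM$, and extract the right gcds $\uu = \aa_1\gcdt \aa_2$ and $\vv=\aa_4\gcdt \aa_3$. Writing $\aa_1=\cc\uu$, $\aa_2=\dd\uu$, $\aa_4=\cc'\vv$, $\aa_3=\dd'\vv$ with $\cc\gcdt \dd=\cc'\gcdt \dd'=1$, RFU applied to $\cc\dd\inv=\cc'(\dd')\inv$ forces $\cc=\cc'$ and $\dd=\dd'$. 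Then the explicit sequence
\[
\aav \;\rd\; (\cc,\dd,\dd\vv,\cc\vv) \;\rd\; (\cc,\dd,\dd,\cc) \;\rd\; (\cc,1,1,\cc) \;\rd\; (\cc,\cc,1,1) \;\rd\; \One4,
\]
obtained by applying $\Red{1,\uu}$, then $\Red{3,\vv}$, then $\Red{2,\dd}$, then $\Red{3,\cc}$, then $\Red{1,\cc}$ (skipping any step whose parameter is~$1$), witnesses $\aav\rds\One4$; it uses only the elementary identities $\vv\lcmt(\dd\vv)=\dd\vv$, $\dd\lcm \dd=\dd$, and $\cc\lcmt 1=\cc$. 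A depth-$3$ unital multifraction $\aa_1/\aa_2/\aa_3$ yields $\aa_2=\aa_3\aa_1$ in~$\MM$ and reduces via $\Red{2,\aa_3}$ then $\Red{1,\aa_1}$; the negative cases are dual.

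The genuinely subtle direction is~$(\Rightarrow)$. Starting from two coprime representations $(\aa,\bb)$ and $(\cc,\dd)$ of the same right fraction, I would form the $4$-multifraction $\aav=\aa/\bb/\dd/\cc$, check that it is unital, and invoke $4$-semi-convergence to obtain $\aav\rds\One4$. The coprimeness $\aa\gcdt \bb=1$ immediately rules out $\Red{1,\cdot}$, so the first step must be a $\Red{2,\xx}$ with $\xx$ left-dividing~$\dd$ (and $\xx\lcm \bb$ existing) or a $\Red{3,\xx}$ with $\xx$ right-dividing~$\cc$ (and $\xx\lcmt \dd$ existing). The main obstacle is then to argue, by induction on the length of the witness reduction sequence, that the reduction can only terminate at~$\One4$ by collapsing $(\aa,\bb)$ and $(\cc,\dd)$ onto one another. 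Concretely, one tracks how each admissible $\Red{2,\cdot}$ or $\Red{3,\cdot}$ step transforms the coprimeness structure of the two halves of~$\aav$, exploiting the duality enforced by the lcm conditions; this should force the parameters at each step to act symmetrically across the middle of~$\aav$, so that by the time $\One4$ is reached one has $\aa=\cc$ and $\bb=\dd$.
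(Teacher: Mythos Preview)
The paper does not prove Proposition~\ref{P:WC}: the sentence immediately preceding it reads ``The following results are established in~\cite{Dit}'', and no argument is given here. So there is no in-paper proof to compare with, and your attempt must be judged on its own.

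Your treatment of~\ITEM1 is correct. The direction~$(\Leftarrow)$ of~\ITEM2 is also correct: the explicit five-step reduction you display checks against Definition~\ref{D:Redpm}, and depth~$\le 3$ is handled as you say. One small comment: for a \emph{negative} unital $4$-multifraction $/\aa_1/\aa_2/\aa_3/\aa_4$, unitality gives $\aa_2\aa_3^{-1}=\aa_1\aa_4^{-1}$, which is again an equality of \emph{right} fractions, so the same hypothesis~(RFU) applies and an analogous reduction works --- this is not literally ``dual'' to the positive case, but the conclusion stands.

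The genuine gap is the direction~$(\Rightarrow)$ of~\ITEM2. Your plan is to follow a witness sequence $\aa/\bb/\dd/\cc\rds\One4$ and show inductively that ``the parameters at each step act symmetrically across the middle''. That claim is false already in the simplest examples. Take $\MM$ the free commutative monoid on~$\alpha,\beta$ and start from the symmetric unital multifraction $\alpha/\beta/\beta/\alpha$. Applying $\Red{3,\alpha}$ is legal (since $\alpha\lcmt\beta=\alpha\beta$ exists) and produces
\[
\alpha/\beta/\beta/\alpha \ \rd\ \alpha/\alpha\beta/\beta/1,
\]
in which the right-coprimeness of the first two entries is lost ($\alpha\gcdt\alpha\beta=\alpha\neq1$) and no symmetry across the middle survives. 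Thus the invariant you propose to track does not persist for even one step, and the induction as sketched cannot begin. Some genuinely new device is needed --- either a reorganisation lemma allowing level-$1$ steps to be postponed, or the characterisation (proved in~\cite{Dit}) of which $4$-multifractions reduce to~$\One4$ in terms of common multiples in~$\MM$. In either case the missing ingredient is substantial, not a matter of filling in routine details.
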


%%%%
\section{Interval monoids}\label{S:OrdMon}

Our examples and counter-examples involve monoids that are obtained in a uniform way from finite posets, and that are special cases of the monoids investigated in~\cite{Weh}. Here we describe those monoids and, in particular, we recall the characterization, obtained in~\cite{Weh}, of the posets~$\PP$ of which the associated monoid is a gcd-monoid.
We also characterize those~$\PP$ of which the associated monoid is noetherian.

%%%%
\subsection{Intervals in a poset}\label{SS:Intervals}

By default, the order of all considered posets is denoted by~$\le$, and $<$ is the associated strict ordering. For $(\PP, \le)$ a poset and $\xx \le \yy$ in~$\PP$, we denote by~$[\xx, \yy]$ the \emph{interval} determined by~$\xx$ and~$\yy$, namely $\{\zz \in \PP \mid \xx \le \zz \le \yy\}$. We then put $\xx := \sr{([\xx, \yy])}$ (the \emph{source}) and $\yy := \tg{([\xx, \yy])}$ (the \emph{target}).
We say that the interval $[\xx,\yy]$ is \emph{proper} if $\xx\neq\yy$.

\begin{defi}\label{E:IntMon}
The \emph{interval monoid}~$\IM\PP$ of a poset~$\PP$ is the monoid defined by generators $[\xx,\yy]$, where $\xx,\yy\in P$ with $\xx\le\yy$, and relations
\begin{equation}\label{Eq:IntMon}
[\xx,\zz]=[\xx,\yy]\cdot[\yy,\zz]\,,\text{ for $\xx\le\yy\le\zz$ in~$P$}, \quad
[\xx,\xx]=1\,,\text{ for $\xx$ in~$P$.}
 \end{equation}
\end{defi}

The following statement gathers some elementary properties that are valid in every interval monoid, in particular the existence of a distinguished decomposition in terms of the generators of~\eqref{Eq:IntMon}. It is contained in Lemma~3.4 and Proposition~7.7 of~\cite{Weh}.

\begin{prop}\label{P:NF}
\ITEM1 For every poset~$\PP$, the monoid~$\IM\PP$ embeds into its enveloping group. It is cancellative, and~$1$ is the only invertible element in~$\IM\PP$.

\ITEM2 Call a sequence $(\II_1 \wdots \II_\pp)$ of proper intervals \emph{normal} if $\tg{\II_\ii} \not= \sr{\II_{\ii + 1}}$ holds for every $\ii < \pp$. Then every nontrivial element of~$\IM\PP$ admits a unique expression as $\II_1 \pdots \II_\pp$ with $(\II_1 \wdots \II_\pp)$ normal.
\end{prop}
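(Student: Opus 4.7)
The plan is to prove both parts simultaneously by constructing an explicit embedding of~$\IM\PP$ into a free group. Let $F_\PP$ denote the free group with one generator~$g_\xx$ for each $\xx \in \PP$, and define a map $\phi\colon \IM\PP \to F_\PP$ on generators by $\phi([\xx,\yy]) = g_\xx g_\yy^{-1}$. The defining relations~\eqref{Eq:IntMon} translate to the telescoping identity $(g_\xx g_\yy^{-1})(g_\yy g_\zz^{-1}) = g_\xx g_\zz^{-1}$ and to $g_\xx g_\xx^{-1} = 1$, so $\phi$ extends to a well-defined monoid homomorphism.

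I would next dispatch~\ITEM2. Existence of a normal decomposition is immediate by iterating the rewritings $[\xx,\xx] \to 1$ and $[\xx,\yy][\yy,\zz] \to [\xx,\zz]$, both of which strictly shorten the underlying generator word. For uniqueness, the key observation is that, for any normal sequence $(\II_1 \wdots \II_\pp)$ with $\II_\ii = [\xx_\ii,\yy_\ii]$, the image
\[
\phi(\II_1 \pdots \II_\pp) \,=\, g_{\xx_1} g_{\yy_1}^{-1} g_{\xx_2} g_{\yy_2}^{-1} \pdots g_{\xx_\pp} g_{\yy_\pp}^{-1}
\]
is already \emph{reduced} in~$F_\PP$: properness of each~$\II_\ii$ gives $\xx_\ii \ne \yy_\ii$, and normality gives $\yy_\ii \ne \xx_{\ii+1}$ at each junction. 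Distinct normal sequences thus map to distinct reduced words of~$F_\PP$, hence represent distinct elements of~$\IM\PP$; this simultaneously proves uniqueness in~\ITEM2 and injectivity of~$\phi$.

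Part~\ITEM1 then follows at one stroke. Because $\phi$ takes values in a group, the universal property factors it through~$\EG{\IM\PP}$, and the injectivity of~$\phi$ forces the canonical map $\IM\PP \to \EG{\IM\PP}$ to be injective; cancellativity of $\IM\PP$ is then inherited from~$F_\PP$. For the final claim, note that if $\aa \in \IM\PP$ has a normal form of positive length~$\pp$, then $\phi(\aa)^{-1} = g_{\yy_\pp} g_{\xx_\pp}^{-1} \pdots g_{\yy_1} g_{\xx_1}^{-1}$ cannot coincide with $\phi(\bb)$ for any $\bb \in \IM\PP$: matching reduced forms would force the leading proper interval of the normal form of~$\bb$ to have source~$\yy_\pp$ and target~$\xx_\pp$, hence $\yy_\pp < \xx_\pp$, contradicting~$\xx_\pp < \yy_\pp$. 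Therefore $\aa$ is not invertible.

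I expect the only genuine subtlety to be uniqueness in~\ITEM2. An alternative route bypassing the free group would be to prove local confluence of the rewriting system directly: the relevant critical pair is $[\aa,\bb][\bb,\cc][\cc,\dd]$, where both admissible reductions converge to $[\aa,\dd]$, after which Newman's lemma closes the argument since termination is obvious. The free-group detour has the merit of delivering~\ITEM1 as an immediate byproduct rather than requiring a separate cancellativity argument.
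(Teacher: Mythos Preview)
Your argument is correct and follows essentially the same route as the paper: define a homomorphism to the free group on~$\PP$, observe that normal sequences map to reduced words, and read off uniqueness of normal form, injectivity of~$\phi$, embeddability, and cancellativity in one stroke. The only cosmetic differences are that the paper uses the convention $\phi([\xx,\yy])=\xx^{-1}\yy$ rather than your $g_\xx g_\yy^{-1}$, and that the paper handles the absence of nontrivial invertibles by a direct poset-cycle argument in~$\IM\PP$ rather than by analysing~$\phi(\aa)^{-1}$; both variants work.
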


\begin{proof}
We begin with the existence of a normal decomposition. Let $\aa$ be a nontrivial element of~$\IM\PP$. By definition, $\aa$ can be decomposed into a nonempty product of proper intervals. Starting from such a decomposition and iteratively replacing any length two subsequence of the form $([\xx, \yy], [\yy, \zz])$ with the corresponding length one sequence $([\xx, \zz])$, one necessarily obtains after finitely many steps a normal sequence which, by construction, is again a decomposition of~$\aa$.

Next, denote by~$\FG{\PP}$ the free group based on~$\PP$. For $\xx < \yy \in \PP$, put 
\begin{equation}\label{E:NF}
\phi([\xx, \yy]) = \xx\inv \yy.
\end{equation}
Extend~$\phi$ to a morphism~$\phi^*$ from the free monoid on the intervals of~$\PP$ to~$\FG{\PP}$. Then $\phi^*$ is invariant under the relations of~\eqref{Eq:IntMon}, hence it induces a well defined morphism, still denoted by~$\phi$, from~$\IM\PP$ to~$\FG{\PP}$. 

Now assume $\aa = [\xx_1, \yy_1] \pdots [\xx_\pp, \yy_\pp]$ with $([\xx_1, \yy_1] \wdots [\xx_\pp, \yy_\pp])$ normal. Then we find $\phi(\aa) = \xx_1\inv \yy_1 \pdots \xx_\pp\inv \yy_\pp$, a freely reduced word in~$\FG{\PP}$. This first shows that we can recover the normal decomposition of~$\aa$ from~$\phi(\aa)$, which implies the uniqueness of the latter normal decomposition. Next, this proves that the morphism~$\phi$ is injective on~$\IM\PP$, which implies that the monoid~$\IM\PP$ embeds into a (free) group. By the universal property of the enveloping group, this in turn implies that $\IM\PP$ embeds into its enveloping group. From there, it must be cancellative.

Finally, a finite product of proper intervals $[\xx_1, \yy_1] \wdots [\xx_\pp, \yy_\pp]$ with $\pp \ge 1$ may never be~$1$, since this would require $\xx_1 < \yy_1 = \xx_2 < \yy_2 = \pdots = \xx_\pp < \yy_\pp = \xx_1$, contradicting the assumption that $\PP$ is a poset.
\end{proof}

\begin{rema}
 Although the monoid~$\IM\PP$ embeds into a free group, its enveloping group may not be free, even for finite~$P$.
On the other hand, if~$P$ is finite, then the monoid~$\IM\PP$ always embeds into a free monoid; see~\cite{Weh} for more details.
\end{rema}

Hereafter, we denote by~$\NF(\aa)$ the normal decomposition of an element~$\aa$ of~$\IM\PP$, and call its length the \emph{degree} of~$\aa$, denoted by~$\deg(\aa)$.

%%%%
\subsection{Divisibility in~$\IM\PP$}

Via the normal decomposition, the divisibility relations of an interval monoid reduce to the prefix and suffix ordering of intervals, respectively.

\begin{lemm}\label{L:Div}
\ITEM1 If $\PP$ is a poset and $\II, \JJ$ are proper intervals of~$\PP$, then $\II$ left divides~$\JJ$ \pup{\resp right divides} in~$\IM\PP$ if and only if we have $\sr\II = \sr\JJ$ and $\tg\II \le \tg\JJ$ \pup{\resp $\sr\II \ge \sr\JJ$ and $\tg\II = \tg\JJ$} in~$\PP$.

\ITEM2 If $\aa, \bb$ belong to~$\IM\PP$, with $\NF(\aa) = (\II_1 \wdots \II_\pp)$ and $\NF(\bb) = (\JJ_1 \wdots \JJ_\qq)$, then $\aa$ left divides \pup{\resp right divides}~$\bb$ in~$\IM\PP$ if and only if we have $\pp \le \qq$, $\II_\kk = \JJ_\kk$ \pup{\resp $\II_{\pp - \kk + 1} = \JJ_{\qq - \kk + 1}$} for $1 \le \kk < \pp$, and $\II_\pp \le \JJ_\pp$ \pup{\resp $\II_1 \divet \JJ_{\qq - \pp + 1}$}. 
\end{lemm}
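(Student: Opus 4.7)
The plan relies on the uniqueness of the normal decomposition from Proposition~\ref{P:NF}\ITEM2. The key preliminary observation is a junction lemma: if $\NF(\aa)=(\II_1\wdots\II_\pp)$ and $\NF(\cc)=(\KK_1\wdots\KK_\mm)$ with $\mm\ge1$, then the normal decomposition of~$\aa\cc$ is the concatenation $(\II_1\wdots\II_\pp,\KK_1\wdots\KK_\mm)$ when $\tg{\II_\pp}\neq\sr{\KK_1}$, and the ``one-step collapsed'' sequence $(\II_1\wdots\II_{\pp-1},[\sr{\II_\pp},\tg{\KK_1}],\KK_2\wdots\KK_\mm)$ when $\tg{\II_\pp}=\sr{\KK_1}$. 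Indeed, after concatenation the only potentially non-normal junction is between positions~$\pp$ and~$\pp+1$; a single application of $[\xx,\yy]\cdot[\yy,\zz]=[\xx,\zz]$ restores normality, and no new non-normality can arise because the two original sequences are normal.

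With the junction lemma in hand, part~\ITEM1 is the special case $\pp=\qq=1$: the product $\II\cc$ has a normal decomposition of length~$1$ if and only if either $\cc=1$ (giving $\II=\JJ$) or $\mm=1$ with $\tg\II=\sr{\KK_1}$ (giving $\JJ=[\sr\II,\tg{\KK_1}]$). Both subcases yield $\sr\II=\sr\JJ$ and $\tg\II\le\tg\JJ$ in~$\PP$. Conversely, under these conditions, set $\cc=1$ if $\tg\II=\tg\JJ$ and $\cc=[\tg\II,\tg\JJ]$ otherwise, which realizes $\II\cc=\JJ$.

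For the forward direction of part~\ITEM2, write $\bb=\aa\cc$ and let $\NF(\cc)=(\KK_1\wdots\KK_\mm)$. Applying the junction lemma and matching with $\NF(\bb)=(\JJ_1\wdots\JJ_\qq)$ via uniqueness yields exactly two cases: either $\qq=\pp+\mm$ with $\II_\kk=\JJ_\kk$ for $1\le\kk\le\pp$ (so $\II_\pp=\JJ_\pp$), or $\qq=\pp+\mm-1$ with $\II_\kk=\JJ_\kk$ for $1\le\kk<\pp$ and $\JJ_\pp=[\sr{\II_\pp},\tg{\KK_1}]$, in which case $\II_\pp$ properly left divides~$\JJ_\pp$ by part~\ITEM1. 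In both cases the stated conditions hold. For the converse, given the conditions, set $\cc=\JJ_{\pp+1}\pdots\JJ_\qq$ if $\II_\pp=\JJ_\pp$, and $\cc=[\tg{\II_\pp},\tg{\JJ_\pp}]\cdot\JJ_{\pp+1}\pdots\JJ_\qq$ otherwise. In each case the junction lemma applied to $\aa\cdot\cc$ reconstructs $\NF(\bb)$, using $\sr{\II_\pp}=\sr{\JJ_\pp}$ to identify $[\sr{\II_\pp},\tg{\JJ_\pp}]$ with~$\JJ_\pp$ in the collapsed case, and normality of~$\NF(\bb)$ to ensure the new junctions are indeed non-coincident.

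The right-divisibility half is obtained by the symmetric argument, interpreted at the opposite end of the normal decomposition: in the product $\cc\aa=\bb$, the potential collapse now occurs at the junction between~$\cc$ and~$\aa$, which explains why the condition concerns the \emph{first} interval $\II_1$ of~$\aa$ matched against $\JJ_{\qq-\pp+1}$ of~$\bb$, together with the coincidence $\II_{\pp-\kk+1}=\JJ_{\qq-\kk+1}$ for $1\le\kk<\pp$. There is no conceptual obstacle anywhere; the main difficulty is purely organizational, namely cleanly separating the collapse and non-collapse subcases so that the boundary case $\II_\pp=\JJ_\pp$ (\resp $\II_1=\JJ_{\qq-\pp+1}$) is handled without double-counting.
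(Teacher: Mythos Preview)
Your proof is correct and follows essentially the same approach as the paper: both hinge on the junction observation that $\NF(\aa\cc)$ is either the concatenation of $\NF(\aa)$ and $\NF(\cc)$ or that concatenation with a single merge at the boundary. The paper's proof is much terser---it declares~\ITEM1 ``straightforward'' and for~\ITEM2 merely states the junction observation and says ``expanding $\bb=\aa\cc$ gives the result''---whereas you spell out both directions, the case split, and the explicit construction of~$\cc$ in the converse; but the underlying argument is the same.
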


\begin{proof}
The verification of~\ITEM1 is straightforward. For~\ITEM2, we observe that $\NF(\aa \opp \cc)$ either is the concatenation of the two sequences~$\NF(\aa)$ and~$\NF(\cc)$, or it is obtained from this concatenation by merging the last interval of~$\NF(\aa)$ with the first interval of~$\NF(\cc)$, when the latter match. Expanding $\bb = \aa\cc$ gives the result for~$\dive$.
\end{proof}

An interval monoid~$\IM\PP$ need not always be a gcd-monoid, but we show now that some simple conditions on the poset~$\PP$ are sufficient. The result below can be established as a straightforward application of~\cite[Thm.\,5.8]{Weh}; for convenience sake, we give here a simple direct verification.

For $\xx$ in a poset~$\PP$, we put $\SUP\PP\xx:= \{\yy \in \PP \mid \yy \ge \xx\}$ and $\INF\PP\xx:= \{\yy \in \PP \mid \yy \le \xx\}$.

\begin{defi}
A poset~$\PP$ is said to be a \emph{local lattice} if, for every~$\xx$ in~$\PP$, the induced poset~$\SUP\PP\xx$ is a meet-semilattice, and the induced poset~$\INF\PP\xx$ is a join-semilattice.
\end{defi}

We recall that a poset is a \emph{meet-semilattice} (\resp a \emph{join-semilattice}) if any two elements admit a greatest lower bound (\resp a least upper bound). The following result is contained in \cite[Prop.\,7.9]{Weh}.

\begin{prop}\label{P:Gcd}
For every poset~$\PP$, the monoid~$\IM\PP$ is a gcd-monoid if and only if $\PP$ is a local lattice.
\end{prop}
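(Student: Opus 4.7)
The plan is to reduce the existence of left and right gcds in $\IM\PP$ to the existence of meets in each $\SUP\PP\xx$ and joins in each $\INF\PP\xx$, the translation being provided by the normal form (Proposition~\ref{P:NF}) and by the divisibility criterion of Lemma~\ref{L:Div}. Cancellativity and the absence of nontrivial invertibles are already granted by Proposition~\ref{P:NF}(1), so ``gcd-monoid'' really amounts to ``has left and right gcds'' here.

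For the ``only if'' direction, fix $\xx \in \PP$ and $\yy_1,\yy_2 \in \SUP\PP\xx$; I would show that $\yy_1$ and $\yy_2$ admit a meet in $\SUP\PP\xx$. Apply the assumed existence of a left gcd in $\IM\PP$ to $[\xx,\yy_1]$ and $[\xx,\yy_2]$, obtaining some $\cc \in \IM\PP$. Lemma~\ref{L:Div}(2) forces $\cc$ to have degree at most~$1$; by Lemma~\ref{L:Div}(1), the only candidates are $\cc = 1$ or $\cc = [\xx,\zz]$ with $\xx \le \zz \le \yy_1,\yy_2$. For any $\zz' \in \SUP\PP\xx$ with $\zz' \le \yy_1$ and $\zz' \le \yy_2$ (including $\zz' = \xx$, which corresponds to the trivial common divisor), the element $[\xx,\zz']$ left divides both $[\xx,\yy_1]$ and $[\xx,\yy_2]$, hence left divides~$\cc$; reading this back through Lemma~\ref{L:Div} gives $\zz' \le \zz$ (or $\zz' = \xx$ if $\cc = 1$). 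Either way the desired meet exists. The dual argument, using right gcds and the right divisibility half of Lemma~\ref{L:Div}, shows that each $\INF\PP\xx$ is a join-semilattice.

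For the ``if'' direction, take $\aa,\bb \in \IM\PP$ with normal decompositions $(\II_1,\dots,\II_\pp)$ and $(\JJ_1,\dots,\JJ_\qq)$, and let $\kk$ be maximal with $\II_\jj = \JJ_\jj$ for all $\jj \le \kk$. I will exhibit an explicit candidate~$\cc$ for the left gcd and verify both that it is a common left divisor and that every common left divisor divides it, by repeated appeal to Lemma~\ref{L:Div}(2). If $\kk = \min(\pp,\qq)$, take $\cc$ to be the shorter of $\aa,\bb$. Otherwise (after symmetrising we may assume $\kk < \pp$ and $\kk < \qq$), compare $\II_{\kk+1}$ and $\JJ_{\kk+1}$: if their sources differ, set $\cc := \II_1 \pdots \II_\kk$ (any longer common left divisor would, by Lemma~\ref{L:Div}(2), force a $(\kk+1)$st interval in its normal form whose source equals both $\sr{\II_{\kk+1}}$ and $\sr{\JJ_{\kk+1}}$, which is impossible); if the common source is $\xx$ with targets $\yy_1,\yy_2$, use the hypothesis to form $\zz := \yy_1 \gcd \yy_2$ inside $\SUP\PP\xx$ and set $\cc := \II_1 \pdots \II_\kk \opp [\xx,\zz]$, dropping the last factor if $\zz = \xx$. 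The appended sequence is normal because $\tg{\II_\kk} \ne \sr{\II_{\kk+1}} = \xx = \sr{[\xx,\zz]}$, and universality follows by the same argument as in the previous paragraph: any common left divisor of $\aa,\bb$ of degree $> \kk$ must by Lemma~\ref{L:Div}(2) and maximality of~$\kk$ have degree exactly $\kk+1$, with last interval $[\xx,\zz']$ for some $\zz' \le \yy_1,\yy_2$, whence $\zz' \le \zz$ and $[\xx,\zz'] \dive [\xx,\zz]$. The right gcd is produced symmetrically from the join in $\INF\PP\xx$.

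The main difficulty, if any, is purely bookkeeping: keeping the normal form intact (ensuring the appended interval does not collapse with the preceding one) and handling the corner cases $\zz = \xx$ and $\cc = 1$ uniformly, rather than any conceptual obstacle. Once the bookkeeping is clean, the equivalence drops out of Lemma~\ref{L:Div} applied twice.
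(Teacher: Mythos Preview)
Your proposal is correct and follows essentially the same route as the paper's proof: both directions translate between gcds in~$\IM\PP$ and meets/joins in the local posets $\SUP\PP\xx$, $\INF\PP\xx$ via Lemma~\ref{L:Div}, with the explicit left gcd built by locating the first disagreement between the normal forms and then case-splitting on whether the sources there coincide. Your write-up is slightly more explicit about the corner cases (the prefix case $\kk = \min(\pp,\qq)$, the case $\zz = \xx$, and why the appended interval keeps the sequence normal), but the argument is the same.
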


\begin{proof}
Assume that $\PP$ is local lattice, and $\aa, \bb$ are distinct elements of~$\IM\PP$. Let $(\II_1 \wdots \II_\pp)$ and $(\JJ_1 \wdots \JJ_\qq)$ be the normal forms of~$\aa$ and~$\bb$, respectively. Assume $\II_\kk = \JJ_\kk$ for $\kk < \rr$ and $\II_\rr \not= \JJ_\rr$ (such an $\rr$ exists, since $\aa$ and $\bb$ are distinct). For $\sr{\II_\rr} \not= \sr{\JJ_\rr}$, Lemma~\ref{L:Div}\ITEM2 directly implies that $\II_1 \pdots \II_{\rr - 1}$ is a left gcd of~$\aa$ and~$\bb$ in~$\IM\PP$. Otherwise, let $\xx = \sr{\II_\rr} = \sr{\JJ_\rr}$, and $\yy = \tg{\II_\rr}$, $\zz = \tg{\JJ_\rr}$. Then $\yy$ and~$\zz$ lie in~$\SUP\PP\xx$, hence they admit a greatest common lower bound, say~$\tt$. We claim that $\cc = \II_1 \pdots \II_{\rr - 1} \opp [\xx, \tt]$ is a left gcd of~$\aa$ and~$\bb$ in~$\IM\PP$. Indeed, $\tt \le \yy$ in~$\PP$ implies $[\xx, \tt] \dive [\xx, \yy] = \II_\rr$ in~$\IM\PP$, whence
$$\cc = \II_1 \pdots \II_{\rr - 1} \opp [\xx, \tt] \dive \II_1 \pdots \II_\rr \dive \II_1 \pdots \II_\pp = \aa,$$
and, similarly, $\cc \dive \bb$. On the other hand, assume $\dd \dive \aa$ and $\dd \dive \bb$. Let $(\KK_1 \wdots \KK_\ss)$ be the normal form of~$\dd$. By Lemma~\ref{L:Div}\ITEM2, the assumption $\dd \dive \aa$ implies $\ss \le \pp$ and $\KK_1 = \II_1$, \dots, $\KK_{\ss-1} = \II_{\ss-1}$, $\KK_\ss \dive \II_\ss$. In the case $\ss < \rr$, we directly deduce $\dd \dive \cc$. Assume now $\ss = \rr$. Then we have $\KK_\rr \dive \II_\rr = [\xx, \yy]$, which implies $\KK_\rr = [\xx, \uu]$ for some~$\uu \le \yy$. Arguing similarly from the assumption $\dd \dive \bb$, we obtain $\uu \le \zz$. As $\tt$ is the greatest lower bound of~$\yy$ and~$\zz$ in~$\PP$, we deduce $\uu \le \tt$, whence $\KK_\rr \dive [\xx, \tt]$ and, from there, $\dd \dive \cc$ in~$\IM\PP$. Finally, $\ss > \rr$ is impossible, since it would require $\KK_\rr = \II_\rr$ and $\KK_\rr = \JJ_\rr$, whereas $\II_\rr \not= \JJ_\rr$ holds. Hence $\cc$ is a left gcd of~$\aa$ and~$\bb$ in~$\IM\PP$. The argument for right gcds is symmetric, using the assumption that $\INF\PP\xx$ is a join-semilattice. So $\PP$ being a local lattice implies that $\IM\PP$ is a gcd-monoid.

Conversely, assume that $\IM\PP$ is a gcd-monoid, $\xx$ lies in~$\PP$, and $\yy, \zz$ belong to~$\SUP\PP\xx$. Then the elements~$[\xx, \yy]$ and~$[\xx, \zz]$ of~$\IM\PP$ admit a left gcd. As the latter left divides the interval~$[\xx, \yy]$, Lemma~\ref{L:Div} implies that it is an interval, say~$[\xx, \tt]$. We claim that $\tt$ is a greatest lower bound for~$\yy$ and~$\zz$ in~$\SUP\PP\xx$. First $[\xx, \tt] \dive [\xx, \yy]$ in~$\IM\PP$ implies $\tt \le \yy$ in~$\PP$ and, similarly, $\tt \le \zz$. On the other hand, assume $\uu \le \yy$ and $\uu \le \zz$ in~$\PP$. Then, in~$\IM\PP$, we have $[\xx, \uu] \dive [\xx, \yy]$ and $[\xx, \uu] \dive [\xx, \zz]$, whence $[\xx, \uu] \dive [\xx, \tt]$, and, from there, $\uu \le \tt$ in~$\PP$. So $\SUP\PP\xx$ is a meet-semilattice. Arguing symmetrically from right gcds in~$\IM\PP$, we obtain that $\INF\PP\xx$ is a join-semilattice. So $\IM\PP$ being a gcd-monoid implies that $\PP$ is a local lattice. 
\end{proof}

%%%%
\subsection{Noetherianity}

We turn to the possible noetherianity of the monoid~$\IM\PP$. We write $\xx \prec \yy$ when $\yy$ is an \emph{immediate successor} of~$\xx$, that is, $x<y$ holds and no element~$\zz$ satisfies $\xx < \zz < \yy$. 

\begin{prop}\label{P:Noeth}
For every poset~$\PP$, the monoid~$\IM\PP$ is noetherian if and only if for every~$\xx$ in~$\PP$, there is no infinite descending chain in~$\SUP\PP\xx$, and no infinite ascending chain in~$\INF\PP\xx$. Its atoms are then the intervals~$[\xx, \yy]$ with $\xx \prec \yy$ \pup{\emph{elementary} intervals}.
\end{prop}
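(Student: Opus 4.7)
The plan is to reduce both statements to properties of proper intervals, using Lemma~\ref{L:Div} and the uniqueness of normal forms from Proposition~\ref{P:NF}\ITEM2.

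For the noetherianity equivalence, one direction is immediate: an infinite descending chain $\yy_1 > \yy_2 > \cdots$ in some~$\SUP\PP\xx$ produces, by Lemma~\ref{L:Div}\ITEM1, a sequence of proper intervals with $[\xx, \yy_{\kk+1}] \div [\xx, \yy_\kk]$ for every~$\kk$, hence an infinite descending chain for left divisibility in~$\IM\PP$; an analogous argument with~$\INF\PP\xx$ handles right divisibility. For the converse, I start from an infinite descending sequence $(\aa_\kk)_{\kk \ge 1}$ for~$\div$ in~$\IM\PP$. Since divisibility cannot increase degree (by Lemma~\ref{L:Div}\ITEM2), the integers $\deg(\aa_\kk)$ form a non-increasing sequence, which stabilises from some index~$\kk_0$ onwards at a common value~$\pp \ge 1$. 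For $\kk \ge \kk_0$, Lemma~\ref{L:Div}\ITEM2 forces the first $\pp - 1$ entries of~$\NF(\aa_\kk)$ to be constant in~$\kk$ and the last entries $\II_\pp^{(\kk)}$ to satisfy $\II_\pp^{(\kk + 1)} \dive \II_\pp^{(\kk)}$; uniqueness of normal forms then upgrades $\aa_{\kk + 1} \neq \aa_\kk$ into $\II_\pp^{(\kk + 1)} \neq \II_\pp^{(\kk)}$, so these last intervals form a strictly descending chain under left divisibility. Applying Lemma~\ref{L:Div}\ITEM1 to this chain, all the~$\II_\pp^{(\kk)}$ share a common source~$\xx$ and their targets form the desired infinite strictly descending chain in~$\SUP\PP\xx$. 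The right-divisibility case is strictly symmetric and produces an infinite strictly ascending chain in some~$\INF\PP\xx$.

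For the description of atoms, I would examine how normal forms behave under multiplication: the product of two elements of degrees~$\qq$ and~$\rr$ has degree~$\qq + \rr$, except when the target of the last interval of~$\NF(\bb)$ equals the source of the first interval of~$\NF(\cc)$, in which case these two intervals merge and the degree of~$\bb\cc$ is~$\qq + \rr - 1$. An element of degree~$\ge 2$ is then the product of two nontrivial factors read off its own normal form, hence cannot be an atom. A proper interval $\aa = [\xx, \yy]$ can decompose as $\bb\cc$ with both factors nontrivial only through a merge forcing $\qq = \rr = 1$, so of the form $[\xx, \zz] \opp [\zz, \yy]$ with $\xx < \zz < \yy$; such a~$\zz$ exists precisely when $\xx \prec \yy$ fails. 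The atoms are therefore exactly the elementary intervals.

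The step I expect to be the main obstacle is the transfer of strictness in the first part: once the degree has stabilised at~$\pp$, one must argue carefully with Proposition~\ref{P:NF}\ITEM2 that the inequality $\aa_{\kk + 1} \neq \aa_\kk$ must locate itself at the last interval of the normal form rather than anywhere earlier, so that one really obtains a strictly descending chain in~$\SUP\PP\xx$. The rest is routine bookkeeping with Lemma~\ref{L:Div}.
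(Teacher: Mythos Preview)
Your proof is correct and follows essentially the same route as the paper: both reduce the question to proper intervals via Lemma~\ref{L:Div} and the normal form of Proposition~\ref{P:NF}\ITEM2. The only difference is in packaging: the paper observes that left divisibility on~$\IM\PP$ embeds into the lexicographic extension of left divisibility on intervals and then invokes the standard fact that such an extension is well-founded, whereas you unwind that argument by hand---stabilising the degree first and then extracting the strictly descending chain of last intervals. Your version is slightly more explicit (and in particular makes visible the degree bound that is needed for the lexicographic argument to go through), while the paper's version is terser; the underlying idea is the same. For the atoms, the paper simply says the characterization ``follows from the definitions directly''; your degree-merging analysis is exactly what that phrase abbreviates.
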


\begin{proof}
By Lemma~\ref{L:Div}\ITEM1, if $[\xx, \yy]$ and $[\xx, \yy']$ are intervals with the same source, then $[\xx, \yy] < [\xx, \yy']$ holds in~$\IM\PP$ if and only if $\yy < \yy'$ holds in~$\PP$. Hence the non-existence of an infinite descending chain with respect to proper left divisibility inside the family of intervals starting at~$\xx$ is equivalent to the non-existence of an infinite descending chain in~$\SUP\PP\xx$. 

By Lemma~\ref{L:Div}\ITEM2, $\aa \le \bb$ implies that the sequence $\NF(\aa)$ is lexicographically smaller than~$\NF(\bb)$, meaning that it is either a prefix or that there exists~$\ii$ such that the first $\ii - 1$ entries coincide and the $\ii$th entry for~$\aa$ left divides the $\ii$st entry for~$\bb$. By the remark above, the left divisibility order on intervals is well-founded. By standard arguments, this implies that its lexicographical extension is well-founded as well, implying that left divisibility has no infinite descending sequence in~$\IM\PP$. 

The argument for right divisibility is symmetric, a descending sequence in intervals with given target being discarded because it would entail an infinite ascending sequence of the sources of the intervals. Hence the monoid~$\IM\PP$ is noetherian (but not necessarily strongly noetherian when $\PP$ is infinite).

The characterization of atoms follows from the definitions directly.
\end{proof}

\begin{coro}\label{C:FiniteNoeth}
For every finite poset~$\PP$, the monoid~$\IM\PP$ is noetherian.
\end{coro}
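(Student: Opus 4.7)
The statement is an immediate consequence of Proposition~\ref{P:Noeth}, so the plan is essentially to invoke that characterization and observe that the finiteness hypothesis makes its hypotheses vacuous.

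More precisely, I would argue as follows. Let $\PP$ be a finite poset. For any $\xx$ in $\PP$, the sets $\SUP\PP\xx$ and $\INF\PP\xx$ are subsets of $\PP$, hence they are finite. A finite poset cannot contain an infinite chain of any kind, whether ascending or descending, so in particular $\SUP\PP\xx$ has no infinite descending chain and $\INF\PP\xx$ has no infinite ascending chain. By Proposition~\ref{P:Noeth}, this is exactly the condition needed for $\IM\PP$ to be noetherian.

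There is no real obstacle here; the only subtlety worth flagging is that noetherianity of $\IM\PP$ is stronger than the finite poset being noetherian in the usual order-theoretic sense, since it concerns infinite descending sequences in the (generally infinite) monoid $\IM\PP$ for both $\dive$ and $\divet$. However, Proposition~\ref{P:Noeth} has already reduced those conditions to purely order-theoretic properties of $\PP$, and these are automatic when $\PP$ is finite. (One may also remark that $\IM\PP$ fails in general to be strongly noetherian, as noted after Proposition~\ref{P:Noeth}, so the corollary really is about the weak notion of noetherianity; but for the proof this distinction is irrelevant.)
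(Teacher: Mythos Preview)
Your proposal is correct and matches the paper's approach: the corollary is stated in the paper without proof, as an immediate consequence of Proposition~\ref{P:Noeth}, and your argument spells out precisely why the chain conditions of that proposition are vacuously satisfied when~$\PP$ is finite. One small quibble: your parenthetical remark that $\IM\PP$ ``fails in general to be strongly noetherian'' slightly misreads the paper, which makes that caveat only for infinite~$\PP$; but this is tangential and does not affect the proof.
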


\begin{rema}\label{R:Garside}
The intervals in a monoid~$\IM\PP$ form a Garside family, and what is called normal decomposition above is the normal form associated with that Garside family in the sense of~\cite[Ch.\,III]{Dir}. Proposition~\ref{P:Noeth} is then an instance of the general result that a monoid with a locally noetherian Garside family is noetherian. A specificity of the monoids~$\IM\PP$ is that the family of intervals is a bilateral Garside family, meaning a Garside family both with respect to left greedy and to right greedy decompositions.
\end{rema}

%%%%
\section{Embedding into the enveloping group}\label{S:Embed}

We turn to the specific investigation of multifraction reduction in the case of interval monoids. We begin with $2$-semi-convergence, that is, with the embeddability of the monoid into its group. Our aim is to prove Proposition~B, that is, to construct an example of a gcd-monoid that does not embed into its group.

%%%%
\subsection{Malcev conditions}\label{SS:Malcev}

We aim at constructing a gcd-monoid~$\MM$ that does not embed into its group. Note that, by Proposition~\ref{P:NF}, $\MM$ cannot be the interval monoid of a poset.

It is known that a monoid embeds into its group if and only if it is cancellative and satisfies an infinite list of quasi-identities known as \emph{Malcev conditions}~\cite{Malcev39}, see~\cite[Ch.\,12]{ClP2} and~\cite{Bouleau, Oso}. Malcev conditions are encoded in \emph{Malcev words}, which are those words in the letters~$L_i, L_i^*, R_i, R^*_i$, $\ii \ge 1$ that obey some syntactic constraints described in~\cite[p.\,310]{ClP2}. Then one can show that a gcd-monoid satisfies a number of Malcev conditions.

\begin{prop}\label{P:Malcev}
Assume that $\MM$ is a gcd-monoid satisfying all Malcev conditions encoded in Malcev words of length at most~$2\ell$. Then $\MM$ satisfies all Malcev conditions encoded in a Malcev word of length~$\ell + 2$ that contains a factor of the form~$L_i R_j L_i^*$ or $R_i L_j R_i^*$.
\end{prop}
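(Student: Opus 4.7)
The plan is to show that the Malcev condition encoded by a word $\ww$ of length $\ell+2$ containing a factor $L_i R_j L_i^*$ is a formal consequence of the gcd-monoid axioms together with those Malcev conditions encoded by Malcev words of length at most $2\ell$. The case of a factor $R_i L_j R_i^*$ will then follow by applying the argument to the opposite monoid $\MM^{\mathrm{op}}$, which is again a gcd-monoid.

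First I would unpack the encoding of~\cite[p.\,310]{ClP2}. Each letter $L_i$ (\resp $R_i$) introduces a pair of equations in the premise of the quasi-identity that share a common left (\resp right) factor, while the dual letter $L_i^*$ (\resp $R_i^*$) re-uses the same common factor later in the chain to produce the conclusion. Thus a factor of the form $L_i R_j L_i^*$ in $\ww$ translates into a configuration in which two equations asserting that some elements $\aa, \bb$ admit common right multiples are separated by an intermediate pair of equations (associated with $R_j$) asserting that some other pair of elements admits a common left multiple.

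Second I would use the gcd-monoid axiom: since $\aa$ and $\bb$ admit common right multiples (as guaranteed by the $L_i$ letter), they admit a right lcm $\aa\lcm\bb$, through which every common right multiple factors uniquely. This replaces the two equations contributed by $L_i$ and $L_i^*$ by the single assertion that the corresponding variables factor through $\aa\lcm\bb$, reducing the $L_i\cdots L_i^*$ portion of~$\ww$ to a single block. Counting the letters, the resulting Malcev word has length at most $2\ell$, so by hypothesis $\MM$ satisfies the associated Malcev condition; transporting that conclusion back through the lcm factorization yields exactly the conclusion encoded by~$\ww$.

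The main obstacle will be the syntactic bookkeeping: one has to verify that, after collapsing $L_i R_j L_i^*$ by means of the lcm, what remains is still a legitimate Malcev word in the sense of~\cite[p.\,310]{ClP2} (the pairing structure of dual letters and the constraints on indices must be preserved) and that its length is indeed bounded by $2\ell$ rather than, say, $2\ell+1$; this is where the specific form $L_i R_j L_i^*$ (with exactly one letter sandwiched between the matched pair) is used. Once the reduction is made precise, the symmetric case $R_i L_j R_i^*$ is handled by duality, so the proposition will follow.
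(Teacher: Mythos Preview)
Your plan and the paper's argument diverge in approach. The paper does not carry out any reduction to shorter Malcev words: it simply treats the single instance $L_1R_1L_1^*R_1^*$, writes out the associated quasi-identity
\[
(\dd\aa=\AA\CC\ \text{and}\ \dd\bb=\AA\DD\ \text{and}\ \cc\bb=\BB\DD)\ \Longrightarrow\ \cc\aa=\BB\CC,
\]
and verifies it by a direct diagram chase, declaring afterwards that ``the general scheme is similar''. In particular, the hypothesis on Malcev words of length~$\le 2\ell$ is never invoked in the worked case. The entry point is also different from yours: the paper takes the \emph{left gcd} $\ee=\AA\gcd\dd$, cancels it to obtain coprime~$\AA',\dd'$, and only then recognises $\dd'\bb=\AA'\DD$ as a left lcm through which the remaining premise $\cc\bb=\BB\DD$ must factor. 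You instead propose to start from a right lcm of unspecified elements ``$\aa$ and~$\bb$''; this is in the same spirit but is not the same computation.

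Your high-level idea---use the gcd-monoid structure to collapse the $L_iR_jL_i^*$ block and then appeal to a shorter Malcev condition---is plausible, but your account of the encoding is imprecise: a single letter does not ``introduce a pair of equations'' (the length-$4$ word above has only three premises plus one conclusion), and the claim that the $L_i$ letter alone guarantees that two specific elements admit common right multiples is not pinned down. The asserted bound $2\ell$ on the reduced word is likewise unjustified by anything you have written. Since the paper itself offers only a worked example and no general argument, neither text is a complete proof; but if you want to align with the paper, you should carry out the concrete $\ell=2$ computation with gcds rather than attempt the abstract syntactic collapse.
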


\begin{proof}
We only treat one simple instance, the general scheme being similar. Consider the Malcev word $L_1R_1L_1^*R_1^*$. With the notation of~\cite[p.\,310]{ClP2}, the corresponding quasi-identity is
\begin{equation}\label{E:Malcev}
( \dd \aa = \AA \CC \quand \dd \bb = \AA \DD \quand \cc \bb = \BB \DD ) \Rightarrow \cc \aa = \BB \CC.
\end{equation}
Assume that $\aa \wdots \DD$ satisfy the three left equations of~\eqref{E:Malcev}, see Figure~\ref{E:Malcev}. Let $\ee:= \AA \gcd \dd$, and define~$\AA'$ and~$\dd'$ by $\AA = \ee \AA'$, $\dd = \ee \dd'$. Then $\dd\aa = \AA\CC$ expands into $\ee\dd'\aa = \ee\AA' \CC$, whence $\dd'\aa = \AA'\CC$ by left cancelling~$\ee$. Similarly, $\dd\bb = \AA\DD$ implies $\dd'\bb = \AA'\DD$. As $\ee$ is the left gcd of~$\AA$ and~$\dd$, we must have $\dd' \gcd \AA' = 1$. By standard arguments (see for instance~\cite[Lemma~2.12]{Dit}), this with $\dd' \bb = \AA' \DD$ implies that $\dd'\bb$ is the left lcm of~$\bb$ and~$\DD$. As we have $\cc\bb = \BB\DD$, this implies that $\dd' \bb$ right divides~$\cc\bb$. Hence we have $\cc \bb = \ff \dd' \bb$ for some~$\ff$, which implies $\cc = \ff \dd'$ by right cancelling~$\bb$. Similarly, $\BB\DD = \ff \AA' \DD$ implies $\BB = \ff \AA'$. But then we deduce $\cc\aa = \ff\dd' \aa = \ff \AA' \CC = \BB\CC$, which proves~\eqref{E:Malcev}.
\end{proof}

\begin{rema}\label{R:Interpol}
The above argument remains valid when the assumption that $\MM$ admits gcds is relaxed into the condition that $\MM$ satisfies the interpolation property: if $\aa$ and~$\bb$ are common right multiple of~$\cc$ and~$\dd$, then there exists a common multiple~$\ee$ of~$\cc$ and~$\dd$ of which $\aa$ and~$\bb$ are multiples.
\end{rema}

\begin{figure}[htb]
\begin{picture}(60,26)(0,0)
\psset{nodesep=0.5mm}
\pcline[border=1mm,style=exist]{->}(0,24)(35,12)\taput{$\ee$}
\pcline{->}(0,24)(40,0)\put(3,17){$\AA$}
\pcline[border=1mm,style=exist]{->}(0,0)(35,12)\tbput{$\ff$}
\pcline{->}(40,0)(60,24)\put(57,17){$\DD$}
\pcline{->}(0,24)(40,24)\taput{$\dd$}
\pcline{->}(40,24)(60,24)\taput{$\bb$}
\pcline{->}(40,0)(60,0)\tbput{$\CC$}
\psline(0,0)(15,9)\pcline[border=1mm]{->}(10,6)(40,24)\put(4,5){$\cc$}
\pcline{->}(0,0)(40,0)\tbput{$\BB$}
\pcline[border=1mm]{->}(40,24)(60,0)\put(57,5){$\aa$}
\pcline[style=exist]{->}(35,12)(40,24)\put(38,16){$\dd'$}
\pcline[style=exist]{->}(35,12)(40,0)\put(37.5,7){$\AA'$}
\psarc[style=thinexist](35,12){3}{-70}{70}
\end{picture}
\caption{\sf Proof of the first Malcev condition in a gcd-monoid.}
\label{F:Malcev}
\end{figure}

%%%%
\subsection{A counter-example}

We now establish Proposition~B of the introduction. Proposition~\ref{P:Malcev} implies that, if a gcd-monoid fails to satisfy some Malcev condition, the latter has to be complicated. The monoid we construct below turns out to miss the Malcev condition encoded in $L_1 R_1 R_2 L_1^* R_2^* L_3 R_1^* R_3 L_3^* L_2^* R_3^*$, involving 24~variables and $11 + 1$ equalities (and not eligible for Proposition~\ref{P:Malcev}).

\begin{prop}\label{P:PropB}
\pup{See Figure~\ref{F:PropB}.}
Let $\Omega:= \{1, 2, 3, 4\}$, let $\PPB$ be the $14$-element poset $(\Pw(\Omega) \setminus \{\emptyset, \Omega\}, \subseteq)$, and let $\MMB$ admit the presentation obtained from the presentation~\eqref{Eq:IntMon} of~$\IM\PPB$ by deleting $[1, 12][12, 123] = [1, 13][13,123]$. Then~$\MMB$ is a noetherian gcd-monoid failing to embed into its group.
\end{prop}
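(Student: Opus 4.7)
The plan is to verify the three assertions --- noetherianity, the gcd-monoid property, and failure of $\MMB$ to embed into its enveloping group --- one at a time. Noetherianity is handled by exhibiting the weight function $\lambda\colon\MMB\to\NNNN$ defined on generators by $\lambda([x,y])=\vert y\vert-\vert x\vert$: this is strictly positive on the non-identity generators and is balanced on both sides of every kept relation of the form $[x,z]=[x,y][y,z]$, so $\lambda$ extends to a monoid morphism $\MMB\to(\NNNN,+)$ and $\MMB$ is strongly noetherian. For the gcd-monoid property I would follow the strategy of Proposition~\ref{P:Gcd}. Since $\MMB$ differs from $\IM\PPB$ only by refusing the single identification $[1,12][12,123]=[1,13][13,123]$, a normal form in the style of Proposition~\ref{P:NF} still holds for $\MMB$, with the element $[1,123]$ of $\IM\PPB$ splitting into a small bounded number of distinct elements of $\MMB$ corresponding to its surviving factorizations. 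Cancellativity is then read off the normal form, while the existence of left and right gcds follows from the same case analysis as in the proof of Proposition~\ref{P:Gcd}, the key structural input being that $\PPB$, as a truncated Boolean lattice, is a local lattice --- each $\SUP\PPB x$ is a meet-semilattice under $\cap$ and each $\INF\PPB x$ is a join-semilattice under $\cup$.

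For the failure of embedding, take $a:=[1,12][12,123]$ and $b:=[1,13][13,123]$, which are distinct in $\MMB$ by construction, and exhibit an explicit derivation of $a=b$ in $\EG{\MMB}$. First, the two kept relations $[1,12][12,124]=[1,14][14,124]$ and $[1,13][13,134]=[1,14][14,134]$ yield in the group
\begin{equation*}
[1,13]\inv[1,12]\;=\;[13,134]\,[14,134]\inv\,[14,124]\,[12,124]\inv,
\end{equation*}
so the task reduces to proving the equality of the two zig-zag paths from $13$ to $124$,
\begin{equation*}
[13,123]\,[12,123]\inv\,[12,124]\;=\;[13,134]\,[14,134]\inv\,[14,124],
\end{equation*}
in $\EG{\MMB}$. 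That equality I would in turn derive by chaining the kept commutation relations of the form $[i,ij][ij,ijk]=[i,ik][ik,ijk]$ taken from the three other $3$-element subsets $124$, $134$, $234$ of $\Omega$, together with the commutations of the cube over $123$ based at the vertices $2$ and $3$, combining them via group inverses to ``go around'' the missing square $\{1,12,13,123\}$ via the element~$4$ and its companions $14,24,34,124,134,234$.

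The main obstacle is this last step: recovering the deleted commutation as a group-theoretic consequence of the kept ones. Topologically, the derivation succeeds because the boundary of the missing square $\{1,12,13,123\}$ can be tiled, in the $2$-complex of kept commutation squares, by squares involving the elements of $\PPB$ touching~$4$. Consequently $\EG{\MMB}$ coincides with $\EG{\IM\PPB}$, so the non-injective surjection $\MMB\twoheadrightarrow\IM\PPB$ (which identifies $a$ with $b$) composed with the embedding $\IM\PPB\hookrightarrow\EG{\IM\PPB}$ of Proposition~\ref{P:NF} realizes the canonical map $\MMB\to\EG{\MMB}$ and is not injective, establishing the required failure of embedding.
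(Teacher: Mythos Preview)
Your overall plan is sound, and your arguments for noetherianity and for the failure of embedding match the paper's: the paper also observes that the relations are homogeneous (equivalent to your weight function), and it too derives the deleted relation $[1,12][12,123]=[1,13][13,123]$ in~$\EG{\MMB}$ by chaining the kept commutation squares through the vertex~$4$. The paper gives a fully explicit eight-step chain and then invokes the $2\leftrightarrow 3$ symmetry of the diagram, whereas you give the correct first reduction but leave the final chaining as a sketch; nothing is wrong there, only detail is missing.

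The substantive divergence is in the gcd-monoid verification, and here there is a genuine gap. The paper does not attempt a direct normal-form analysis; instead it identifies the interval $[1,123]$ as an \emph{extreme spindle} of~$\PPB$ (minimal~$1$, maximal~$123$, open interval $\{12,13\}$ an antichain) and invokes Propositions~9.5 and~9.6 of~\cite{Weh}, which show that the associated monoid~$\IM{\PPB,u,v}$ is a gcd-monoid and coincides with~$\MMB$. Your proposed route --- establish a normal form for~$\MMB$ analogous to Proposition~\ref{P:NF} and then rerun the case analysis of Proposition~\ref{P:Gcd} --- is plausible in outline, but the justification you offer does not go through. The proof of Proposition~\ref{P:NF} obtains uniqueness of the normal form, and hence cancellativity, by embedding~$\IM\PP$ into the free group~$\FG\PP$ via the map~\eqref{E:NF}. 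That technique cannot work for~$\MMB$: \emph{any} homomorphism from~$\MMB$ to a group must identify $[1,12][12,123]$ with $[1,13][13,123]$, since this is precisely the content of the third assertion you are proving. So the sentence ``cancellativity is then read off the normal form'' has no available proof of the normal form behind it. To make your approach work you would need a different mechanism --- for instance an explicit locally confluent rewriting system on interval words, or a direct combinatorial model of the elements of~$\MMB$ such as the maximal-chain description underlying~$\IM{\PPB,u,v}$ --- and that would have to be carried out in detail before the gcd case analysis can begin.
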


\begin{proof}
The poset $(\Pw(\Omega), \subseteq)$ is a lattice, hence every subset $\SUP{\Pw(\Omega)}\xx$ is a meet-semilattice, and therefore so is every subset $\SUP\PPB\xx$, since the latter is an initial subset of~$\SUP{\Pw(\Omega)}\xx$. Similarly, every subset $\SUP\PPB\xx$ is a join-semilattice, and $\PPB$ is a local lattice. Hence, by Proposition~\ref{P:Gcd}, $\IM\PPB$ is a gcd-monoid. This however says nothing \emph{a priori} about~$\MMB$, of which $\IM\PPB$ is a quotient.
A possibility is then to analyze the monoid~$\MMB$ \emph{via} the results of \cite[Sec.\,II.4]{Dir}.
Another possibility, perhaps requiring less calculations, is the following.

Set $u=1$ and $v=123$.
Following the terminology of~\cite[Sec.\,8]{Weh}, the closed interval $[1,123]$ of~$\PPB$ is an \emph{extreme spindle}, that is, $u$ is minimal, $v$ is maximal, there exists~$z$ such that $u<z<v$, and the comparability relation on the open interval $(u,v)$ (here, reduced to the two sets~$z_2=12$ and~$z_3=13$) is an equivalence relation.
By \cite[Prop.\,9.6]{Weh}, $\MMB$ is isomorphic to the monoid denoted there by $\IM{\PPB,u,v}$.
By \cite[Prop.\,9.5]{Weh}, it follows that~$\MMB$ is a gcd-monoid.
Since~$\MMB$ admits a presentation by homogeneous relations, it is noetherian.
Furthermore, in the monoid $\IM{\PPB,u,v}$, the products $[u,z_i][z_i,v]$, for $i\in\{2,3\}$, are respectively equal to the maximal chains $\{u,z_2,v\}$ and $\{u,z_3,v\}$, thus they are distinct.
Hence the relation $[1, 12][12, 123] = [1, 13][13,123]$ fails in~$\MMB$.

On the other hand, one easily checks on the right hand side diagram of Figure~\ref{P:PropB} that the relation holds in the group~$\EG{\MMB}$, as the following derivation shows:
\begin{align*}
[1, 12][12, 123]
&= [1, 12] [12, 123] [23, 123]\inv [23, 123]\\
&= [1, 12][2,12]\inv [2, 23] [23, 123]\\
&= [1, 12][2,12]\inv [2, 23] [23, 234] [23, 234]\inv [23, 123]\\
&= [1, 12][2,12]\inv [2, 24] [24, 234] [23, 234]\inv [23, 123]\\
&= [1, 12] [12, 124] [24,124]\inv [24, 234] [23, 234]\inv [23, 123]\\
&= [1, 14] [14, 124] [24,124]\inv [24, 234] [23, 234]\inv [23, 123]\\
&= [1, 14] [4, 14]\inv [4, 24] [24, 234] [23, 234]\inv [23, 123]\\
&= [1, 14] [4, 14]\inv [4, 34] [34, 234] [23, 234]\inv [23, 123],
\end{align*}
from which one returns to $[1, 13][13, 123]$ by a symmetric derivation where $2$ and~$3$ are interchanged, according to the symmetry of the diagram. Hence $\MMB$ does not embed into its enveloping group and, therefore, it is an example of a (noetherian) gcd-monoid for which reduction is not $2$-semi-convergent.
\end{proof}

\begin{figure}[htb]
\def\AArrow(#1,#2){\ncline[nodesep=0.3mm, linewidth=0.8pt, border=0.8pt]{->}{#1}{#2}}
\def\CArrow(#1,#2){\ncline[linecolor=red, nodesep=0.3mm, linewidth=0.8pt, border=0.8pt]{->}{#1}{#2}}
\def\PArrow(#1,#2){\ncline[linestyle=dashed, nodesep=0.3mm, linewidth=0.8pt, border=0.8pt]{#1}{#2}}
\begin{picture}(62,65)(0,-12)
\psset{nodesep=0.5mm}
\psset{unit=1.2mm}
\WPoint(25,-10,0)\nput{210}{0}{$\scriptstyle\emptyset$}
\WPoint(25,40,1234)\nput{90}{1234}{$\scriptstyle1234$}
\WPoint(5,0,1)\nput{270}{1}{$\scriptstyle1$}
\WPoint(20,0,2)\nput{270}{2}{$\scriptstyle2$}
\WPoint(30,0,3)\nput{270}{3}{$\scriptstyle3$}
\WPoint(45,0,4)\nput{270}{4}{$\scriptstyle4$}
\WPoint(0,15,12)\nput{180}{12}{$\scriptstyle12$}
\WPoint(10,15,13)\nput{180}{13}{$\scriptstyle13$}
\WPoint(20,15,14)\nput{180}{14}{$\scriptstyle14$}
\WPoint(30,15,23)\nput{0}{23}{$\scriptstyle23$}
\WPoint(40,15,24)\nput{0}{24}{$\scriptstyle24$}
\WPoint(50,15,34)\nput{0}{34}{$\scriptstyle34$}
\WPoint(5,30,123)\nput{120}{123}{$\scriptstyle123$}
\WPoint(20,30,124)\nput{120}{124}{$\scriptstyle124$}
\WPoint(30,30,134)\nput{60}{134}{$\scriptstyle134$}
\WPoint(45,30,234)\nput{45}{234}{$\scriptstyle234$}
\AArrow(1,14) \AArrow(2,12) \AArrow(2,23) \AArrow(2,24) \AArrow(3,13) \AArrow(3,23) \AArrow(3,34) \AArrow(4,14) \AArrow(4,24) \AArrow(4,34) \CArrow(1,12)\CArrow(1,13) 
\CArrow(12,123) \AArrow(12,124)
\CArrow(13,123) \AArrow(13,134)
\AArrow(14,124) \AArrow(14,134)
\AArrow(23,123) \AArrow(23,234)
\AArrow(24,124) \AArrow(24,234)
\AArrow(34,134) \AArrow(34,234)
\PArrow(0,1) \PArrow(0,2) \PArrow(0,3) \PArrow(0,4) 
\PArrow(123,1234) \PArrow(124,1234) \PArrow(134,1234) \PArrow(234,1234) 
\end{picture}
\begin{picture}(60,65)(-2,-3)
\psset{nodesep=0.5mm}
\psset{unit=1.10mm}
\WPoint(20,20,1)\nput{270}{1}{$\scriptstyle1$}
\WPoint(15,40,2)\nput{60}{2}{$\scriptstyle2$}
\WPoint(40,15,3)\nput{30}{3}{$\scriptstyle3$}
\WPoint(2,2,4)\nput{240}{4}{$\scriptstyle4$}
\WPoint(20,30,12)\nput{45}{12}{$\scriptstyle12$}
\WPoint(30,20,13)\nput{45}{13}{$\scriptstyle13$}
\WPoint(10,10,14)\nput{300}{14}{$\scriptstyle14$}
\WPoint(40,40,23)\nput{340}{23}{$\scriptstyle23$}
\WPoint(2,48,24)\nput{110}{24}{$\scriptstyle24$}
\WPoint(48,2,34)\nput{300}{34}{$\scriptstyle34$}
\WPoint(30,30,123)\nput{315}{123}{$\scriptstyle123$}
\WPoint(10,35,124)\nput{210}{124}{$\scriptstyle124$}
\WPoint(35,10,134)\nput{240}{134}{$\scriptstyle134$}
\WPoint(48,48,234)\nput{60}{234}{$\scriptstyle234$}
\CArrow(1,12)\CArrow(1,13) \AArrow(1,14) \AArrow(2,12) \AArrow(2,23) \AArrow(2,24) \AArrow(3,13) \AArrow(3,23) \AArrow(3,34) \AArrow(4,14) \AArrow(4,24) \AArrow(4,34) 
\CArrow(12,123) \AArrow(12,124)
\CArrow(13,123) \AArrow(13,134)
\AArrow(14,124) \AArrow(14,134)
\AArrow(23,123) \AArrow(23,234)
\AArrow(24,124) \AArrow(24,234)
\AArrow(34,134) \AArrow(34,234)
\end{picture}
\caption{\sf The poset~$\PPB$, viewed as a truncated $4$-cube (left) and as a planar graph (right); one easily sees, especially on the right hand side diagram, that the colored relation can be deduced from the other eleven relations in any group, that is, when one can cross the arrows of the diagram in the opposite direction.}
\label{F:PropB}
\end{figure}

As reduction is not $2$-semi-convergent for~$\MMB$, it cannot be semi-convergent either: in the current case, the $2$-multifraction $[1, 12][12,123] / [1, 13][13, 123]$ is unital, and it is irreducible. It is easy to deduce counter-examples to other properties considered in~\cite{Dit}. For instance, the $6$-multifraction 
$$[1, 12][12, 123]/ [23, 123] / [23, 234] / [4, 234] / [4, 14] / [1, 14]$$
reduces both to~$\one$ and to~$[1, 12][12, 123] / [1, 134] \opp \one$, contradicting what is called $\one$-confluence in~\cite{Dit}.

The monoid~$\MMB$ has $24$~atoms. By a systematic computer search, one can find quotients of~$\MMB$ with similar properties, for instance
$$\begin{matrix}
\langle\tta \wdots \mathtt{k} \mid \mathtt{ab=ba, bc=cb, cd=dc, de=ed, eb=ih,}\hspace{3cm}\\ 
\hspace{3cm}\mathtt{fc=cg, fa=dh, hd=ij, hg=kb, dj=ic, ie=kf}\rangle^+,
\end{matrix}$$
with $11$ atoms, the missing relation~$\mathtt{ad=cf}$, and the $6$-multifraction $\mathtt{ad/e/j/cd/f/b}$ that reduces both to~$\one$ and to~$\mathtt{ad/cf/1/1/1/1}$.

%%%%
\section{Semi-convergence in interval monoids}\label{S:NCInt}

We now establish sufficient conditions implying that reduction is semi-convergent for the interval monoid of a poset~$\PP$, and derive explicit examples where these conditions are satisfied.

%%%%
\subsection{Minimal multifractions}\label{SS:Ineludible}

\emph{A priori}, semi-convergence is an infinitary property, and we first introduce an induction scheme that may, in good cases, reduce it to finitary conditions. To this end, we first introduce a partial ordering on multifractions.

\begin{defi}
Let $\MM$ be a monoid. For~$\aav, \bbv$ in~$\FRb\MM$, say that $\aav$ is a \emph{proper piece} of~$\bbv$, written $\aav \piece \bbv$, if we have $\bbv = \ccv \opp \aav \opp \ddv$ for some~$\ccv, \ddv$ that are not both trivial. We say that $\aav$ is a \emph{piece} of~$\bbv$ if we have either $\aav \piece \bbv$ or $\aav = \bbv$.
\end{defi}

Thus $\piece$ is the factor relation of the monoid~$\FRb\MM$. Note that $\piece$ is not the lexicographical extension of the factor relation of the monoid~$\MM$.

\begin{lemm}\label{L:PieceWF}
If $\MM$ is a noetherian monoid, then the relation~$\piece$ is a well-founded strict partial ordering.
\end{lemm}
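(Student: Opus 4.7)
The plan is to prove transitivity directly and to derive both irreflexivity and well-foundedness from a single measure argument. For transitivity, assume $\aav \piece \bbv$ and $\bbv \piece \eev$, so $\bbv = \ccv_1 \opp \aav \opp \ddv_1$ and $\eev = \ccv_2 \opp \bbv \opp \ddv_2$ with neither pair both trivial. Substitution yields $\eev = (\ccv_2 \opp \ccv_1) \opp \aav \opp (\ddv_1 \opp \ddv_2)$. The elementary observation that a $\opp$-product of two multifractions is trivial if and only if both factors are trivial (each merger $a_n b_1 = 1$ would force $a_n = b_1 = 1$, by cancellativity and the absence of nontrivial invertibles) guarantees that the new pair $(\ccv_2 \opp \ccv_1, \ddv_1 \opp \ddv_2)$ is not both trivial.

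For well-foundedness, I would assume an infinite sequence $\bbv_0, \bbv_1, \ldots$ with $\bbv_{k+1} \piece \bbv_k$ for every $k$ and derive a contradiction. Since $\aav \piece \bbv$ forces $\dh\aav \le \dh\bbv$, the depth $\dh\bbv_k$ is weakly decreasing and eventually constant equal to some $n \ge 1$; after truncation, I may assume $\dh\bbv_k = n$ for all $k$. A short case analysis on the decomposition $\bbv_k = \ccv \opp \bbv_{k+1} \opp \ddv$ under depth preservation shows $\dh\ccv + \dh\ddv \le 2$, with only three compatible merger patterns: a lone L-merger, a lone R-merger, or both. In each, the inner entries $a_2^{(k+1)}, \ldots, a_{n-1}^{(k+1)}$ are copied verbatim into $\bbv_k$, and each extremal entry $a_1^{(k+1)}$ (resp.\ $a_n^{(k+1)}$) is either unchanged in $\bbv_k$ or is a proper divisor of $a_1^{(k)}$ (resp.\ $a_n^{(k)}$) in the well-founded strict order $\div$ or $\divt$ on $\MM$ determined by the fixed side of the entry (via the identification $\INV\MM \cong \MM^{\mathrm{op}}$). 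The hypothesis that $(\ccv, \ddv)$ are not both trivial then forces at least one of these two extremal transitions to be strict at every step.

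For $n \ge 2$, the pair $(a_1^{(k)}, a_n^{(k)})$ thus strictly decreases at every step in the product of two well-founded strict orders, itself well-founded: contradiction. For $n = 1$, writing $a^{(0)} = (c_0 \cdots c_{k-1}) a^{(k)} (d_{k-1} \cdots d_0)$ and setting $w_k = a^{(k)} d_{k-1} \cdots d_0$, one has $c_k w_{k+1} = w_k$, so having infinitely many $c_k \ne 1$ would give an infinite strictly $\divt$-descending chain $w_0, w_1, \ldots$, contradicting well-foundedness of $\divt$; otherwise $c_k = 1$ eventually, forcing $d_k \ne 1$ and $a^{(k+1)} \div a^{(k)}$ strictly, again contradicting well-foundedness. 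Irreflexivity is then immediate, as $\aav \piece \aav$ would yield the infinite descending chain $\aav, \aav, \ldots$. The main obstacle I anticipate is the depth-preserving case analysis together with the careful side-tracking of $\div$ versus $\divt$, and in particular the $n = 1$ sub-case, which requires the extra two-sided factorisation argument just sketched.
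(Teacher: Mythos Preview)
Your proof is correct and follows essentially the same route as the paper: depth stabilization followed by a product-order argument on the two extremal entries, with transitivity handled identically. The only differences are cosmetic or improvements on your side: the paper proves irreflexivity directly (by cancelling in $\aa_1 = \xx_1\aa_1$) rather than deducing it from well-foundedness, and it dismisses the depth-$1$ case with a ``treated similarly'', whereas your separate two-sided factorisation argument via $w_k = a^{(k)} d_{k-1}\cdots d_0$ actually fills a small gap there, since for $n=1$ the relation $a^{(k)} = c_k a^{(k+1)} d_k$ does \emph{not} directly give $a^{(k+1)} \dive a^{(k)}$ or $a^{(k+1)} \divet a^{(k)}$, so the naive product-order argument on the single entry does not apply.
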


\begin{proof}
That $\piece$ is transitive follows from the associativity of the product. Next, assume $\aav = \ccv \opp \aav \opp \ddv$. If $\ccv$ is not~$\ef$, the only possibility for guaranteeing $\dh\aav = \dh{\ccv \opp \aav \opp \ddv}$ is that $\ccv$ has depth one, with the sign of~$\aa_1$. In that case,
$\aav = \ccv \opp \aav \opp \ddv$ requires $\aa_1 = \xx_1 \aa_1$ (if $1$ is positive in~$\aav$) or $\aa_1 = \aa_1 \xx_1$ (if $1$ is negative in~$\aav$), hence $\xx_1 = 1$ in both cases, so $\ccv$ must be trivial. The argument is similar for~$\ddv$. Hence $\piece$ is irreflexive, and it is a strict partial ordering on~$\FRb\MM$.

Assume that $\aav^1 \piecet \aav^2 \piecet \pdots$ is an infinite descending chain in~$\FRb\MM$, say $\aav^\ii = \ccv^\ii \opp \aav^{\ii + 1} \opp \ddv^\ii$. By definition, $\aav \piece \bbv$ implies $\dh\aav \le \dh\bbv$, hence the sequence $\dh{\aav^\ii}$ is non-increasing, and there exists~$\NN$ and~$\nn$ such that $\dh{\aav^\ii} = \nn$ holds for every~$\ii \ge \NN$. Now, as above, $\dh\aav = \dh{\ccv \opp \aav \opp \ddv}$ requires $\dh\ccv \le 1$ and $\dh\ddv \le 1$. Assume for instance that $1$ is positive and $\nn$ is negative in~$\aav^\ii$ for~$\ii \ge \NN$. Then $\aav^\ii = \ccv^\ii \opp \aav^{\ii + 1} \opp \ddv^\ii$ implies $\aa^\ii_1 = \cc^\ii_1 \aa^{\ii + 1}_1$, whence $\aa^\ii_1 \multet \aa^{\ii + 1}_1$, and $\aa^\ii_1 \multt \aa^{\ii + 1}_1$ whenever $\ccv^\ii$ is not trivial, and, similarly, $\aa^\ii_\nn =\aa^{\ii + 1}_\nn \dd^\ii_1$, whence $\aa^\ii_\nn \multe \aa^{\ii + 1}_\nn$, and $\aa^\ii_\nn \mult \aa^{\ii + 1}_\nn$ whenever $\ddv^\ii$ is not trivial. So we have $(\aa_1^\ii, \aa_\nn^\ii) \PROD\multt\mult (\aa_1^{\ii + 1}, \aa_\nn^{\ii + 1})$, where, if $O, O'$ are partial orders on a set~$\XX$ and $(\xx, \xx')$, $(\yy, \yy')$ belong to~$\XX^2$, we write $(\xx, \xx') \PROD{O}{O'} (\yy, \yy')$ for the conjunction of $\xx \,O\, \yy$, $\xx' \,O'\, \yy'$, and at least one of $\xx \not= \yy$, $\xx' \not= \yy'$. Standard arguments show that $\PROD{O}{O'}$ is well-founded (no infinite descending sequence) whenever $O$ and~$O'$ are. If $\MM$ is a noetherian monoid, then, by definition, the partial orders~$\mult$ and~$\multt$ are well-founded, and, therefore, so is $\PROD\multt\mult$, hence a descending chain as above cannot exist. The other sign possibilities for~$1$ and~$\nn$ in~$\aav^\ii$ are treated similarly, appealing to the various $\vert$-combinations of~$\mult$ and~$\multt$.\end{proof}

Thus we may develop inductive arguments based on the relation~$\piece$ and, in particular, appeal to $\piece$-minimal multifractions, which must exist.

\begin{prop}\label{P:Base}
Assume that $\MM$ is a noetherian gcd-monoid. Then reduction is semi-convergent for~$\MM$ if and only if
\begin{equation}\label{E:Base}
\parbox{110mm}{Every nontrivial $\piece$-minimal unital multifraction on~$\MM$ is reducible.}
\end{equation}
\end{prop}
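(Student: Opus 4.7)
The plan is to treat the two directions separately: the forward implication is immediate from Lemma~\ref{L:NC}, while the converse relies on an induction along the well-founded relation~$\piece$ supplied by Lemma~\ref{L:PieceWF}.

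For the forward direction, assume reduction is semi-convergent. By Lemma~\ref{L:NC}, every unital multifraction on~$\MM$ is either trivial or reducible, so in particular every nontrivial $\piece$-minimal unital multifraction is reducible, and~\eqref{E:Base} follows.

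For the converse, I would argue by contradiction: assume~\eqref{E:Base} and, invoking the reverse direction of Lemma~\ref{L:NC}, suppose that some unital multifraction~$\aav$ is nontrivial and irreducible. Let $\mathcal{U}$ denote the set of all nontrivial unital multifractions that are pieces of~$\aav$, that is, that equal~$\aav$ or are proper $\piece$-subterms of~$\aav$. Then $\aav\in\mathcal{U}$ and, by Lemma~\ref{L:PieceWF}, $\mathcal{U}$ admits a $\piece$-minimal element~$\bbv$. A short argument based on the transitivity of~$\piece$ shows that any nontrivial unital multifraction which is a proper $\piece$-subterm of~$\bbv$ is still a piece of~$\aav$, hence lies in~$\mathcal{U}$; so $\bbv$ is in fact $\piece$-minimal in the full class of nontrivial unital multifractions, and~\eqref{E:Base} supplies a reduction $\bbv\rd\bbv'$ with $\bbv'\ne\bbv$.

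It remains to lift this reduction back to~$\aav$. Writing $\aav=\ccv\opp\bbv\opp\ddv$, either $\bbv=\aav$---in which case $\aav$ itself is reducible, contradicting our choice---or $\bbv\piece\aav$ properly; in the latter case, Lemma~\ref{L:Basic}\ITEM1 yields $\aav\rds\ccv\opp\bbv'\opp\ddv$, and the cancellativity of~$\MM$, together with the explicit formula for~$\opp$ in Definition~\ref{D:Multifrac}, forces $\ccv\opp\bbv'\opp\ddv\ne\aav$, so that this $\rds$-sequence involves at least one genuine $\rd$-step. Hence~$\aav$ is reducible, contradicting our choice. The main (minor) technical point to check---both in the passage from $\piece$-minimality within~$\mathcal{U}$ to global $\piece$-minimality, and in deducing $\ccv\opp\bbv\opp\ddv\ne\ccv\opp\bbv'\opp\ddv$ from $\bbv\ne\bbv'$---is that the possible merging of same-sign boundary entries under~$\opp$ cannot collapse nontrivial contributions, which follows from cancellativity of~$\MM$ together with the absence of nontrivial invertible elements in a noetherian gcd-monoid.
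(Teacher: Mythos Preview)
Your proof is correct and follows essentially the same route as the paper's: both directions proceed identically, finding a $\piece$-minimal nontrivial unital piece~$\bbv$ of~$\aav$ via Lemma~\ref{L:PieceWF}, applying~\eqref{E:Base}, and lifting reducibility through Lemma~\ref{L:Basic}\ITEM1. You are simply more explicit about two points the paper leaves implicit---that $\piece$-minimality among pieces of~$\aav$ yields global $\piece$-minimality (by transitivity of~$\piece$), and that the lifted $\rds$-step is genuinely nontrivial (by cancellativity in~$\MM$)---but these are routine verifications, not a different strategy.
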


\begin{proof}
If reduction is semi-convergent for~$\MM$, then every nontrivial unital multifraction must be reducible, so the condition is necessary. Conversely, assume \eqref{E:Base}. The family of all nontrivial unital pieces of~$\aav$ is nonempty (it contains~$\aav$ at least), hence, by Lemma~\ref{L:PieceWF}, it contains at least one $\piece$-minimal element, say~$\bbv$. Then~\eqref{E:Base} implies that $\bbv$ is reducible. By Lemma~\ref{L:Basic}, this implies that $\aav$ itself is reducible. Hence reduction is semi-convergent for~$\MM$. 
\end{proof}

\begin{coro}\label{C:Decid}
Assume that $\MM$ is a noetherian gcd-monoid that admits finitely many $\piece$-minimal unital multifractions. Then for reduction to be semi-convergent for~$\MM$ is a decidable property.
\end{coro}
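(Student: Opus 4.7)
The plan is to apply Proposition~\ref{P:Base} to turn semi-convergence into a finite conjunction of reducibility tests, and then to observe that reducibility of a single multifraction is itself decidable. By Proposition~\ref{P:Base}, reduction is semi-convergent for~$\MM$ if and only if every nontrivial $\piece$-minimal unital multifraction on~$\MM$ is reducible. Under the hypothesis of the corollary, the universal quantifier here ranges over a finite set, so it suffices to argue that, given a specific multifraction~$\aav$, deciding whether~$\aav$ is reducible can be done in finitely many steps.

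For this, fix~$\aav$ of depth~$\nn$. By Definition~\ref{D:Redpm}, $\aav$ is reducible exactly when there exist an index $\ii \le \nn$ and an element $\xx \neq 1$ in~$\MM$ such that $\aav \act \Red{\ii,\xx}$ is defined. Since~$\nn$ is finite, one may treat each~$\ii$ separately. For each such~$\ii$, a direct inspection of the four cases of Definition~\ref{D:Redpm} shows that applicability of $\Red{\ii,\xx}$ amounts to the existence of a nontrivial~$\xx$ right-dividing (or left-dividing) a specific entry of~$\aav$ and admitting a prescribed left (or right) lcm with a neighbouring entry. As established in~\cite{Dit}, the set of such admissible~$\xx$ has a distinguished maximum with respect to divisibility, which is computable as a gcd of entries of~$\aav$; reducibility of~$\aav$ at position~$\ii$ is then equivalent to the non-triviality of that gcd.

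The decision procedure for semi-convergence is therefore: enumerate the finitely many nontrivial $\piece$-minimal unital multifractions on~$\MM$, test each one for reducibility by performing, position by position, the finitely many gcd computations described above, and return ``semi-convergent'' if and only if all tests succeed. The only step that is not strictly formal is the implicit assumption that gcds and lcms in~$\MM$ are effectively computable; this is the standing finiteness/effectiveness context in which the corollary is to be applied (in particular, it is trivially satisfied for interval monoids of finite posets, which are the intended target in the sequel). No deeper obstacle arises, the conceptual work being entirely absorbed by Proposition~\ref{P:Base}.
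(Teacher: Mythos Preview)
Your proof follows essentially the same route as the paper's: invoke Proposition~\ref{P:Base} to reduce semi-convergence to reducibility of finitely many multifractions, then observe that reducibility of a single multifraction is decidable. The paper simply cites~\cite[Prop.\,3.27]{Dit} for the latter step and notes the need for a finite presentation, whereas you sketch the mechanism and flag the required effectiveness hypothesis explicitly.

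One small inaccuracy in your sketch: for $\ii \ge 2$, the maximal admissible~$\xx$ at level~$\ii$ is not literally ``a gcd of entries of~$\aav$''; the condition involves both a divisibility constraint on~$\aa_{\ii+1}$ and the existence of a common multiple with~$\aa_\ii$, so the relevant computation is an iterated lcm/residue process rather than a single gcd. This does not affect correctness, since the result you cite from~\cite{Dit} covers it, but the phrasing overstates the simplicity of the test.
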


\begin{proof}
By Proposition~\ref{P:Base}, deciding whether reduction is semi-convergent for~$\MM$ amounts to checking the reducibility of the finitely many unital multifractions. Now, starting from a finite presentation of~$\MM$, testing the reducibility of one multifraction is a decidable property, see~\cite[Prop.\,3.27]{Dit}.
\end{proof}

If we consider multifractions with bounded depth, since $\aav \piece \bbv$ implies $\dh\aav \le\nobreak \dh\bbv$, we obtain the following local version of Proposition~\ref{P:Base}:

\begin{prop}\label{P:BaseNC}
Assume that $\MM$ is a noetherian gcd-monoid. Then reduction is $\nn$-semi-convergent for~$\MM$ if and only if \eqref{E:Base} restricted to $\piece$-minimal unital multifractions of depth~$\le \nn$ holds.
\end{prop}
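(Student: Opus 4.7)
The plan is to repeat the argument of Proposition~\ref{P:Base} verbatim, inserting a depth bound at every step. The key property that makes this succeed is the depth monotonicity of~$\piece$, namely that $\aav \piece \bbv$ implies $\dh\aav \le \dh\bbv$ (already observed in the proof of Lemma~\ref{L:PieceWF}). Together with the fact that the rules~$\Red{\ii, \xx}$ preserve depth, this ensures that the whole argument remains within the set of multifractions of depth $\le \nn$.

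First I would establish the $\nn$-restricted analogue of Lemma~\ref{L:NC}: under the noetherianity hypothesis, reduction is $\nn$-semi-convergent for~$\MM$ if and only if every unital multifraction~$\aav$ with $\dh\aav \le \nn$ is trivial or reducible. The proof is literally the same as that of Lemma~\ref{L:NC}; the only extra point is that the irreducible terminal form~$\bbv$ produced by termination (Lemma~\ref{L:Basic}\ITEM2) still satisfies $\dh\bbv = \dh\aav \le \nn$, so it falls within the scope of the hypothesis.

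Turning to the main equivalence, the forward direction is immediate: any $\piece$-minimal nontrivial unital multifraction of depth $\le \nn$ is a nontrivial unital multifraction of depth $\le \nn$, hence reducible by the intermediate step. For the converse, let $\aav$ be a nontrivial unital multifraction with $\dh\aav \le \nn$. The family of nontrivial unital pieces of~$\aav$ is nonempty (it contains~$\aav$ itself), so Lemma~\ref{L:PieceWF} yields a $\piece$-minimal element~$\bbv$ therein. Transitivity of~$\piece$ ensures that $\bbv$ is in fact $\piece$-minimal within the class of \emph{all} nontrivial unital multifractions, since any strict $\piece$-predecessor of~$\bbv$ would also be a piece of~$\aav$. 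Since $\dh\bbv \le \dh\aav \le \nn$, the restricted hypothesis provides reducibility of~$\bbv$, and Lemma~\ref{L:Basic}\ITEM1 then propagates reducibility from~$\bbv$ up to~$\aav$.

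No substantive obstacle is anticipated. The only point requiring a word of care is the passage from $\piece$-minimality among the pieces of~$\aav$ to $\piece$-minimality among all nontrivial unital multifractions, and this follows immediately from the transitivity of~$\piece$.
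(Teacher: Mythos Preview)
Your proposal is correct and follows exactly the approach the paper indicates: the paper omits an explicit proof, merely noting that ``since $\aav \piece \bbv$ implies $\dh\aav \le \dh\bbv$, we obtain the following local version of Proposition~\ref{P:Base}'', and your argument is precisely the spelling-out of this remark. The one subtlety you flag---that $\piece$-minimality among the pieces of~$\aav$ coincides with global $\piece$-minimality among nontrivial unital multifractions---is handled correctly via transitivity and depth monotonicity.
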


%%%%
\subsection{Simple multifractions}\label{SS:Basic}

We now consider the case of an interval monoid, with the aim of pinpointing a small set of multifractions containing all $\piece$-minimal unital multifractions.

We recall that, if $\PP$ is a poset and $\II$ is an interval of~$\PP$, we write $\sr\II$ (\resp $\tg\II$) for the source (\resp target) of~$\II$. We first extend the notation to nontrivial elements of~$\IM\PP$ by putting $\sr(\aa):= \sr(\II_1)$ and $\tg(\aa):= \tg(\II_\pp)$ for $\aa = \II_1 \pdots \II_\pp$. Then we extend it to multifractions by putting $\sr\aav = \sr\aa_1$ (\resp $\tg\aa_1$) for $\aa_1$ positive (\resp negative), and $\tg\aav = \tg\aa_\nn$ (\resp $\sr\aa_\nn$) for $\aa_\nn$ positive (\resp negative).

\begin{defi}
Let $\PP$ be a local lattice, and $\MM = \IM\PP$. Call a multifraction~$\aav$ on~$\MM$ \emph{simple} if each entry of~$\aav$ is a proper interval and, moreover, $\tg{\aa_\ii} = \tg{\aa_{\ii + 1}}$ (\resp $\sr{\aa_\ii} = \sr{\aa_{\ii + 1}}$) holds for each~$\ii < \dh\aav$ that is positive (\resp negative) in~$\aav$. Write~$\Sim\MM$ for the family of all simple multifractions on~$\MM$ that are unital, and $\Simm\MM$ for the family of all $\piece$-minimal elements of~$\Sim\MM$.
\end{defi}

By definition, the proper intervals and their inverses are simple and, therefore, every multifraction is a finite product of simple multifractions. Exactly as in Proposition~\ref{P:NF} for decomposing the elements of~$\MM$, we have

\begin{lemm}\label{L:SignedNormalDec}
Let $\PP$ be a local lattice. Call a sequence of simple multifractions $(\aav^1 \wdots \aav^\pp)$ \emph{normal} if $\tg\aav^\ii \not= \tg{\aav^{\ii + 1}}$ \pup{\resp $\sr\aav^\ii \not= \sr{\aav^{\ii + 1}}$} holds for each~$\ii < \dh\aav$ such that $\aav^\ii$ is positive \pup{\resp negative}. Then every signed multifraction on~$\IM\PP$ admits a normal decomposition into simple multifractions.
\end{lemm}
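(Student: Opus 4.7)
The argument will mimic the existence part of the proof of Proposition~\ref{P:NF}(2), this time producing a decomposition of a multifraction into simple multifractions rather than of a monoid element into proper intervals. Let $\aav = (\aa_1 \wdots \aa_\nn)$ be a multifraction on $\IM\PP$.

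First, I apply Proposition~\ref{P:NF}(2) to each nontrivial entry $\aa_\jj$ of $\aav$ to obtain its normal decomposition $\aa_\jj = \II^\jj_1 \pdots \II^\jj_{p_\jj}$ into proper intervals of $\PP$. For each pair $(\jj, \kk)$, I form the $1$-multifraction whose unique entry is $\II^\jj_\kk$, carrying the sign that $\aa_\jj$ bears in $\aav$; each such $1$-multifraction is automatically simple. Applying the multiplication rule of Definition~\ref{D:Multifrac}, same-sign adjacent $1$-multifractions (those lying inside a single $\aa_\jj$ block) combine back into $\aa_\jj$ via multiplication in $\IM\PP$, while opposite-sign adjacent ones (at the boundary between consecutive $\aa_\jj$ blocks) concatenate. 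The resulting product therefore evaluates to $\aav$ in $\FRb\MM$, yielding an initial expression of $\aav$ as a product of simple multifractions.

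Second, I iteratively reduce the current decomposition by the following rule: whenever two consecutive terms $\aav^\kk, \aav^{\kk+1}$ satisfy that their product $\aav^\kk \opp \aav^{\kk+1}$ in $\FRb\MM$ is itself simple, I replace the pair by this single product. Each such move strictly decreases the length of the sequence, so the process terminates. I then verify that the terminal sequence satisfies the normality condition. At a junction between $\aav^\kk$ and $\aav^{\kk+1}$, two cases arise. If the signs of the last entry of $\aav^\kk$ and the first entry of $\aav^{\kk+1}$ agree, the product combines those two entries into their product in $\IM\PP$, which is a proper interval (and hence the product is simple) exactly when the two boundary intervals have matching endpoints. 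If the signs disagree, the product concatenates, and simplicity amounts to the endpoint-matching constraint of the definition of a simple multifraction at the junction position. In either case, non-mergeability translates into an inequality between $\tg$ or $\sr$ values at the junction, matching the condition stated in the lemma once the conventions that extend $\sr$ and $\tg$ from intervals to multifractions are unfolded.

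\textbf{Main obstacle.} The delicate point is the routine but careful case analysis at each junction, matching the $\sr/\tg$ conventions for proper intervals with the extended conventions for multifractions so that the various sign patterns collapse into the single condition formulated in the lemma. In particular, I must check that the same-sign junctions internal to an $\aa_\jj$ block are automatically non-mergeable, which follows from the normality of the decomposition of $\aa_\jj$ provided by Proposition~\ref{P:NF}(2): merging such a junction would require the product $\II^\jj_\kk \II^\jj_{\kk+1}$ to be a proper interval, which is precisely excluded by the condition $\tg{\II^\jj_\kk} \neq \sr{\II^\jj_{\kk+1}}$ characterizing the normal form of~$\aa_\jj$.
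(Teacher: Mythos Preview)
Your proposal is correct and follows essentially the same approach as the paper's proof: start from some decomposition of~$\aav$ into simple multifractions, then iteratively merge adjacent pairs whose product remains simple until no further merging is possible. The paper's version is terser---it simply invokes the remark preceding the lemma that every multifraction is a finite product of simple multifractions and omits the verification that the terminal sequence is normal---whereas you construct the initial decomposition explicitly via Proposition~\ref{P:NF}\ITEM2 and sketch the case analysis confirming that non-mergeability coincides with the stated normality condition; but the underlying argument is the same.
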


\begin{proof}
Start with an arbitrary decomposition of the considered multifraction~$\aav$ as a finite product of simple multifractions. As long as there exist two adjacent entries~$\aav^\kk, \aav^{\kk + 1}$ whose product remains simple, shorten the decomposition by replacing $(\aav^\kk, \aav^{\kk + 1})$ with $(\aav^\kk \opp \aav^{\kk + 1})$. After finitely many steps, one obtains a normal decomposition.
\end{proof}

Our aim is to prove:

\begin{prop}\label{P:PosBase}
If $\PP$ is a local lattice and $\MM$ is~$\IM\PP$, then every $\piece$-minimal unital multifraction is simple.
\end{prop}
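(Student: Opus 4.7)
The plan is to prove the contrapositive: if $\aav$ is a nontrivial unital multifraction on~$\IM\PP$ that fails to be simple, then I would exhibit a nontrivial unital proper piece $\bbv\piece\aav$, contradicting $\piece$-minimality. The main tool is the morphism $\phi\colon\IM\PP\to\FG\PP$ from the proof of Proposition~\ref{P:NF}, which sends $[\xx,\yy]\mapsto\xx\inv\yy$ and which is injective on~$\IM\PP$. Extending $\phi$ to multifractions in the obvious way by $\phi(\aav):=\phi(\aa_1)\phi(\aa_2)\inv\phi(\aa_3)\cdots$, any unital $\aav$ satisfies $\phi(\aav)=1$ in the free group $\FG\PP$.

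The first step would be to translate simplicity into a combinatorial property of $\phi(\aav)$. Writing each $\aa_\ii$ in normal form $\II_1\cdots\II_{p_\ii}$, the normal-form condition $\tg\II_\kk\ne\sr\II_{\kk+1}$ guarantees that each block $\phi(\aa_\ii)^{\pm 1}$ is already freely reduced in $\FG\PP$, so all cancellations in the concatenation $\phi(\aav)$ are confined to the seams between successive blocks. A direct inspection shows that such a seam cancellation at position~$\ii$ occurs exactly when the simplicity matching condition of Section~\ref{SS:Basic} holds at~$\ii$. Since $\aav$ is nontrivial yet $\phi(\aav)$ reduces to the empty word, at least one matching boundary must exist.

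The second step would be to extract a piece at such a matching boundary. Assume $\ii$ is positive with $\tg\aa_\ii=\tg\aa_{\ii+1}=\yy$ (the negative case being symmetric), let $[\xx,\yy]$ and $[\xx',\yy]$ be the last intervals in the normal forms of $\aa_\ii$ and $\aa_{\ii+1}$, and set $\zz:=\xx\lcm\xx'$ in the join-semilattice $\INF\PP\yy$ (which exists by the local lattice property of~$\PP$) together with $\uu:=[\zz,\yy]$. Factoring $\aa_\ii=\aa'_\ii\uu$ and $\aa_{\ii+1}=\aa'_{\ii+1}\uu$, I would then define
\begin{equation*}
\bbv:=(\uu,\INV{\uu}),\quad \ccv:=(\aa_1 \wdots \aa_{\ii-1},\aa'_\ii), \quad \ddv:=(\INV{\aa'_{\ii+1}},\aa_{\ii+2} \wdots \aa_\nn),
\end{equation*}
and verify $\aav=\ccv\opp\bbv\opp\ddv$ by two same-sign merges in the multiplication rule of Definition~\ref{D:Multifrac}. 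Whenever $\uu\ne 1$, the piece $\bbv$ is visibly nontrivial and unital, and nontriviality of $\aav$ forces $\ccv$ or $\ddv$ to be nontrivial, producing the desired proper piece.

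The main obstacle is the degenerate case $\zz=\yy$ (equivalently $\uu=1$) at every matching boundary: no nontrivial $\bbv$ of this simple shape can then be peeled off. In that case, one must either iterate the right-gcd computation on the preceding intervals --- possible when the very last intervals already coincide, so the obstruction moves one step inward --- or exploit an entry of degree $\ge 2$ whose internal structure can be split to create a fresh matching boundary with nontrivial gcd. The tree geometry of the Cayley graph of $\FG\PP$ constrains where $\phi(\aav)$ can close up and, combined with the local lattice property of~$\PP$, pins down an extraction in all remaining cases.
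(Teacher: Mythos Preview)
Your approach differs substantially from the paper's, and it has a genuine gap that you yourself flag but do not close.

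The paper does not try to extract a depth-two piece $(u,\bar u)$ at all. Instead it takes the normal decomposition $\aav=\aav^1\cdots\aav^p$ into maximal simple factors (Lemma~\ref{L:SignedNormalDec}) and applies the highlighting morphism $\psi$ of Lemma~\ref{L:Separ} into the \emph{free product} $\FG\PP*\EG{\IM\PP}$. The point of the free product is that the expression $\prod_k (\sr\aav^k)^{-1}\,\can(\aav^k)\,(\tg\aav^k)$ is already in normal form there whenever every $\can(\aav^k)\ne 1$, because the normality of the decomposition prevents cancellation between adjacent $\FG\PP$-syllables. Hence some $\aav^k$ is itself unital, and that is the proper simple piece.

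Your morphism $\phi$ into $\FG\PP$ alone is strictly weaker: for a simple piece $\aav^k$ one only gets $\phi(\aav^k)=(\sr\aav^k)^{-1}(\tg\aav^k)$, which detects whether the underlying zigzag is \emph{closed}, not whether the piece is \emph{unital}. The bowtie example after Corollary~\ref{C:PosNCn} shows these differ. So $\phi$ cannot play the role you assign it without extra input.

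Concretely, your degenerate case is real and your last paragraph does not dispose of it. At a positive matching seam where $\aa_\ii=[x,y]$ and $\aa_{\ii+1}=[x',y]$ are both single intervals with $x\lor x'=y$ (for instance $[1,3]$ and $[2,3]$ in the diamond $\{0,1,2,3\}$), you get $u=1$; there are no ``preceding intervals'' to iterate on and no degree-$\ge 2$ entry to exploit. You would then need to show that \emph{some} matching seam in a non-simple unital $\aav$ always has nontrivial right/left gcd, and you give no argument for this. A secondary issue: your seam-cancellation analysis tacitly assumes every $\aa_j\ne 1$, but a trivial entry already makes $\aav$ non-simple while contributing an empty block to $\phi(\aav)$, so the cancellation pattern is no longer confined to the seams you describe.
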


The proof relies on the following technical result. The construction of~$\psi$ from~$\iota$, introduced in~\cite{Weh}, is called there the \emph{highlighting expansion}.

\begin{lemm}\label{L:Separ}
If $\PP$ is a local lattice and $\MM$ is~$\IM\PP$, there exists a unique morphism~$\psi$ from the monoid~$\FRb\MM$ to the free product $\FG{\PP} * \EG{\IM\PP}$ satisfying
\begin{equation}\label{E:Separ}
\psi(\aav) = \sr\aav\inv \opp \iota(\aav) \opp \tg\aav.
\end{equation}
for every simple multifraction~$\aav$. A signed multifraction~$\aav$ is unital if and only if $\psi(\aav) = 1$ holds.
\end{lemm}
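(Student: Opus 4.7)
The plan is to construct $\psi$ explicitly, starting from proper-interval $1$-multifractions via the stated formula, then extending it multiplicatively using the monoid structures of $\MM$ and $\INV\MM$, and finally to establish the unital criterion from both sides of the free product.

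For every proper interval $\II$ of $\PP$, I set $\psi((\II)) := \sr\II\inv \iota(\II) \tg\II$ and $\psi((\Inv\II)) := \tg\II\inv \iota(\II)\inv \sr\II$ in $\FG\PP * \EG{\IM\PP}$. Proposition~\ref{P:NF} provides the unique normal decomposition $\aa = \II_1 \pdots \II_\pp$ of every $\aa \in \MM$, and $\MM = \IM\PP$ is presented by the proper intervals modulo the relations~\eqref{Eq:IntMon}. A straightforward telescoping check---when $\tg\II = \sr\JJ$, the factors $\tg\II$ and $\sr\JJ\inv$ cancel in $\FG\PP$ while $\iota(\II)\iota(\JJ) = \iota([\sr\II,\tg\JJ])$---shows that $\II \mapsto \psi((\II))$ respects \eqref{Eq:IntMon}, so it extends uniquely to a monoid morphism on $\MM$, identified with positive $1$-multifractions. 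The symmetric argument applies on $\INV\MM$: the rule $(\Inv\aa)\opp(\Inv\bb) = (\Inv{\bb\aa})$ from Definition~\ref{D:Multifrac} forces $(\Inv\aa) = (\Inv{\II_\pp})\opp\pdots\opp(\Inv{\II_1})$ for $\aa = \II_1\pdots\II_\pp$, and the analogous telescoping check provides the extension. The two morphisms glue along the identity $(1) = (\Inv 1)$ to produce the sought monoid morphism $\psi : \FRb\MM \to \FG\PP * \EG{\IM\PP}$ (essentially the highlighting expansion of~\cite{Weh}).

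To verify~\eqref{E:Separ} on a simple $\aav = (\aa_1 \wdots \aa_\nn)$, notice that each $\aa_i$ is already a proper interval, and expanding $\psi(\aav)$ as the product of the $\psi((\aa_i^{\pm}))$ places $\tg\aa_i$ (resp.\ $\sr\aa_i$) right next to its inverse in the following factor. The simple condition $\tg\aa_i = \tg\aa_{i+1}$ at positive~$i$ and $\sr\aa_i = \sr\aa_{i+1}$ at negative~$i$ cancels every interior pair, leaving exactly $\sr\aav\inv \iota(\aav) \tg\aav$. Uniqueness of $\psi$ then follows because proper-interval $1$-multifractions are simple, so the formula pins down $\psi$ on a generating set of $\FRb\MM$.

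The forward direction of the unital criterion uses the congruence $\simeqb$. By Proposition~\ref{P:EnvGroup}\ITEM2, $\simeqb$ is generated by $(1,\ef)$, $(\aa/\aa,\ef)$, and $(/ \aa / \aa,\ef)$ for $\aa\in\MM$. Plainly $\psi((1)) = 1$. For the second pair, writing $\aa = \II_1\pdots\II_\pp$ and expanding $\psi((\aa))\psi((\Inv\aa))$ produces a palindrome which telescopes to $1$ by iterated cancellation from the centre outward: $\tg\II_\pp\cdot\tg\II_\pp\inv$ vanishes in $\FG\PP$, then $\iota(\II_\pp)\iota(\II_\pp)\inv$ in $\EG{\IM\PP}$, then $\sr\II_\pp\inv\cdot\sr\II_\pp$ in $\FG\PP$, exposing $\tg\II_{\pp-1}\cdot\tg\II_{\pp-1}\inv$, and so on. The third pair is symmetric. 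Hence $\psi$ descends to a group morphism from $\EG{\IM\PP}$, so any unital $\aav$ satisfies $\psi(\aav) = \psi(\ef) = 1$. For the converse direction, consider the canonical retraction $\pi : \FG\PP * \EG{\IM\PP} \to \EG{\IM\PP}$ that is the identity on the second factor and sends every generator of $\FG\PP$ to $1$. On $(\II)$, one has $\pi(\psi((\II))) = 1\cdot\iota(\II)\cdot 1 = \iota(\II)$, and similarly for $(\Inv\II)$; hence $\pi\circ\psi = \iota$, so $\psi(\aav) = 1$ forces $\iota(\aav) = 1$, completing the equivalence. The main delicate point is to handle consistently the reverse-order convention on $\INV\MM$ forced by the $\FRb\MM$-multiplication rule; once this is fixed, all verifications reduce to mechanical telescoping inside the free product.
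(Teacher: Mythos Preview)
Your argument is correct and follows essentially the same route as the paper's proof: define~$\psi$ on intervals by the stated formula, check compatibility with the relations~\eqref{Eq:IntMon} via telescoping, extend to~$\INV\MM$ (the paper does this in one stroke by setting $\psi(\Inv\aa) = \psi(\aa)\inv$, which spares your separate verification for the opposite monoid), verify~\eqref{E:Separ} on simple multifractions by cancellation of matching endpoints, and handle the unital equivalence by checking the generators of~$\simeqb$ for one direction and collapsing the~$\FG\PP$ factor for the other. Your treatment is slightly more explicit about the reverse-order convention on~$\INV\MM$ and the palindromic cancellation for $\psi(\aa/\aa)=1$, but the underlying ideas are identical.
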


\begin{proof}
For $\xx < \yy$ in~$\PP$, put $\psi([\xx, \yy]) := \xx\inv \opp \iota([\xx, \yy]) \opp \yy$, an element of the group $\FG{\PP} * \EG{\IM\PP}$, and extend~$\psi$ to the free monoid on the proper intervals of~$\PP$. For $\xx < \yy < \zz \in \PP$, we find
$$\psi([\xx, \yy]) \psi([\yy, \zz]) = \xx\inv \iota([\xx, \yy]) \yy \opp \yy\inv \iota([\yy, \zz]) \zz = \xx\inv \iota([\xx, \zz]) \zz = \psi([\xx, \zz]),$$
hence $\psi$ induces a well defined morphism from~$\MM$ to $\FG{\PP} * \EG{\IM\PP}$. Next, for~$\INV\aa$ in~$\INV\MM$, we put $\psi(\INV\aa) := \psi(\aa)\inv$, and we extend~$\psi$ multiplicatively to a morphism from the monoid~$\FRb\MM$ to $\FG{\PP} * \EG{\IM\PP}$: to check that this morphism is well defined, it suffices to consider the four products $\psi(\aa^\eps)\psi(\bb^{\eps'}) = \psi(\aa^\eps \opp \bb^{\eps'})$ for $\aa, \bb$ in~$\MM$ and $\eps, \eps'$ in~$\{\pm\}$ (where $\aa^+$ stands for~$\aa$ and $\aa^-$ for $\INV\aa$), which is straightforward. By construction, \eqref{E:Separ} is valid for the intervals and their inverses, and an obvious induction on the number of intervals extends the equality to all simple multifractions. Thus $\psi$ exists as expected. As simple multifractions generate~$\FRb\MM$, uniqueness is clear.

By Proposition~\ref{P:EnvGroup}, a multifraction~$\aav$ is unital, that is, $\can(\aav) = 1$ holds, if and only if we have $\aav \simeqb 1$, where $\simeqb$ is generated by the pairs $(1, \ef)$, $(\aa / \aa, \ef)$, and $(/ \aa / \aa, \ef)$ with $\aa$ in~$\MM$. We have $\psi(\ef) = 1$ and $\psi$ is a morphism, so, in order to prove that $\can(\aav) = 1$ implies $\psi(\aav) = 1$, it is enough to check the values $\psi(\aa / \aa) = 1$ and $\psi(/ \aa / \aa) = 1$, which is straightforward. 
 
Conversely, $\can(\aav)$ is the projection of~$\psi(\aav)$ obtained by collapsing the elements of~$\FG{\PP}$ when~$\aav$ is simple, whence for every~$\aav$. Hence $\psi(\aav) = 1$ implies~$\can(\aav) = 1$.
\end{proof}

We can now complete the argument.

\begin{proof}[Proof of Proposition~\ref{P:PosBase}] 
Put $\MM:= \IM\PP$. Assume that $\aav$ is a nontrivial unital multifraction on~$\MM$. Let $(\aav^1 \wdots \aav^\pp)$ be a normal decomposition of~$\aav$ as provided by Lemma~\ref{L:SignedNormalDec}. If $\pp = 1$ holds, then $\aav$ is simple, and we are done. So assume $\pp \ge 2$. By~\eqref{E:Separ}, the equality
\begin{equation}\label{E:PosBase}
(\sr{\aav^1})\inv \opp \can(\aav^1) \opp (\tg{\aav^1}) \opp (\sr{\aav^2})\inv \opp \can(\aav^2) \opp (\tg{\aav^2}) \pdots (\sr{\aav^\pp})\inv \opp \can(\aav^\pp) \opp (\tg{\aav^\pp}) = 1
\end{equation}
holds in~$\FG{\PP} * \EG{\MM}$. If none of the elements~$\aav^\ii$ is unital, the word on the left hand side of~\eqref{E:PosBase} is already in normal form in the free product $\FG{\PP} * \EG{\MM}$, and therefore it cannot represent~$1$ in this free product, which contradicts the assumption that $\aav$ is unital. So at least one of the~$\aav^\ii$ is unital. Hence, every nontrivial unital multifraction has a piece that is both simple and unital and, therefore, every nontrivial $\piece$-minimal unital multifraction must be simple.
\end{proof}

Merging Propositions~\ref{P:Base} and~\ref{P:PosBase}, we deduce 

\begin{coro}\label{C:Base}
If $\MM$ is the interval monoid of a finite local lattice~$\PP$, then reduction is semi-convergent for~$\MM$ if and only if 
\begin{equation}\label{E:WCSimple}
\text{Every multifraction in~$\Simm\MM$ is reducible.}
\end{equation}
\end{coro}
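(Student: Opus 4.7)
\noindent
The plan is to deduce the corollary by combining Propositions~\ref{P:Base} and~\ref{P:PosBase}, after first checking that the hypotheses of these results are satisfied for $\MM = \IM\PP$. Since $\PP$ is finite, Corollary~\ref{C:FiniteNoeth} gives that $\MM$ is noetherian, and since $\PP$ is a local lattice, Proposition~\ref{P:Gcd} ensures that $\MM$ is a gcd-monoid, so both of the preceding propositions apply.

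\medskip
\noindent
For the forward direction, assume reduction is semi-convergent for~$\MM$. Any $\aav \in \Simm\MM$ lies in $\Sim\MM$, so it is unital, and its entries are proper intervals, hence distinct from~$1$, so $\aav$ is nontrivial. By Lemma~\ref{L:NC}, semi-convergence then forces $\aav$ to be reducible, establishing~\eqref{E:WCSimple}.

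\medskip
\noindent
For the converse, assume~\eqref{E:WCSimple}. By Proposition~\ref{P:Base}, to prove semi-convergence it suffices to show that every nontrivial $\piece$-minimal unital multifraction~$\aav$ on~$\MM$ is reducible. Proposition~\ref{P:PosBase} gives that $\aav$ is simple, so $\aav$ belongs to~$\Sim\MM$. Moreover, if $\bbv \piece \aav$ with $\bbv$ unital and simple, then in particular $\bbv$ is a unital multifraction and a piece of~$\aav$, contradicting $\piece$-minimality of~$\aav$ among nontrivial unital multifractions (using that a proper piece of a simple, hence nontrivial, multifraction would itself be either trivial or a strictly smaller nontrivial unital multifraction—here $\bbv$ is automatically nontrivial since it is simple). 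Hence $\aav$ is $\piece$-minimal in~$\Sim\MM$, that is, $\aav \in \Simm\MM$, and~\eqref{E:WCSimple} yields that $\aav$ is reducible.

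\medskip
\noindent
There is no real obstacle: the two nontrivial propositions doing the work are already established, and the only subtlety is the routine verification that $\piece$-minimality among all nontrivial unital multifractions is preserved when restricting to the subfamily $\Sim\MM$, which uses only the inclusion $\Sim\MM \subseteq \{\text{nontrivial unital multifractions}\}$ (the inclusion being automatic because simple multifractions have proper interval entries).
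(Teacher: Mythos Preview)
Your proof is correct and follows the same approach as the paper, which simply states that the corollary is obtained by ``merging Propositions~\ref{P:Base} and~\ref{P:PosBase}.'' You have spelled out the details that the paper leaves implicit, in particular the routine check that a $\piece$-minimal element among all nontrivial unital multifractions remains $\piece$-minimal in the subfamily~$\Sim\MM$.
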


%%%%
\subsection{Loops in the Hasse diagram}\label{SS:Circuits}

If $\PP$ is a local lattice, then the simple multifractions on the monoid~$\IM\PP$ are in one-to-one correspondence with the sequences of positive and negative proper intervals with matching ends. It is then easy to translate the notion of multifraction reducibility to the language of loops in~$\PP$, that is, in homotopical terms. 

\begin{defi}
Let $\PP$ be a poset. For $\nn \ge 1$, a \emph{positive \pup{\resp negative} $\nn$-zigzag} in~$\PP$ is a sequence $\xxv = (\xx_0 \wdots \xx_\nn)$ of elements in~$\PP$ such that $\xx_\ii < \xx_{\ii - 1}$ and $\xx_\ii < \xx_{\ii + 1}$ hold for every $\ii$ odd (\resp $\ii$ even).
The zigzag is called \emph{closed} for $\xx_\nn = \xx_0$, and \emph{simple} if $\xx_1 \wdots \xx_\nn$ are pairwise distinct.
\end{defi}

The following correspondence is then straightforward:

\begin{lemm}\label{L:Dict}
Let $\PP$ be a local lattice. Putting
\begin{equation}\label{E:Dict}
\FF((\xx_0 \wdots \xx_\nn)) := [\xx_0, \xx_1] \sdots [\xx_{\nn-1}, \xx_\nn] \quad (\resp / [\xx_0, \xx_1] \sdots [\xx_{\nn-1}, \xx_\nn])
\end{equation} 
defines a one-to-one correspondence between positive \pup{\resp negative} $\nn$-zigzags in~$\PP$ and simple positive \pup{\resp negative} $\nn$-multifrac\-tions in~$\IM\PP$. If, moreover, $\FF(\xxv)$ is unital, then $\xxv$ is closed.
\end{lemm}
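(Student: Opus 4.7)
The plan is to split the proof into three stages: (a) verify that $\FF$ is well-defined with image in simple $\nn$-multifractions, (b) exhibit an explicit inverse, and (c) derive closedness of $\xxv$ from unitality of $\FF(\xxv)$. Only the last stage uses nontrivial machinery, namely the morphism~$\psi$ from Lemma~\ref{L:Separ}; the first two are dictionary translations.

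For (a), I would unpack the zigzag condition. Each consecutive pair $\xx_{\ii-1},\xx_\ii$ is comparable with strict inequality, hence $[\xx_{\ii-1},\xx_\ii]$ is a well-defined proper interval of~$\PP$. The alternating local-minimum/local-maximum pattern of a positive zigzag matches, position by position, the matching-endpoint condition in the definition of simpleness: at a position positive in $\FF(\xxv)$ the adjacent intervals must share a common target, and at a negative position they must share a common source. Checking these equalities against the zigzag inequalities reduces to a short case analysis on the parity of the index, from which $\FF(\xxv)$ is seen to be a simple positive $\nn$-multifraction (and symmetrically for the negative case).

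For (b), the inverse would be obtained by reading off successive endpoints of~$\aav$: set $\xx_0:=\sr{\aa_1}$, and for $1\le\ii<\nn$ let $\xx_\ii$ be the endpoint common to $\aa_\ii$ and $\aa_{\ii+1}$ forced by simpleness, with the final $\xx_\nn$ being the free endpoint of $\aa_\nn$ (its target or source according to the parity of~$\nn$). The matching conditions on $\aav$ together with properness of each~$\aa_\ii$ yield the zigzag inequalities, and $\FF$ tautologically sends the resulting sequence back to~$\aav$, so the two maps are mutually inverse.

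For (c), I would invoke the morphism $\psi\colon\FRb{\IM\PP}\to\FG\PP*\EG{\IM\PP}$ of Lemma~\ref{L:Separ}. Unitality of $\FF(\xxv)$ gives $\psi(\FF(\xxv))=1$. Since $\FF(\xxv)$ is simple, formula~\eqref{E:Separ} specializes to $\psi(\FF(\xxv))=\sr{\FF(\xxv)}\inv\opp\iota(\FF(\xxv))\opp\tg{\FF(\xxv)}$, and unitality also forces $\iota(\FF(\xxv))=1$ in~$\EG{\IM\PP}$. The remaining element $\sr{\FF(\xxv)}\inv\opp\tg{\FF(\xxv)}$ lies in $\FG\PP$ and vanishes there only when $\sr{\FF(\xxv)}=\tg{\FF(\xxv)}$, which reads $\xx_0=\xx_\nn$. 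The one subtle point will be keeping the sign/parity bookkeeping straight in (a) and (b); the substantive homotopical content of (c) is already completely packaged by Lemma~\ref{L:Separ}.
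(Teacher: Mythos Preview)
Your proposal is correct and follows essentially the same approach as the paper. The paper dispatches (a) and (b) in one sentence (``directly follows from the definitions''), while you spell out the dictionary translation and the inverse explicitly; for (c) your use of $\psi$ from Lemma~\ref{L:Separ} and formula~\eqref{E:Separ} to extract $\xx_0\inv\xx_\nn=1$ in~$\FG\PP$ is exactly the paper's argument.
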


\begin{proof}
Put $\MM := \IM\PP$. The correspondence directly follows from the definitions. 

Next, assume that $\FF(\xxv)$ is unital, that is, $\can(\FF(\xxv)) = 1$ holds. By Lemma~\ref{L:Separ}, we have $\psi(\FF(\xxv)) = 1$. On the other hand, by~\eqref{E:Separ}, which is valid since $\FF(\xxv)$ is simple, we have $\psi(\FF(\xxv)) = \xx_0\inv \can(\FF(\xxv)) \xx_\nn = \xx_0\inv \xx_\nn$, whence $\xx_0 = \xx_\nn$. 
%
%Finally, if $\xxv$ is closed and $(\xx_\ii \wdots \xx_\jj)$ is a proper subsequence of~$\xxv$ that is a closed zigzag, then $\FF((\xx_\ii \wdots \xx_\jj))$ is a simple unital multifraction that is $\piece$-smaller than~$\xxv$. So, if $\FF(\xxv)$ is $\piece$-minimal in~$\Sim\MM$, then $\xxv$ must be simple.
\end{proof}

It remains to translate the definition of reducibility in the language of zigzags.

\begin{defi}\label{D:PosRed}
(See Figure~\ref{F:PosRed}.) Let $\PP$ be a poset. For $\ii < \nn$, an $\nn$-zigzag~$\xxv$ on~$\PP$ is called \emph{reducible at~$\ii$} if there exist $\yy$ in~$\PP$ satisfying\\
- for $\ii \ge 2$ with $\xx_\ii < \xx_{\ii + 1}$: $\xx_\ii < \yy \le \xx_{\ii + 1}$ and $\xx_{\ii - 1}, \yy$ have a common upper bound,\\
- for $\ii \ge 2$ with $\xx_\ii > \xx_{\ii + 1}$: $\xx_{\ii + 1} \le \yy < \xx_\ii$ and $\xx_{\ii - 1}, \yy$ admit a common lower bound,\\ 
- for $\ii = 1$ with $\xx_0 < \xx_1$: $\xx_0 \le \yy < \xx_1$ and $\xx_2 \le \yy < \xx_1$,\\ 
- for $\ii = 1$ with $\xx_0 > \xx_1$: $\xx_1 < \yy \le \xx_0$ and $\xx_1 < \yy \le \xx_2$.\\
We say that $\xxv$ is \emph{reducible} if it is reducible at at least one level~$\ii$. 
\end{defi}

\begin{figure}[htb]
\def\NPoint(#1,#2,#3){\cnode[style=thin,fillcolor=white,linecolor=white](#1,#2){0}{#3}}
\def\WPoint(#1,#2,#3){\cnode[style=thin,fillcolor=white,fillstyle=solid](#1,#2){0.5}{#3}}
\def\AArrow(#1,#2){\ncline[nodesep=1mm, linewidth=0.8pt, border=2pt]{->}{#1}{#2}}
\def\PArrow(#1,#2){\ncline[linestyle=dashed,linewidth=0.8pt, border=1.2pt]{->}{#1}{#2}}
\begin{picture}(37,30)(0,-4)
\psset{nodesep=0.7mm}
\psset{unit=1.2mm}
%\psset{xunit=0.7mm,yunit=0.75mm}
\put(-5,24){case $\ii= 1$}
\put(-3,20){with $\xx_0 < \xx_1$:}
\WPoint(0,0,x0)\nput{270}{x0}{$\xx_0$}
\WPoint(10,5,y)\nput{270}{y}{$\yy$}
\WPoint(10,10,x1)\nput{90}{x1}{$\xx_1$}
\WPoint(20,0,x2)\nput{270}{x2}{$\xx_2$}
\NPoint(25,5,C)
\AArrow(x0,x1)
\AArrow(x2,x1)
\AArrow(x2,C)
\PArrow(x0,y)
\PArrow(x2,y)
\PArrow(y,x1)
\end{picture}
\begin{picture}(42,15)(0,-2)
\psset{nodesep=0.7mm}
\psset{unit=1.2mm}
%\psset{xunit=0.7mm,yunit=0.75mm}
\put(-5,26){case $\ii \ge 2$}
\put(-3,22){with $\xx_\ii < \xx_{\ii + 1}$:}
\NPoint(0,5,A)
\WPoint(5,10,xi-1)\nput{135}{xi-1}{$\xx_{\ii - 1}$}
\WPoint(10,15,B)
\WPoint(15,0,xi)\nput{270}{xi}{$\xx_\ii$}
\WPoint(20,5,y)\nput{315}{y}{$\yy$}
\WPoint(25,10,xi+1)\nput{90}{xi+1}{$\xx_{\ii +1}$}
\NPoint(30,5,C)
\AArrow(A,xi-1)
\AArrow(xi,xi-1)
\AArrow(xi,y)
\AArrow(y,xi+1)
\AArrow(C,xi+1)
\PArrow(xi-1,B)
\PArrow(y,B)
\end{picture}
\begin{picture}(35,15)(0,-6)
\psset{nodesep=0.7mm}
\psset{unit=1.2mm}
%\psset{xunit=0.7mm,yunit=0.75mm}
\put(-5,22){case $\ii \ge 2$}
\put(-3,18){with $\xx_{\ii + 1} < \xx_\ii$:}
\NPoint(0,5,A)
\WPoint(5,0,xi-1)\nput{225}{xi-1}{$\xx_{\ii - 1}$}
\WPoint(10,-5,B)
\WPoint(15,10,xi)\nput{90}{xi}{$\xx_\ii$}
\WPoint(20,5,y)\nput{45}{y}{$\yy$}
\WPoint(25,0,xi+1)\nput{270}{xi+1}{$\xx_{\ii +1}$}
\NPoint(30,5,C)
\AArrow(A,xi-1)
\AArrow(xi-1,xi)
\AArrow(y,xi)
\AArrow(xi+1,y)
\AArrow(xi+1,C)
\PArrow(B,xi-1)
\PArrow(B,y)
\end{picture}
\caption{\sf Zigzag reducibility: finding a vertex~$\yy$ between~$\xx_\ii$ and~$\xx_{\ii + 1}$ that admits a common upper/lower bound with~$\xx_{\ii - 1}$.}
\label{F:PosRed}
\end{figure}

Comparing Definitions~\ref{D:Redpm} and~\ref{D:PosRed} immediately gives:

\begin{lemm}\label{L:Dict2}
For every local lattice~$\PP$, a zigzag~$\xxv$ on~$\PP$ is reducible if and only if the multifraction~$\FF(\xxv)$ is reducible.
\end{lemm}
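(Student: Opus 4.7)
The plan is a routine case analysis matching Definition~\ref{D:Redpm} with Definition~\ref{D:PosRed}, level by level. For a zigzag~$\xxv$, the corresponding multifraction $\aav := \FF(\xxv)$ is simple, and each entry~$\aa_\kk$ is the proper interval between the consecutive zigzag vertices~$\xx_{\kk-1}$ and~$\xx_\kk$, with the sign pattern encoding the direction of traversal.

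Two translation tools carry out the proof. First, by Lemma~\ref{L:Div}\ITEM1, an element of~$\IM\PP$ that left-divides (\resp right-divides) a proper interval~$[\uu, \vv]$ must itself be an interval $[\uu, \yy]$ (\resp $[\yy, \vv]$) with $\uu \le \yy \le \vv$, and the nontriviality condition $\xx \not= 1$ is equivalent to strict inequality at the free endpoint. Second, since $\PP$ is a local lattice, Proposition~\ref{P:Gcd} ensures that~$\IM\PP$ is a gcd-monoid, so two intervals sharing a source (\resp a target) admit a right lcm~$\lcm$ (\resp a left lcm~$\lcmt$) in~$\IM\PP$ if and only if their respective targets (\resp sources) admit a common upper bound (\resp lower bound) in~$\PP$.

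Each of the four cases of Definition~\ref{D:Redpm} (namely $\ii = 1$ or $\ii \ge 2$, with~$\ii$ positive or negative in~$\aav$) then corresponds to exactly one of the four cases of Definition~\ref{D:PosRed}. As a representative illustration, take $\ii \ge 2$ negative in~$\aav$, for which the zigzag satisfies $\xx_\ii < \xx_{\ii+1}$ and the entries $\aa_{\ii+1} = [\xx_\ii, \xx_{\ii+1}]$ and $\aa_\ii = [\xx_\ii, \xx_{\ii-1}]$ share their source~$\xx_\ii$. Applying $\Red{\ii, \xx}$ with $\xx \not= 1$ amounts to choosing some~$\yy$ with $\xx_\ii < \yy \le \xx_{\ii+1}$ (via $\xx \bb_{\ii+1} = \aa_{\ii+1}$, forcing $\xx = [\xx_\ii, \yy]$) such that $\xx \lcm \aa_\ii$ exists; by the second translation, the existence of this right lcm is equivalent to~$\yy$ and~$\xx_{\ii-1}$ admitting a common upper bound in~$\PP$, which is precisely the zigzag-reducibility condition at~$\ii$ for this case. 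The remaining factorization $\bb_{\ii-1} = \aa_{\ii-1} \xx'$ is then automatic from the lcm data. The three remaining cases (with~$\ii$ positive, and both signs at~$\ii = 1$) follow by the same reading, exchanging left/right divisibility and $\lcm$/$\lcmt$ according to the sign of~$\ii$.

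The only point that warrants attention is the equivalence between the existence of the relevant lcm in~$\IM\PP$ and the existence of a common upper or lower bound in~$\PP$; this is the mildly nontrivial ingredient that ties the poset-level criterion of Definition~\ref{D:PosRed} to the algebraic reducibility notion of Definition~\ref{D:Redpm}, and it rests squarely on the local lattice assumption.
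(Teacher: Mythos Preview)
Your proof is correct and follows exactly the approach the paper intends: the paper's own proof is the single line ``Comparing Definitions~\ref{D:Redpm} and~\ref{D:PosRed} immediately gives'', and you have simply spelled out that comparison, correctly identifying the two translation tools (divisors of intervals are intervals via Lemma~\ref{L:Div}, and existence of the relevant lcm amounts to existence of a common bound in~$\PP$) and verifying one representative case.
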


Putting things together, we obtain a sufficient condition for the semi-convergence of reduction: 

\begin{prop}\label{P:PosNC}
If $\PP$ is a finite local lattice and every simple closed zigzag in~$\PP$ is reducible, then reduction is semi-convergent for~$\IM\PP$. 
\end{prop}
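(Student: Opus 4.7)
The strategy is as follows. Since $\PP$ is finite, Corollary~\ref{C:FiniteNoeth} shows that $\MM := \IM\PP$ is noetherian; since $\PP$ is a local lattice, Proposition~\ref{P:Gcd} shows that $\MM$ is a gcd-monoid. Hence Corollary~\ref{C:Base} applies: the claim reduces to showing that every multifraction in $\Simm\MM$ is reducible. Fix such an $\aav$: by Lemma~\ref{L:Dict}, $\aav = \FF(\xxv)$ for some closed $\nn$-zigzag $\xxv = (\xx_0, \dots, \xx_\nn)$ in $\PP$; by Lemma~\ref{L:Dict2}, the reducibility of $\aav$ is equivalent to that of $\xxv$. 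So it suffices to exhibit $\xxv$ as reducible.

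The plan is to show that $\xxv$ is a simple zigzag, i.e., the vertices $\xx_1, \dots, \xx_\nn$ are pairwise distinct, at which point the hypothesis directly yields reducibility. Assume for contradiction that $\xx_\ii = \xx_\jj$ for some $1 \le \ii < \jj \le \nn$. Consider the sub-multifraction $\bbv := (\aa_{\ii+1}, \dots, \aa_\jj)$: it is a proper piece of $\aav$ (since $\ii \ge 1$ makes the prefix $(\aa_1, \dots, \aa_\ii)$ nontrivial), it is simple as a contiguous sub-sequence of a simple multifraction, and $\sr\bbv = \xx_\ii = \xx_\jj = \tg\bbv$, so that $\bbv$ corresponds under $\FF$ to the closed sub-zigzag $(\xx_\ii, \dots, \xx_\jj)$. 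If one shows that $\bbv$ is unital, then $\bbv \in \Sim\MM$ is a proper piece of $\aav$, contradicting the $\piece$-minimality of $\aav$, whence $\xxv$ must be simple.

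Showing that $\bbv$ must be unital is the hard step. The approach is to analyse the factorization $\aav = \aav' \opp \bbv \opp \aav''$ with $\aav' := (\aa_1, \dots, \aa_\ii)$ and $\aav'' := (\aa_{\jj+1}, \dots, \aa_\nn)$ through the morphism $\psi$ of Lemma~\ref{L:Separ}: the identity $\psi(\aav) = 1$ in the free product $\FG\PP \ast \EG\MM$ expands, after the intermediate $\FG\PP$-factors cancel thanks to $\xx_\ii = \xx_\jj$, into the relation $\iota(\aav') \iota(\bbv) \iota(\aav'') = 1$ in $\EG\MM$. By choosing the pair $(\ii, \jj)$ judiciously---for instance, with $\jj - \ii$ minimal among all repetitions so that the associated sub-zigzag $(\xx_\ii, \dots, \xx_\jj)$ is itself simple---and combining with the $\piece$-minimality of $\aav$, one should be able to force $\iota(\bbv) = 1$. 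Once $\xxv$ is known to be simple, the hypothesis delivers its reducibility, which Lemma~\ref{L:Dict2} transfers to $\aav$, completing the proof.
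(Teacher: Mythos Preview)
Your overall setup (reducing via Corollary~\ref{C:Base}, then passing through the zigzag correspondence of Lemmas~\ref{L:Dict} and~\ref{L:Dict2}) is fine, but the core step---showing that the zigzag~$\xxv$ attached to a $\piece$-minimal $\aav\in\Sim\MM$ must be simple---has a genuine gap. Your sketch for proving $\iota(\bbv)=1$ via~$\psi$ does not work: once the $\FG\PP$-factors cancel, the identity $\psi(\aav)=1$ collapses exactly to $\iota(\aav')\iota(\bbv)\iota(\aav'')=1$, which is nothing but $\iota(\aav)=1$, already known. Nothing in the free-product structure isolates the factor~$\iota(\bbv)$, and $\piece$-minimality gives no leverage here because~$\bbv$ lies in~$\Sim\MM$ only if it is unital, which is precisely what you are trying to prove. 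In fact, the Remark after Corollary~\ref{C:PosNCn} points out that a closed zigzag need not yield a unital multifraction (the bowtie example), so there is no reason to expect $\bbv$ to be unital in general.

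The paper's argument avoids this trap by not trying to show that~$\xxv$ is simple. Instead, it takes a shortest closed sub-zigzag~$\xxv'$ of~$\xxv$ with at least two vertices; minimality of length forces~$\xxv'$ to be simple, so the hypothesis gives that~$\xxv'$ is reducible. The point you are missing is that reducibility of a zigzag (Definition~\ref{D:PosRed}) is a \emph{local} condition on two or three consecutive vertices, so reducibility of the contiguous sub-zigzag~$\xxv'$ immediately implies reducibility of~$\xxv$ at the corresponding position (the special case $k=1$ in~$\xxv'$ becomes an instance of the generic case $\ge 2$ in~$\xxv$, and the witness~$\yy$ still works). Thus $\piece$-minimality of~$\aav$ is never actually used beyond the setup; the argument shows every element of~$\Sim\MM$ is reducible. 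Ironically, your own suggestion to choose $(\ii,\jj)$ with $\jj-\ii$ minimal already produces such a simple closed sub-zigzag---you simply need to apply the hypothesis to it directly and transfer reducibility upward, rather than chase unitality of~$\bbv$.
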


\begin{proof}
Let~$\aav$ be a $\piece$-minimal element of~$\Sim\MM$ (thus necessarily nontrivial). By Lemma~\ref{L:Dict}, there exists a closed zigzag~$\xxv$ in~$\PP$ satisfying~$\FF(\xxv) = \aav$. Let $\xxv'$ be a shortest closed subsequence of~$\xxv$ containing two vertices at least, which exists since $\xxv$ itself contains at least two vertices. Then $\xxv'$ is a simple closed zigzag. Hence, by assumption, it is reducible, and, therefore, so is~$\xxv$. By Lemma~\ref{L:Dict2}, it follows that $\aav$ is reducible. By Corollary~\ref{C:Base}, this implies that reduction is semi-convergent for~$\IM\PP$. 
\end{proof}

Note that, if $\PP$ is a finite poset, then the family of all simple closed zigzags on~$\PP$ is finite, since a zigzag longer than the cardinality of~$\PP$ cannot be simple.

Restricting to $\nn$-zigzags amounts to restricting to $\nn$-multifractions, and we obtain the following local version of Proposition~\ref{P:PosNC}:

\begin{coro}\label{C:PosNCn}
If $\PP$ is a finite local lattice and every simple closed $\pp$-zigzag in~$\PP$ with $\pp \le \nn$ is reducible, then reduction is $\nn$-semi-convergent for~$\IM\PP$. 
\end{coro}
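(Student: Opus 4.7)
The plan is to imitate the proof of Proposition~\ref{P:PosNC} while tracking depth at every step, so that the assumption on $\pp$-zigzags with $\pp \le \nn$ is enough. All of the ingredients are already available; the task is only to verify that each reduction in the chain respects the depth bound.

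First, by Proposition~\ref{P:BaseNC}, it suffices to prove that every nontrivial $\piece$-minimal unital multifraction~$\aav$ on $\MM := \IM\PP$ of depth $\dh\aav \le \nn$ is reducible. Fix such an~$\aav$. Since $\piece$ is the factor relation on~$\FRb\MM$, the $\piece$-minimality of $\aav$ among unital multifractions forces $\aav$ to be $\piece$-minimal among all unital multifractions (irrespective of depth), so Proposition~\ref{P:PosBase} applies and yields that $\aav$ is simple. Here it is crucial that $\aav \piece \bbv$ implies $\dh\aav \le \dh\bbv$, so that the argument of Proposition~\ref{P:PosBase} does not introduce multifractions of larger depth.

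Next, by Lemma~\ref{L:Dict}, write $\aav = \FF(\xxv)$ for a zigzag~$\xxv$ on~$\PP$, which, since $\aav$ is unital, is closed; moreover the length of~$\xxv$ equals $\dh\aav \le \nn$. Exactly as in Proposition~\ref{P:PosNC}, extract from~$\xxv$ a shortest closed subsequence $\xxv'$ containing at least two vertices (it exists since $\xxv$ itself qualifies). Then $\xxv'$ is a simple closed $\pp$-zigzag, and by construction $\pp \le \nn$. The hypothesis of the corollary yields that $\xxv'$ is reducible; and reducibility at some level of a subsequence of~$\xxv$ immediately entails reducibility of~$\xxv$ at the corresponding level. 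By Lemma~\ref{L:Dict2}, $\aav = \FF(\xxv)$ is reducible, completing the proof.

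There is no real obstacle here: the statement is exactly the depth-bounded version of Proposition~\ref{P:PosNC}, and the proof is the same up to replacing Proposition~\ref{P:Base} by Proposition~\ref{P:BaseNC} and verifying the two elementary depth-tracking remarks above (that $\piece$ is depth-monotonic, and that passing to a subsequence of a zigzag cannot lengthen it).
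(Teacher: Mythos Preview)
Your proof is correct and follows essentially the same approach as the paper, which does not give a separate argument but merely states that the corollary is the depth-restricted version of Proposition~\ref{P:PosNC}. You have filled in exactly the two verifications that make the restriction work: that Proposition~\ref{P:BaseNC} replaces Proposition~\ref{P:Base} (equivalently, Corollary~\ref{C:Base}), and that neither the application of Proposition~\ref{P:PosBase} nor the passage to a shortest closed sub-zigzag increases depth.
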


\begin{rema}
By Lemma~\ref{L:Dict}, every simple multifraction in a monoid~$\IM\PP$ arises from a closed zigzag of~$\PP$. But, conversely, a closed zigzag in~$\PP$ need not induce a unital multifraction: typically, if $\PP$ is a bowtie $\{\xx_1 \wdots \xx_4\}$ with $\xx_1, \xx_3 \le \xx_2, \xx_4$, the monoid~$\IM\PP$ is free, and the multifraction $[\xx_1, \xx_2] / [\xx_3, \xx_2] / [\xx_3, \xx_4] / [\xx_1, \xx_4]$ associated with the closed zigzag $(\xx_1,\xx_2,\xx_3,\xx_4,\xx_1)$ is not unital. In fact, the correspondence is one-to-one if and only if the poset~$\PP$ is simply connected, meaning that every loop in~$\PP$ is homotopic to a point, with homotopy defined as adding or removing a pattern $(\xx, \yy, \xx)$ or interchanging $(\xx, \yy, \zz)$ and~$(\xx, \zz)$ for $\xx < \yy < \zz$~(see, for example, \cite{Rotman}). When this condition holds, the sufficient condition of Proposition~\ref{P:PosNC} is also necessary and, as a consequence, the semi-convergence of reduction for~$\IM\PP$ is a decidable property in case $\PP$ is finite and simply connected.
\end{rema}

%%%%
\section{Examples and counter-examples}\label{S:Ex}

The criteria of Proposition~\ref{P:PosNC} and Corollary~\ref{C:PosNCn} enable us to construct explicit examples for various possible behaviours of multifraction reduction. Here we successively describe examples of monoids~$\MM$ for which reduction is semi-convergent, and for which it is $\pp$-semi-convergent for $\pp < \nn$ but not $\nn$-semi-convergent.

%%%%
\subsection{Sufficient conditions}

Owing to Proposition~\ref{P:PosNC}, in order to obtain monoids for which reduction is semi-convergent, it suffices to find local lattices in which all simple closed zigzags are reducible. The question may be difficult in general, but finding sufficient conditions is easy. We begin with a general observation.

\begin{lemm}\label{L:Red3}
For every poset~$\PP$, and for $\nn \le 3$, every simple closed $\nn$-zigzag is reducible.
\end{lemm}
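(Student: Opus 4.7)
The plan is to verify Definition~\ref{D:PosRed} at level $\ii = 1$ with the choice $\yy = \xx_0$, which equals $\xx_\nn$ by closedness.

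First I would handle $\nn = 1$: any $1$-zigzag $(\xx_0, \xx_1)$ carries the strict inequality $\xx_0 \ne \xx_1$ built into the zigzag definition, contradicting the closedness condition $\xx_1 = \xx_0$; so no simple closed $1$-zigzag exists, and the statement is vacuous.

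For $\nn \in \{2, 3\}$, I would treat the positive case ($\xx_0 > \xx_1$) and the negative case ($\xx_0 < \xx_1$) symmetrically. In the positive case, Definition~\ref{D:PosRed} demands $\yy$ with $\xx_1 < \yy \le \xx_0$ and $\xx_1 < \yy \le \xx_2$; the choice $\yy = \xx_0$ trivially satisfies the first conjunct and the first half of the second, reducing everything to the inequality $\xx_0 \le \xx_2$. For $\nn = 2$ this is an equality by closedness ($\xx_2 = \xx_0$), whereas for $\nn = 3$ the zigzag condition at the odd index $\ii = 3$ gives $\xx_3 < \xx_2$, and $\xx_3 = \xx_0$ from closedness then yields $\xx_0 < \xx_2$. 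The negative case is symmetric: with $\yy = \xx_0$ the nontrivial requirement becomes $\xx_2 \le \xx_0$, an equality when $\nn = 2$, and a consequence of $\xx_2 < \xx_3 = \xx_0$ (zigzag condition at the even index $\ii = 2$) when $\nn = 3$.

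No real obstacle arises: once the ``magic'' choice $\yy = \xx_0$ is identified, every required inequality comes directly from the zigzag and closedness hypotheses, with no hidden difficulty. Conceptually, the reason this easy argument breaks down starting at $\nn = 4$---making room for the negative examples announced in Proposition~C---is that a closed $4$-zigzag $(\xx_0, \xx_1, \xx_2, \xx_3, \xx_0)$ only forces the four local inequalities $\xx_1 < \xx_0$, $\xx_1 < \xx_2$, $\xx_3 < \xx_2$, $\xx_3 < \xx_0$ and leaves $\xx_0$ and $\xx_2$ incomparable in general, so $\yy = \xx_0$ no longer satisfies the required comparability with $\xx_2$.
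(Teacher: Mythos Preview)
Your proof is correct and essentially coincides with the paper's: for $n=2$ the paper also takes $y=x_0$, and for $n=3$ the paper takes $y=x_3$ at level~$1$, which is your $y=x_0$ in disguise since $x_3=x_0$ by closedness. The only difference is that the paper additionally notes that $y=x_1$ witnesses reducibility at level~$2$ for $n=3$, but this is superfluous for the lemma, and your closing remark on why the argument breaks at $n=4$ is a useful addition the paper does not make explicit here.
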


\noindent\begin{minipage}{\linewidth}
\begin{proof}
\rightskip40mm\VR(4,0) By definition, a $1$-zigzag is never simple. If $(\xx_0, \xx_1, \xx_2)$ is a simple closed $2$-zigzag, we have $\xx_1 \not= \xx_0 =\nobreak \xx_2$, and taking $\yy = \xx_0$ witnesses for reducibility. Finally, assume that $(\xx_0 \wdots \xx_3)$ is a simple closed $3$-zigzag. Assume for instance $\xx_0 < \xx_1$. Then taking $\yy = \xx_3$ witnesses for reducibility at~$1$, and taking $\yy = \xx_1$ witnesses for reducibility at~$2$, see the right hand side diagram. The picture is symmetric for~$\xx_0 > \xx_1$.\hfill
\begin{picture}(0,0)(-22,-8)
\def\NPoint(#1,#2,#3){\cnode[style=thin,fillcolor=white,linecolor=white](#1,#2){0}{#3}}
\def\WPoint(#1,#2,#3){\cnode[style=thin,fillcolor=white,fillstyle=solid](#1,#2){0.5}{#3}}
\def\AArrow(#1,#2){\ncline[nodesep=1mm, linewidth=0.8pt, border=2pt]{->}{#1}{#2}}
\def\PArrow(#1,#2){\ncline[linestyle=dashed,linewidth=0.8pt, border=1.2pt]{->}{#1}{#2}}
\def\DArrow(#1,#2){\ncline[style=double]{#1}{#2}}
\psset{nodesep=0.7mm}
\psset{xunit=1mm,yunit=1.3mm}
\WPoint(0,0,x0)\nput{270}{x0}{$\xx_0$}
\WPoint(10,3,y)\nput{270}{y}{$\xx_3$}
\WPoint(10,10,x1)\nput{110}{x1}{$\xx_1$}
\WPoint(20,0,x2)\nput{270}{x2}{$\xx_2$}
\WPoint(30,10,x3)\nput{75}{x3}{$\xx_3$}
\WPoint(20,15,z)\nput{110}{z}{$\xx_1$}
\AArrow(x0,x1)
\AArrow(x2,x1)
\AArrow(x2,x3)
\DArrow(x0,y)
\PArrow(x2,y)
\PArrow(y,x1)
\DArrow(x1,z)
\PArrow(x3,z)
\end{picture}
\end{proof}
\end{minipage}

\medskip So only simple closed zigzags of length~$4$ and above need to be considered. The semi-convergence question may be complicated in general. However, simple conditions turn out to be sufficient.

\begin{lemm}\label{L:SuffNC1}
Assume that $\PP$ is a finite local lattice satisfying
\begin{equation}\label{E:SuffNC1}
\parbox{115mm}{Any two elements with a common upper bound have a common lower bound.} 
\end{equation}
Then reduction is semi-convergent for~$\IM\PP$. 
\end{lemm}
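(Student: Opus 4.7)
The plan is to apply Proposition~\ref{P:PosNC}: it suffices to show that every simple closed zigzag $\xxv = (\xx_0 \wdots \xx_\nn)$ in~$\PP$ is reducible. Lemma~\ref{L:Red3} already disposes of the case $\nn \le 3$, so I focus on $\nn \ge 4$ and aim to exhibit reducibility at level~$2$ or level~$3$, depending on the orientation of the initial step.

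The driving observation is that, at any level $\ii \ge 2$ whose local orientation is $\xx_\ii > \xx_{\ii + 1}$, Definition~\ref{D:PosRed} asks for a $\yy$ satisfying $\xx_{\ii + 1} \le \yy < \xx_\ii$ and sharing a common lower bound with $\xx_{\ii - 1}$. Taking $\yy := \xx_{\ii + 1}$ makes the chain $\xx_{\ii + 1} \le \yy < \xx_\ii$ automatic, the strictness coming from simplicity of the zigzag, and reduces the task to producing a common lower bound for $\xx_{\ii - 1}$ and $\xx_{\ii + 1}$. But the zigzag pattern forces both of these elements to lie below~$\xx_\ii$, so $\xx_\ii$ is already a common \emph{upper} bound for them, and assumption~\eqref{E:SuffNC1} supplies the missing common lower bound.

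It then remains only to select the level. When $\xx_0 < \xx_1$, the alternation yields $\xx_3 > \xx_4$, so level $\ii = 3$ works; here one must note that for $\nn = 4$ the vertex $\xx_4$ equals $\xx_0$, but $\xx_0 \ne \xx_3$ still follows from the simplicity clause ``$\xx_1 \wdots \xx_\nn$ pairwise distinct''. When $\xx_0 > \xx_1$, the alternation yields $\xx_2 > \xx_3$, so level $\ii = 2$ works by the same argument. Either way, Proposition~\ref{P:PosNC} concludes.

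The main subtlety I had to navigate is that~\eqref{E:SuffNC1} is asymmetric: it promotes common upper bounds to common lower bounds but not conversely. In particular, one cannot blindly try to reduce at level~$1$, because the required witness there would demand $\xx_0 \lcm \xx_2 < \xx_1$ in $\INF\PP{\xx_1}$, and \eqref{E:SuffNC1} does not preclude the lcm from being equal to~$\xx_1$. This is exactly why the argument must move to a ``down step'' inside the first four vertices, where the reducibility condition asks for the favourable direction of~\eqref{E:SuffNC1} and no finer combinatorial analysis is required.
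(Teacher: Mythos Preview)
Your proof is correct and follows essentially the same approach as the paper: invoke Lemma~\ref{L:Red3} for $\nn\le 3$, and for $\nn\ge 4$ pick the first local peak ($\ii=3$ when $\xx_0<\xx_1$, $\ii=2$ when $\xx_0>\xx_1$), then use~\eqref{E:SuffNC1} on $\xx_{\ii-1}$ and $\xx_{\ii+1}$, which share the upper bound~$\xx_\ii$. Your explicit choice $\yy:=\xx_{\ii+1}$ and the check of strictness via simplicity are if anything cleaner than the paper's wording, which calls the common lower bound itself the witness~$\yy$.
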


\begin{proof}
We claim that every simple closed zigzag on~$\PP$ is reducible. Indeed, assume that $\xxv$ is a simple closed $\nn$-zigzag on~$\PP$. For $\nn \le 3$, $\xxv$ is reducible by Lemma~\ref{L:Red3}. Assume $\nn \ge 4$. Then there exists~$\ii$ with $2 \le \ii < \nn$ such that we have $\xx_{\ii - 1} < \xx_\ii$ and $\xx_{\ii + 1}Ê< \xx_\ii$, namely $\ii = 3$ in case $\xx_0 < \xx_1$, and $\ii = 2$ in case $\xx_0 > \xx_1$. By~\eqref{E:SuffNC1}, $\xx_{\ii - 1}$ and~$\xx_{\ii + 1}$, which admit the common upper bound~$\xx_\ii$, admit a common lower bound~$\yy$, and the latter witnesses for reducibility at~$\ii$. We conclude using Proposition~\ref{P:PosNC}.
\end{proof}

\begin{lemm}\label{L:SuffNC2}
Assume that $\PP$ is a finite local lattice satisfying
\begin{equation}\label{E:SuffNC2}
\parbox{115mm}{Every simple closed zigzag~$\xxv$ in~$\PP$ admits an interpolation center, meaning a vertex~$\yy$ such that $\xx_\ii \le \yy \le \xx_\jj$ holds for all~$\xx_\ii, \xx_\jj$ satisfying $\xx_\ii < \xx_\jj$.} 
\end{equation}
Then reduction is semi-convergent for~$\IM\PP$.
\end{lemm}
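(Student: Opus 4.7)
The plan is to invoke Proposition~\ref{P:PosNC}, reducing the statement to showing that under hypothesis~\eqref{E:SuffNC2} every simple closed zigzag in~$\PP$ is reducible. Lemma~\ref{L:Red3} takes care of zigzags of length~$\nn\le 3$, and the argument for negative zigzags is dual to that for positive ones, so I may focus on a simple closed positive $\nn$-zigzag $\xxv=(\xx_0\wdots\xx_\nn)$ with $\nn\ge 4$. The zigzag definition forces every odd-indexed vertex $\xx_\kk$ with $1\le \kk\le\nn-1$ to be \emph{low} (strictly below both neighbours) and every such $\xx_\kk$ with $\kk$ even to be \emph{high} (strictly above both neighbours); simplicity together with $\nn\ge 4$ produces at least two distinct low vertices, $\xx_1$ and $\xx_3$ being the canonical ones.

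Next, let~$\yy$ be an interpolation center for~$\xxv$ as provided by~\eqref{E:SuffNC2}. Applying the defining interpolation inequalities edge by edge yields $\xx_\kk\le\yy$ for every low vertex~$\xx_\kk$ and $\yy\le\xx_\kk$ for every high vertex~$\xx_\kk$, so~$\yy$ serves simultaneously as a common upper bound of all lows and as a common lower bound of all highs. The decisive combinatorial remark is that $\yy$ cannot equal every low vertex at once, for the at-least-two distinct lows would then collapse to a single value, contradicting simplicity. Hence there exists an odd~$\kk$ with $1\le\kk\le\nn-1$ satisfying $\xx_\kk<\yy$ strictly.

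To finish, I would verify that $\xxv$ is reducible at~$\kk$ with witness $\yy':=\yy$ by consulting the relevant cases of Definition~\ref{D:PosRed}. If $\kk=1$, then $\xx_0>\xx_1$, and the required inequalities $\xx_1<\yy\le\xx_0$ and $\xx_1<\yy\le\xx_2$ follow from the strict inequality $\xx_1<\yy$ together with $\yy\le\xx_0$ and $\yy\le\xx_2$, both~$\xx_0$ and~$\xx_2$ being high. If $\kk\ge 3$, the edge $(\xx_\kk,\xx_{\kk+1})$ is directed upward, the conditions $\xx_\kk<\yy\le\xx_{\kk+1}$ hold for the same reason, and for a common upper bound of~$\xx_{\kk-1}$ and~$\yy$ one may take $\xx_{\kk-1}$ itself, since $\yy\le\xx_{\kk-1}$ by the highness of~$\xx_{\kk-1}$. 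Thus $\xxv$ is reducible in both cases, and Proposition~\ref{P:PosNC} delivers semi-convergence for~$\IM\PP$. I foresee no genuine obstacle: the whole argument rests on the single observation that an interpolation center sandwiches all highs above all lows, and the only mild point requiring care is the boundary split between $\kk=1$ and $\kk\ge 3$ imposed by Definition~\ref{D:PosRed}.
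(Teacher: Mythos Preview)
Your argument is correct and follows essentially the same route as the paper's: both reduce to Proposition~\ref{P:PosNC}, dispose of short zigzags via Lemma~\ref{L:Red3}, and for $\nn\ge4$ use the interpolation center~$\yy$ directly as a reducibility witness. The only difference is cosmetic: the paper fixes attention on the single vertex~$\xx_1$, arguing reducibility at level~$1$ when $\yy\ne\xx_1$ and at level~$2$ when $\yy=\xx_1$, whereas you argue more uniformly by locating any low vertex~$\xx_\kk$ strictly below~$\yy$ (such~$\kk$ exists by simplicity, exactly as you note) and reducing at~$\kk$.
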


\begin{proof}
Assume that $\xxv$ is a positive simple closed $\nn$-zigzag with $\nn \ge 4$. If $\yy \neq \xx_1$, then~$\yy$ witnesses for reducibility at~$1$ since we have $\xx_0 \le \yy \le \xx_1$ and $\xx_2 \le \yy$. Assume $\yy = \xx_1$. Then $\xx_2 < \xx_3$ implies $\xx_2 \le \yy \le \xx_3$, and therefore $\xx_1$ and~$\xx_3$ admit a common upper bound, which implies reducibility at~$2$. The argument is symmetric for a negative zigzag. We conclude using Proposition~\ref{P:PosNC} again.
\end{proof}

\noindent\begin{minipage}{\textwidth}
\rightskip40mm\hspace{3.5mm}
For $\nn = 4$, the existence of an interpolation center corresponds to the usual interpolation property~$(IP)$.
We observed in Remark~\ref{R:Interpol} that, in every gcd-monoid, the left divisibility relation satisfies~$(IP)$. Saying that $\PP$ itself satisfied~$(IP)$ is a stronger hypothesis: $(IP)$ for divisibility in~$\IM\PP$ amounts to restricting to diagrams where $\xx_1$ and~$\xx_3$ admit a common lower bound.\hfill
\begin{picture}(0,0)(-12,-2)
\psset{nodesep=1.5mm}
\psset{yunit=1.2mm}
\def\PArrow(#1,#2){\ncline[linestyle=dashed, nodesep=1mm, linewidth=0.8pt]{->}{#1}{#2}}
\WPoint(0,0,x1)\nput{180}{x1}{$\xx_1$}
\WPoint(20,0,x2)\nput{0}{x2}{$\xx_3$}
\WPoint(0,20,y1)\nput{180}{y1}{$\xx_2$}
\WPoint(20,20,y2)\nput{0}{y2}{$\xx_4$}
\WPoint(10,15,z)\nput{90}{z}{$\zz$}
\PArrow(x1,z) \PArrow(x2,z) \PArrow(z,y1) \PArrow(z,y2)
\AArrow(x2,y1) \AArrow(x1,y1) \AArrow(x1,y2) \AArrow(x2,y2)
\end{picture}
\end{minipage}

%%%%
\subsection{Examples where reduction is semi-convergent}

Finding explicit examples, and therefore establishing Proposition~A of the introduction, is then easy.

\begin{prop}\label{P:PropA}
Let $\PPA$ be the 7-element poset whose Hasse diagram is depicted in Figure~\ref{F:PropA}, and let $\MMA$ be the associated interval monoid. Then $\MMA$ is a noetherian gcd-monoid for which reduction is semi-convergent but not convergent.
\end{prop}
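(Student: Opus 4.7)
The plan unfolds in three stages, matching the three assertions of the statement.

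First, to establish that $\MMA$ is a noetherian gcd-monoid, it suffices by Proposition~\ref{P:Gcd} and Corollary~\ref{C:FiniteNoeth} to verify that the $7$-element poset~$\PPA$ is a local lattice, \ie, that for each vertex $\xx$ the upper cone $\SUP\PPA\xx$ is a meet-semilattice and the lower cone $\INF\PPA\xx$ is a join-semilattice. As $\PPA$ is explicit and small, this is a direct inspection of its Hasse diagram.

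Second, for semi-convergence I would apply Proposition~\ref{P:PosNC}: it suffices to prove that every simple closed zigzag in $\PPA$ is reducible. By Lemma~\ref{L:Red3} only zigzags of length~$\ge 4$ need to be checked, and since $\PPA$ has seven vertices the list of simple closed zigzags to inspect is finite. The natural strategy is to invoke one of the sufficient conditions of Section~5.1, ideally condition~\eqref{E:SuffNC1} of Lemma~\ref{L:SuffNC1} (any two elements with a common upper bound also have a common lower bound). If $\PPA$ is designed so that~\eqref{E:SuffNC1} or its dual holds, reducibility of every simple closed zigzag of length $\ge 4$ is automatic; otherwise, one falls back to the interpolation-center criterion~\eqref{E:SuffNC2}, or ultimately to a case-by-case verification, reducing each zigzag at an appropriate level using Definition~\ref{D:PosRed}.

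Third, to show that reduction is \emph{not} convergent, I would recall from Section~\ref{SS:Reduction} that convergence of reduction for~$\MMA$ is equivalent to $\MMA$ satisfying the $3$-Ore condition: any three elements pairwise admitting common right (\resp left) multiples must admit a global common right (\resp left) multiple. Via Lemma~\ref{L:Div}, this translates in $\IM\PPA$ into a condition on triples of vertices, so it suffices to exhibit three vertices $\xx_1, \xx_2, \xx_3$ of $\PPA$ that are pairwise bounded above in $\PPA$ but share no common upper bound (or the symmetric statement for lower bounds). Writing down the corresponding elementary intervals then produces three elements of~$\MMA$ that witness the failure of the $3$-Ore condition.

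The key obstacle is the tension between the two positive assertions and the negative one: semi-convergence tends to force rich local lattice-like compatibilities, while violation of the $3$-Ore condition requires that some joins fail globally. The point of the carefully chosen $7$-element poset~$\PPA$ is precisely to balance these opposing demands, being locally rich enough that every simple closed zigzag reduces, yet globally incomplete enough that some triple of pairwise upper-bounded vertices shares no global upper bound. Once~$\PPA$ is exhibited, each of the three verifications collapses to a finite inspection.
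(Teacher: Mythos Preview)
Your proposal is correct and follows essentially the same approach as the paper. The paper verifies the local lattice condition directly, applies Lemma~\ref{L:SuffNC1} for semi-convergence (noting that~$\PPA$ has a minimum element~$0$, so condition~\eqref{E:SuffNC1} holds trivially), and exhibits the three atoms~$\tta, \ttb, \ttc$ as a triple witnessing the failure of the $3$-Ore condition; your hedging between \eqref{E:SuffNC1}, \eqref{E:SuffNC2}, and a case-by-case check is unnecessary once one observes the global minimum.
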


\begin{proof}
The poset $\PPA$ is a meet-semilattice and it is easy to check directly that each of the seven posets $\INF{\PPA}\xx$ is a join-semilattice. Hence $\MMA$ is a gcd-monoid. By Corollary~\ref{C:FiniteNoeth}, the monoid~$\MMA$ is noetherian, since the poset~$\PPA$ is finite. Now~\eqref{E:SuffNC1} is clear, since~$0$ is a common lower bound for all elements of~$\PPA$. Let~$\MMA$ be the interval monoid of~$\PPA$. A presentation of~$\MMA$ is
\begin{equation}\label{E:P9}
\MON{\tta, \tta', \tta'', \ttb, \ttb', \ttb'', \ttc, \ttc', \ttc''}{\tta\ttb' = \ttb\tta'', \ttb\ttc' = \ttc\ttb'', \ttc\tta' = \tta\ttc''}.
\end{equation}
Then, by Lemma~\ref{L:SuffNC1}, reduction is semi-convergent for~$\MMA$. 

On the other hand, $\MMA$ does not satisfy the $3$-Ore condition: $\tta$, $\ttb$, and $\ttc$ pairwise admit common right multiples, but they admit no global right lcm. Therefore, by~\cite[Prop.~2.24]{Dit}, reduction cannot be convergent for~$\MMA$. 
\end{proof}

\begin{figure}[htb]
\begin{picture}(50,35)(0,-1)
\psset{nodesep=0.7mm}
\psset{xunit=0.9mm,yunit=1mm}
\WPoint(20,0,0)\nput{315}{0}{$\scriptstyle0$}
\WPoint(0,15,1)\nput{180}{1}{$\scriptstyle1$}
\WPoint(0,30,2)\nput{90}{2}{$\scriptstyle2$}
\WPoint(20,15,3)\nput{90}{3}{$\scriptstyle3$}
\WPoint(40,30,4)\nput{90}{4}{$\scriptstyle4$}
\WPoint(40,15,5)\nput{0}{5}{$\scriptstyle5$}
\WPoint(20,30,6)\nput{90}{6}{$\scriptstyle6$}
\AArrow(5,6)%\nbput{$\tta'$}
\AArrow(1,6)%\naput{$\ttc''$}
\AArrow(0,1)%\naput{$\tta$}
\AArrow(0,3)%\naput{$\ttb$}
\AArrow(0,5)%naput{$\ttc$}
\AArrow(1,2)%\nbput{$\ttb'$}
\AArrow(3,2)%\naput{$\tta''$}
\AArrow(3,4)%\nbput{$\ttc'$}
\AArrow(5,4)%\naput{$\ttb''$}
\end{picture}
\begin{picture}(30,33)(0,-1)
\psset{nodesep=0.7mm}
\psset{xunit=1.5mm,yunit=1.7mm}
\WPoint(10,10,0)\nput{45}{0}{$\scriptstyle0$}
\WPoint(10,20,1)\nput{90}{1}{$\scriptstyle1$}
\WPoint(0,15,2)\nput{135}{2}{$\scriptstyle2$}
\WPoint(0,5,3)\nput{225}{3}{$\scriptstyle3$}
\WPoint(10,0,4)\nput{270}{4}{$\scriptstyle4$}
\WPoint(20,5,5)\nput{315}{5}{$\scriptstyle5$}
\WPoint(20,15,6)\nput{45}{6}{$\scriptstyle6$}
\AArrow(0,1)\naput{$\tta$}
\AArrow(0,3)\naput{$\ttb$}
\AArrow(0,5)\naput{$\ttc$}
\AArrow(1,2)\nbput{$\ttb'$}
\AArrow(3,2)\naput{$\tta''$}
\AArrow(3,4)\nbput{$\ttc'$}
\AArrow(5,4)\naput{$\ttb''$}
\AArrow(5,6)\nbput{$\tta'$}
\AArrow(1,6)\naput{$\ttc''$}
\end{picture}
\caption{\sf Two views of the Hasse diagram of the local lattice~$\PPA$ that satisfies Condition~\eqref{E:SuffNC1}, hence such that reduction is semi-convergent for~$\IM\PPA$.}
\label{F:PropA}
\end{figure}

It may be observed that the enveloping group of~$\IM{\PPA}$ is a free group based (for instance) on $\{\tta, \tta', \ttb, \ttb', \ttc, \ttc'\}$, since the relations of~\eqref{E:P9} define the redundant generators~$\tta'', \ttb''$, and~$\ttc''$.

\begin{exam}
The truncated $4$-cube~$\PPB$ of Proposition~\ref{P:PropB} and Figure~\ref{F:PropB} is a local lattice, and it also satisfies~\eqref{E:SuffNC1}: two pairs admit a common upper bound in~$(\Pw(\Omega) \setminus \{\emptyset, \Omega\}, \subseteq)$ if and only if their intersection is nonempty, in which case they also admit a common lower bound (note that $\PPB$ consists of four glued copies of the poset~$\PPA$ of Proposition~\ref{P:PropA}). Similar examples can be obtained by considering the restriction of inclusion to any set $\{\XX \in \Pw(\Omega) \mid 1 \le \card\XX \le 3\}$ with~$\card\Omega \ge 4$.
\end{exam}

 The monoid~$\MMA$ of Proposition~\ref{P:PropA} provides an example for which reduction is semi-convergent but not convergent. However, in the case of~$\MMA$, semi-convergence holds because \eqref{E:SuffNC1} is true and there are very few simple zigzags in~$\PPA$. Starting from~$\PPA$, one can construct an infinite series of posets, all leading to semi-convergence without convergence, and admitting simple closed zigzags of arbitrary large size.

\begin{prop}\label{P:PropAn}
For $\nn \ge 1$, let $\PPAA\nn$ be the $3\nn + 4$~element poset obtained by alternatively gluing copies of~$\PPA$ and its mirror-image as in Figure~\ref{F:PropAn}, and let $\MMAA\nn$ be its interval monoid. Then $\MMAA\nn$ is a noetherian gcd-monoid for which reduction is semi-convergent but not convergent.
\end{prop}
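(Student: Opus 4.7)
The proof extends that of Proposition~\ref{P:PropA}, with $\PPAA\nn$ playing the role of~$\PPA$. The plan is to verify in turn that $\PPAA\nn$ is a finite local lattice (so by Proposition~\ref{P:Gcd} and Corollary~\ref{C:FiniteNoeth} the monoid $\MMAA\nn$ is a noetherian gcd-monoid), that condition~\eqref{E:SuffNC1} holds in $\PPAA\nn$ (which yields semi-convergence via Lemma~\ref{L:SuffNC1}), and finally that the $3$-Ore condition fails in $\MMAA\nn$ (which forbids convergence by~\cite[Prop.\,2.24]{Dit}).

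The local lattice property is obtained by induction on~$\nn$, the case $\nn=1$ being Proposition~\ref{P:PropA}. At the inductive step, the new copy of~$\PPA$ or its mirror is attached to $\PPAA{\nn-1}$ along the sublattice shown in Figure~\ref{F:PropAn}: the local up-set $\SUP{\PPAA\nn}{\xx}$ at an interior vertex is inherited directly from the containing copy (which is itself a local lattice by Proposition~\ref{P:PropA}), while at a boundary vertex it amalgamates compatible meet-semilattices from the two adjacent copies, and symmetrically for down-sets.

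Semi-convergence then rests on the observation that the alternating construction of Figure~\ref{F:PropAn} attaches each new copy above the top of the preceding stage and never introduces a vertex below the original bottom vertex~$0$ of the initial~$\PPA$; hence $0$ remains a global minimum of $\PPAA\nn$, every pair of vertices admits $0$ as a common lower bound, condition~\eqref{E:SuffNC1} is satisfied vacuously, and Lemma~\ref{L:SuffNC1} applies. For non-convergence, the expected witness is the triple of atoms $[0,1], [0,3], [0,5]$ of $\MMAA\nn$ inherited from the bottom copy of~$\PPA$: their pairwise right lcms $[0,2], [0,4], [0,6]$ are already present in $\IM\PPA$, while the geometric fact to be read off Figure~\ref{F:PropAn} is that the alternating gluing keeps the atoms $1, 3, 5$ below disjoint regions of the upper layers, so that no vertex of $\PPAA\nn$ dominates all three of them; by Lemma~\ref{L:Div}, the triple therefore admits no common right multiple in $\MMAA\nn$, which means the $3$-Ore condition fails.

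The principal technical obstacle is the local lattice verification at each gluing boundary, since semilattice structures are not automatically preserved under amalgamation: one must check on the attaching sublattice of Figure~\ref{F:PropAn} that the meet of two elements coming from different adjacent copies always exists and is unique, and symmetrically for joins. Once this is established, both the semi-convergence and non-convergence arguments are direct transfers of those used for~$\PPA$ in Proposition~\ref{P:PropA}.
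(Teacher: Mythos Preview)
Your semi-convergence argument rests on a misreading of Figure~\ref{F:PropAn}. You claim that ``the alternating construction attaches each new copy above the top of the preceding stage and never introduces a vertex below the original bottom vertex~$0$'', so that $0$ remains a global minimum and condition~\eqref{E:SuffNC1} holds trivially. But this is not what the figure shows. In the Hasse diagram on the left, the vertices $\xx_1, \xx_2, \xx_3$ all sit at the \emph{same} bottom level: each new copy of~$\PPA$ (or its mirror) contributes a new minimal element~$\xx_\ii$, and for $\nn \ge 2$ the poset~$\PPAA\nn$ has several minimal elements and no global minimum. Concretely, in~$\PPAA2$ the vertices~$\xx_1$ and~$\xx_2$ share the common upper bound~$\yy_1$ yet, being distinct minimal elements, have no common lower bound; so condition~\eqref{E:SuffNC1} fails outright, and Lemma~\ref{L:SuffNC1} is unavailable.

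The paper therefore takes a genuinely different route for semi-convergence: rather than invoking~\eqref{E:SuffNC1}, it analyses the simple closed zigzags of~$\PPAA\nn$ directly, by comparison with the poset~$\PPCC{2\nn}$ of Proposition~\ref{P:PropC}. The planar picture of~$\PPAA\nn$ is a full subgraph of that of~$\PPCC{2\nn}$ obtained by deleting a block of adjacent diamonds; this deletion removes precisely the long peripheral zigzag that is irreducible in~$\PPCC{2\nn}$, and the case analysis of Proposition~\ref{P:PropC} shows that every remaining simple closed zigzag is reducible, whence Proposition~\ref{P:PosNC} applies. Your treatment of non-convergence via the failure of the $3$-Ore condition is fine and matches the paper; only the semi-convergence half needs to be redone along these lines.
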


\begin{proof}[Proof (sketch)]
The argument is similar to the one explained below for Proposition~\ref{P:PropC}. Comparing Figures~\ref{F:PropAn} (right) and~\ref{F:PropC} (right) shows that $\PPAA3$ is the full subgraph of~$\PPCC6$ obtained by erasing five adjacent diamonds out of twelve (and shifting the names of the vertices). As a consequence, there exists no simple closed zigzag in~$\PPAA3$ cycling around a central vertex as in~$\PPCC6$, and, by the argument of Proposition~\ref{P:PropC}, all other simple closed zigzags must be reducible. Therefore, reduction must be semi-convergent for~$\MMAA\nn$ with $\nn \le 3$. The result remains valid for~$\MMAA\nn$ with $\nn \ge 4$, as extending~$\PPAA\nn$ to~$\PPAA{\nn+1}$ creates new simple closed zigzags, but all eligible for the same reducibility argument. Note that the closed zigzag stemming from the peripheral circuit in the graph of~$\PPAA\nn$ is simple and has length~$2\nn + 2$ for $\nn \ge 2$. 

On the other hand, the failure of the $3$-Ore condition in~$\MMA$ remains valid in~$\MMAA\nn$ for every~$\nn$, so reduction cannot be convergent.
\end{proof}

\begin{figure}[htb]
\def\AArrow(#1,#2){\ncline[nodesep=0.5mm, linewidth=0.8pt, border=2pt]{->}{#1}{#2}}
\def\BArrow(#1,#2){\ncline[linecolor=blue, nodesep=0.5mm, linewidth=0.8pt, border=2pt]{->}{#1}{#2}}
\def\CArrow(#1,#2){\ncline[linecolor=red, nodesep=0.5mm, linewidth=0.8pt, border=2pt]{->}{#1}{#2}}
\begin{picture}(62,35)(2,-2)
\psset{nodesep=0.8mm}
\psset{xunit=1.05mm,yunit=0.8mm}
\WPoint(12,0,x1)\nput{270}{x1}{$\scriptstyle\xx_1$}
\WPoint(24,0,x2)\nput{270}{x2}{$\scriptstyle\xx_2$}
\WPoint(36,0,x3)\nput{270}{x3}{$\scriptstyle\xx_3$}
\WPoint(0,20,y0)\nput{0}{y0}{$\scriptstyle\yy_0$}
\WPoint(12,20,y1)\nput{0}{y1}{$\scriptstyle\yy_1$}
\WPoint(24,20,y2)\nput{0}{y2}{$\scriptstyle\yy_2$}
\WPoint(36,20,y3)\nput{0}{y3}{$\scriptstyle\yy_3$}
\WPoint(48,20,y4)\nput{0}{y4}{$\scriptstyle\yy_4$}
\WPoint(0,40,z1)\nput{90}{z1}{$\scriptstyle\zz_1$}
\WPoint(12,40,z2)\nput{90}{z2}{$\scriptstyle\zz_2$}
\WPoint(24,40,z3)\nput{90}{z3}{$\scriptstyle\zz_3$}
\WPoint(36,40,z4)\nput{90}{z4}{$\scriptstyle\zz_4$}
\WPoint(48,40,z5)\nput{90}{z5}{$\scriptstyle\zz_5$}
\CArrow(x2,y1) \BArrow(y2,z4) \BArrow(y4,z4) 
\AArrow(y0,z2) \AArrow(y2,z2) \AArrow(y0,z1) \AArrow(y1,z1) \AArrow(y1,z3) \AArrow(y2,z3) 
\AArrow(x1,y0) \AArrow(x1,y1) \AArrow(x1,y2) 
\CArrow(y3,z3) \CArrow(x2,y3) \CArrow(x3,y2) \CArrow(x3,y3)
\BArrow(y3,z5) \BArrow(y4,z5) \BArrow(x3,y4) 
\end{picture}
\begin{picture}(45,32)(0,-1)
\psset{nodesep=0.5mm}
\psset{xunit=0.75mm,yunit=0.7mm}
\WPoint(15,20,x1)\nput{45}{x1}{$\scriptstyle\xx_1$}
\WPoint(30,50,x2)\nput{90}{x2}{$\scriptstyle\xx_2$}
\WPoint(45,20,x3)\nput{45}{x3}{$\scriptstyle\xx_3$}
\WPoint(0,10,y0)\nput{180}{y0}{$\scriptstyle\yy_0$}
\WPoint(15,40,y1)\nput{135}{y1}{$\scriptstyle\yy_1$}
\WPoint(30,10,y2)\nput{270}{y2}{$\scriptstyle\yy_2$}
\WPoint(45,40,y3)\nput{45}{y3}{$\scriptstyle\yy_3$}
\WPoint(60,10,y4)\nput{0}{y4}{$\scriptstyle\yy_4$}
\WPoint(0,30,z1)\nput{180}{z1}{$\scriptstyle\zz_1$}
\WPoint(15,0,z2)\nput{270}{z2}{$\scriptstyle\zz_2$}
\WPoint(30,30,z3)\nput{315}{z3}{$\scriptstyle\zz_3$}
\WPoint(45,00,z4)\nput{270}{z4}{$\scriptstyle\zz_4$}
\WPoint(60,30,z5)\nput{0}{z5}{$\scriptstyle\zz_5$}
\AArrow(y0,z2) \AArrow(y2,z2) \AArrow(y0,z1) \AArrow(y1,z1) \AArrow(y1,z3) \AArrow(y2,z3) 
\AArrow(x1,y0) \AArrow(x1,y1) \AArrow(x1,y2) 
\CArrow(y3,z3) \CArrow(x2,y1) \CArrow(x2,y3) \CArrow(x3,y2) \CArrow(x3,y3)
\BArrow(y2,z4) \BArrow(y4,z4) \BArrow(y3,z5) \BArrow(y4,z5) \BArrow(x3,y4) 
\end{picture}
\caption{\sf The Hasse diagram of the poset $\PPAA\nn$, for $\nn = 3$, again in two different forms, emphasizing the levels of vertices (left), and as a planar graph (right): $\PPAA1$ (in black) coincides with~$\PPA$, and $\PPAA{\nn + 1}$ is obtained from~$\PPAA\nn$ by adding 3~vertices and 5~edges (in red for~$\PPAA2$, in blue for~$\PPAA3$), so as to form a new copy of~$\PPA$ or its reversed image.}
\label{F:PropAn}
\end{figure}

The monoid~$\MMAA2$ witnesses for another property. By Proposition~\ref{P:PropAn}, reduction is semi-convergent for~$\MMAA2$, so, in other words, the latter satisfies Conjecture~$\ConjA$ of~\cite{Diu}. However, in~$\MMAA2$, let $\aav$ be the $6$-multifraction
$$[\yy_0, \zz_1]/[\xx_2, \zz_1]/[\xx_2, \zz_5]/[\yy_4, \zz_5]/[\yy_4, \zz_4] / [\yy_2, \zz_4] / [\yy_2, \zz_2] / [\yy_0, \zz_2],$$
corresponding to the outer boundary in Figure~\ref{F:PropAn} (right). Then $\aav$ is unital, and it reduces to~$\one$ as semi-convergence requires. But applying to~$\aav$ maximal tame reduction~\cite[Prop.~4.18]{Diu} at the successive levels $1, 2, 3, 4, 5, 1, 2, 3, 1$ yields the multifraction $1/[\xx_1, \yy_0]/1/1/ [\xx_1, \yy_0] / 1$, which is not trivial. This shows that the universal recipe of~\cite[Def.~4.27]{Diu} may fail in~$\MMAA2$ and, therefore, the latter does not satisfy Conjecture~$\ConjB$ of~\cite{Diu}. Thus, Conjecture~$\ConjB$ may be strictly stronger than Conjecture~$\ConjA$. 

%%%%
\subsection{Examples where reduction is not semi-convergent}

In the other direction, we now describe examples where reduction is not semi-convergent; more precisely, we establish Proposition~C of the introduction by constructing for every even integer~$\nn$ a monoid for which reduction is $\pp$-semi-convergent for~$\pp < \nn$, but not $\nn$-semi-convergent. So, we look for posets in which some closed zigzags are irreducible, the point being to ensure that the local lattice condition is true and all short zigzags are reducible.

\begin{prop}\label{P:PropC}
For $\nn \ge 4$ even, let $\PPCC\nn$ be the poset with domain $\{\yy\} \cup \{\xx_\ii, \yy_\ii, \zz_\ii \mid \ii = 1 \wdots \nn\} $ and relations $\xx_\ii \le \yy_\ii \le \zz_\ii$ and $\xx_\ii \le \yy_{\ii - 1} \le \zz_\ii$ for $\ii = 1 \wdots \nn$ with $\yy_0 = \yy_\nn$, and $\xx_\ii \le \yy \le \zz_\jj$ for $\ii$ odd and $\jj$ even, and let $\MMCC\nn$ be its interval monoid. Then $\MMCC\nn$ is a noetherian gcd-monoid for which reduction is $\pp$-semi-convergent for~$\pp < \nn$ but not $\nn$-semi-convergent.
\end{prop}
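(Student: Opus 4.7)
The plan has three parts. First, the noetherianity of $\MMCC\nn$ is immediate from Corollary~\ref{C:FiniteNoeth} since $\PPCC\nn$ is finite. For the gcd-monoid property, Proposition~\ref{P:Gcd} reduces matters to showing that $\PPCC\nn$ is a local lattice, which I would verify by a short case analysis on the type and parity of the vertex. When $\ii$ is even, $\SUP\PP{\xx_\ii}$ is the six-element subposet $\{\xx_\ii, \yy_{\ii-1}, \yy_\ii, \zz_{\ii-1}, \zz_\ii, \zz_{\ii+1}\}$, which is obviously a meet-semilattice; when $\ii$ is odd, $\SUP\PP{\xx_\ii}$ additionally contains $\yy$ and every $\zz_\jj$ with $\jj$ even, but the only potentially problematic pairs---two $\zz_\jj$'s at non-adjacent positions of the ring---meet precisely at $\yy$. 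The verifications for $\INF\PP{\zz_\jj}$ as a join-semilattice are symmetric, and the remaining types of vertex are routine.

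For the failure of $\nn$-semi-convergence, I would exhibit the ``complementary'' simple closed $\nn$-zigzag
\[
 \xxv = (\xx_2, \zz_3, \xx_4, \zz_5, \ldots, \xx_\nn, \zz_1, \xx_2),
\]
which uses only even-indexed $\xx$'s and odd-indexed $\zz$'s and thereby avoids every vertex comparable with the central $\yy$. Unitality of $\FF(\xxv)$ is checked by a direct computation in $\EG{\MMCC\nn}$: each interval $[\xx_{2k}, \zz_{2k\pm 1}]$ admits a unique decomposition through some $\yy_\kk$; writing the whole product in terms of the atomic generators $\alpha_\ii = [\xx_\ii, \yy_{\ii-1}]$, $\beta_\ii = [\xx_\ii, \yy_\ii]$, $\gamma_\ii = [\yy_{\ii-1}, \zz_\ii]$, $\delta_\ii = [\yy_\ii, \zz_\ii]$, $\xi_\ii = [\xx_\ii,\yy]$, $\eta_\jj = [\yy,\zz_\jj]$, applying the diamond relation $\alpha_\ii\gamma_\ii = \beta_\ii\delta_\ii$ to turn every $\gamma_\ii\delta_\ii^{-1}$ into $\alpha_\ii^{-1}\beta_\ii$, and then replacing each $\alpha_\ii^{-1}\beta_\ii$ with $\ii$ odd by $\delta_{\ii-1}\eta_{\ii-1}^{-1}\eta_{\ii+1}\gamma_{\ii+1}^{-1}$ using the triangle relations $\alpha_\ii\delta_{\ii-1} = \xi_\ii\eta_{\ii-1}$ and $\beta_\ii\gamma_{\ii+1} = \xi_\ii\eta_{\ii+1}$, the product telescopes to $1$. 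For irreducibility at each position $\ii \in \{1, \ldots, \nn-1\}$, I would check using Definition~\ref{D:PosRed} that any candidate witness $\yy'$ would necessarily lie in the set of common bounds of two vertices of $\xxv$ whose indices differ by two on the cyclic ring; in $\PPCC\nn$ the only such element is the very vertex of $\xxv$ that the rule requires to be bypassed strictly, so no witness exists. The crucial point is that $\yy$ is not comparable with any $\zz_\jj$ of odd index, hence cannot rescue the situation.

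For $\pp$-semi-convergence with $\pp < \nn$, I would invoke Corollary~\ref{C:PosNCn} and prove that every simple closed $\pp'$-zigzag $\xxv$ in $\PPCC\nn$ with $\pp' \le \nn - 1$ is reducible; the case $\pp' \le 3$ is Lemma~\ref{L:Red3}. For $\pp' \ge 4$ I would distinguish two cases. If $\yy$ is one of the vertices $v_\kk$ of $\xxv$, then $\yy$ itself serves as the witness $\yy'$ at a position adjacent to $\kk$, thanks to its comparability with all odd $\xx_\ii$ below and all even $\zz_\jj$ above. If $\yy$ is absent from $\xxv$, then $\xxv$ lies in the outer ``ring'' subposet, which is the cyclic gluing of the $\nn$ diamonds and, as a $2$-complex, is homotopy equivalent to an annulus; since each edge of $\xxv$ shifts the cyclic index by at most one, a closed zigzag of length $\pp' < \nn$ has winding number zero and is therefore confined to a contiguous arc of fewer than $\nn$ diamonds. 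Within this simply connected arc, an induction on $\pp'$---locating a corner of $\xxv$ where two consecutive edges stay inside a single diamond and using the local lattice structure to supply $\yy'$---yields reducibility.

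The main obstacle is this last subcase: although the absence of winding makes reducibility geometrically compelling, a fully rigorous induction requires one to pinpoint a corner of $\xxv$ that can be shortened using a single diamond and to verify that the resulting sub-zigzag remains simple and closed, including the degenerate situations where two consecutive edges of $\xxv$ enter and leave the same diamond along different co-atoms. By contrast, the telescoping and case-by-case computations underlying the second paragraph are straightforward once the uniform notation $\alpha_\ii, \beta_\ii, \gamma_\ii, \delta_\ii, \xi_\ii, \eta_\jj$ for the atomic intervals of $\PPCC\nn$ is introduced.
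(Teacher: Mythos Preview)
Your overall architecture matches the paper's, and the first two parts are essentially correct; the parity swap between your indices and the paper's reflects an internal inconsistency of the paper between statement and figure, not an error on your side. Your telescoping verification of unitality and your use of Definition~\ref{D:PosRed} for irreducibility are legitimate alternatives to the paper's more geometric ``tiling by squares'' and ``no common right multiple'' arguments.

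The genuine gap lies in the $p$-semi-convergence argument, and it is not only the unfinished induction you flag. Your dichotomy ``$y$ is or is not a vertex of the zigzag'' is too coarse. Even when $y$ does not occur among the zigzag vertices, a step of the zigzag may use a comparability that exists only via~$y$: for instance, with $n = 6$ (statement convention) the simple closed $4$-zigzag $(x_1, z_4, x_3, z_2, x_1)$ contains no~$y$, yet the relation $x_1 < z_4$ holds solely because $x_1 < y < z_4$. Your claim that ``each edge of~$\xxv$ shifts the cyclic index by at most one'' is therefore false for such zigzags, and the winding-number / annulus argument collapses. The paper avoids this by splitting instead on whether a \emph{Hasse-diagram path~$\gamma$} connecting the zigzag vertices visits~$y$: long-jump zigzags then fall into the ``$\gamma$ visits~$y$'' branch, which is dispatched by a short local analysis of the two or three edges of~$\gamma$ preceding the visit. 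Your ``$y$ is a vertex'' case is likewise incomplete as stated: if $v_k = y$ is a local minimum with $v_{k-1} = z_j$ ($j$ even) and $v_{k-2} = x_j$, then at position~$k-1$ the only candidate witness in $[y, z_j)$ is~$y$ itself, and $x_j$ and~$y$ have no common lower bound since $j$ is even. One must then look further along the zigzag to locate a reducible position, which is precisely what the paper's step-by-step case analysis of the edges of~$\gamma$ near~$y$ accomplishes.
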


\begin{figure}[htb]
\def\AArrow(#1,#2){\ncline[nodesep=0.3mm, linewidth=0.8pt, border=2pt]{->}{#1}{#2}}
\def\CArrow(#1,#2){\ncline[linecolor=red, nodesep=0.3mm, linewidth=0.8pt, border=2pt]{->}{#1}{#2}}
\def\PArrow(#1,#2){\ncline[linestyle=dashed, nodesep=0.3mm, linewidth=0.8pt, border=0.8pt]{->}{#1}{#2}}
\begin{picture}(65,49)(0,-3)
\psset{nodesep=0.5mm}
\psset{unit=1.05mm}
\WPoint(25,20,0)\nput{0}{0}{$\scriptstyle\yy$}
\WPoint(10,0,1)\nput{270}{1}{$\scriptstyle\xx_1$}
\WPoint(20,-1,2)\nput{270}{2}{$\scriptstyle\xx_2$}
\WPoint(40,0,3)\nput{270}{3}{$\scriptstyle\xx_3$}
\WPoint(30,1,4)\nput{270}{4}{$\scriptstyle\xx_4$}
\WPoint(15,19,5)\nput{180}{5}{$\scriptstyle\yy_1$}
\WPoint(35,19,6)\nput{180}{6}{$\scriptstyle\yy_2$}
\WPoint(50,20,7)\nput{0}{7}{$\scriptstyle\yy_3$}
\WPoint(0,20,8)\nput{180}{8}{$\scriptstyle\yy_4$}
\WPoint(30,39,9)\nput{90}{9}{$\scriptstyle\zz_2$}
\WPoint(40,40,10)\nput{90}{10}{$\scriptstyle\zz_3$}
\WPoint(20,41,11)\nput{90}{11}{$\scriptstyle\zz_4$}
\WPoint(10,40,12)\nput{90}{12}{$\scriptstyle\zz_1$}
\PArrow(4,8) \PArrow(4,7) \PArrow(8,11) \PArrow(7,11) 
\CArrow(2,0) \CArrow(4,0) \CArrow(0,12) \CArrow(0,10) 
\AArrow(1,5) \AArrow(1,8) \AArrow(2,5) \AArrow(2,6) \AArrow(3,6) \AArrow(3,7) \AArrow(5,12) \AArrow(5,9) \AArrow(8,12) \AArrow(6,9) \AArrow(6,10) \AArrow(7,10) 
\end{picture}
\begin{picture}(45,43)(-3,-3)
\psset{nodesep=0.5mm}
\psset{unit=1.05mm}
\WPoint(20,20,0)
\WPoint(20,0,1)\nput{270}{1}{$\scriptstyle\xx_1$}
\WPoint(10,20,2)\nput{180}{2}{$\scriptstyle\xx_2$}
\WPoint(20,40,3)\nput{90}{3}{$\scriptstyle\xx_3$}
\WPoint(30,20,4)\nput{0}{4}{$\scriptstyle\xx_4$}
\WPoint(10,10,5)\nput{225}{5}{$\scriptstyle\yy_1$}
\WPoint(10,30,6)\nput{135}{6}{$\scriptstyle\yy_2$}
\WPoint(30,30,7)\nput{45}{7}{$\scriptstyle\yy_3$}
\WPoint(30,10,8)\nput{315}{8}{$\scriptstyle\yy_4$}
\WPoint(0,20,9)\nput{180}{9}{$\scriptstyle\zz_2$}
\WPoint(20,30,10)\nput{90}{10}{$\scriptstyle\zz_3$}
\WPoint(40,20,11)\nput{0}{11}{$\scriptstyle\zz_4$}
\WPoint(20,10,12)\nput{270}{12}{$\scriptstyle\zz_1$}
\AArrow(4,8) \AArrow(4,7) \AArrow(8,11) \AArrow(7,11) 
\CArrow(2,0) \CArrow(4,0) \CArrow(0,12) \CArrow(0,10) 
\AArrow(1,5) \AArrow(1,8) \AArrow(2,5) \AArrow(2,6) \AArrow(3,6) \AArrow(3,7) \AArrow(5,12) \AArrow(5,9) \AArrow(8,12) \AArrow(6,9) \AArrow(6,10) \AArrow(7,10) 
\nput{45}{0}{$\scriptstyle\yy$}
\end{picture}

\begin{picture}(65,60)(5,-7)
\psset{nodesep=0.5mm}
\psset{xunit=1.05mm,yunit=1.1mm}
\WPoint(30,20,0)\nput{0}{0}{$\scriptstyle\yy$}
\WPoint(45,2,5)\nput{180}{5}{$\scriptstyle\xx_4$}
\WPoint(25,0,6)\nput{180}{6}{$\scriptstyle\xx_5$}
\WPoint(5,0,1)\nput{270}{1}{$\scriptstyle\xx_6$}
\WPoint(0,20,7)\nput{180}{7}{$\scriptstyle\yy_6$}
\WPoint(60,20,10)\nput{0}{10}{$\scriptstyle\yy_3$}
\WPoint(47,22,11)\nput{0}{11}{$\scriptstyle\yy_4$}
\WPoint(13,22,12)\nput{180}{12}{$\scriptstyle\yy_5$}
\WPoint(55,40,16)\nput{90}{16}{$\scriptstyle\zz_4$}
\WPoint(35,42,17)\nput{90}{17}{$\scriptstyle\zz_5$}
\WPoint(15,42,18)\nput{90}{18}{$\scriptstyle\zz_6$}
\PArrow(5,10) \PArrow(5,11) \PArrow(10,16) \PArrow(11,16)
\PArrow(6,11) \PArrow(6,12) \PArrow(11,17) \PArrow(12,17) 
\PArrow(1,12) \PArrow(1,7) \PArrow(12,18) \PArrow(7,18) 
\WPoint(15,-2,2)\nput{270}{2}{$\scriptstyle\xx_1$}
\WPoint(35,-2,3)\nput{270}{3}{$\scriptstyle\xx_2$}
\WPoint(55,2,4)\nput{270}{4}{$\scriptstyle\xx_3$}
\WPoint(20,18,8)\nput{0}{8}{$\scriptstyle\yy_1$}
\WPoint(40,18,9)\nput{180}{9}{$\scriptstyle\yy_2$}
\WPoint(5,40,13)\nput{90}{13}{$\scriptstyle\zz_1$}
\WPoint(25,38,14)\nput{90}{14}{$\scriptstyle\zz_2$}
\WPoint(45,38,15)\nput{90}{15}{$\scriptstyle\zz_3$}
\CArrow(1,0) \CArrow(3,0) \CArrow(5,0) \CArrow(0,13) \CArrow(0,15) \CArrow(0,17) 
\AArrow(2,7) \AArrow(2,8) \AArrow(7,13) \AArrow(8,13) 
\AArrow(3,8) \AArrow(3,9) \AArrow(8,14) \AArrow(9,14) 
\AArrow(4,9) \AArrow(4,10) \AArrow(9,15) \AArrow(10,15) 
\end{picture}
\begin{picture}(45,56)(0,-1)
\psset{nodesep=0.5mm}
\psset{xunit=0.8mm,yunit=0.7mm}
\WPoint(30,40,0)
\WPoint(45,30,1)\nput{315}{1}{$\scriptstyle\xx_6$}
\WPoint(30,0,2)\nput{270}{2}{$\scriptstyle\xx_1$}
\WPoint(15,30,3)\nput{225}{3}{$\scriptstyle\xx_2$}
\WPoint(0,60,4)\nput{135}{4}{$\scriptstyle\xx_3$}
\WPoint(30,60,5)\nput{90}{5}{$\scriptstyle\xx_4$}
\WPoint(60,60,6)\nput{45}{6}{$\scriptstyle\xx_5$}
\WPoint(45,10,7)\nput{315}{7}{$\scriptstyle\yy_6$}
\WPoint(15,10,8)\nput{225}{8}{$\scriptstyle\yy_1$}
\WPoint(0,40,9)\nput{135}{9}{$\scriptstyle\yy_2$}
\WPoint(15,70,10)\nput{135}{10}{$\scriptstyle\yy_3$}
\WPoint(45,70,11)\nput{45}{11}{$\scriptstyle\yy_4$}
\WPoint(60,40,12)\nput{0}{12}{$\scriptstyle\yy_5$}
\WPoint(30,20,13)\nput{270}{13}{$\scriptstyle\zz_1$}
\WPoint(0,20,14)\nput{225}{14}{$\scriptstyle\zz_2$}
\WPoint(15,50,15)\nput{135}{15}{$\scriptstyle\zz_3$}
\WPoint(30,80,16)\nput{90}{16}{$\scriptstyle\zz_4$}
\WPoint(45,50,17)\nput{45}{17}{$\scriptstyle\zz_5$}
\WPoint(60,20,18)\nput{315}{18}{$\scriptstyle\zz_6$}
\AArrow(5,10) \AArrow(5,11) 
\AArrow(6,11) \AArrow(6,12) 
\AArrow(7,18) \AArrow(12,18) \AArrow(11,17) 
\CArrow(1,0) \CArrow(3,0) \CArrow(5,0) \CArrow(0,13) \CArrow(0,15) \CArrow(0,17) 
\AArrow(1,12) \AArrow(1,7) 
\AArrow(2,7) \AArrow(2,8) 
\AArrow(3,8) \AArrow(3,9) 
\AArrow(4,9) \AArrow(4,10) 
\AArrow(7,13) \AArrow(11,16) \AArrow(12,17) 
\AArrow(8,13) \AArrow(8,14) 
\AArrow(9,14) \AArrow(9,15) 
\AArrow(10,15) \AArrow(10,16) 
\nput{0}{0}{$\ \scriptstyle\yy$}
\end{picture}
\caption{\sf The Hasse diagram of the poset $\PPCC\nn$, for $\nn = 4$ (top) and $\nn = 6$ (bottom), viewed as a necklace of $\nn$ connected diamonds plus a central $\nn$-ray cross connecting each other endpoint (left) and as a planar graph (right).}
\label{F:PropC}
\end{figure}

\begin{proof}
The local lattice condition is checked directly by considering the various types of vertices in~$\PPCC\nn$: typically, for $\ii$ odd, $\SUP{\PPCC\nn}{\xx_\ii}$ is a 4-element lattice whereas, for $\ii$ even, it consists of~$\xx_\ii$ plus the six elements $\yy_\ii$, $\yy_{\ii - 1}$, $\yy$, $\zz_{\ii - 1}$, $\zz_\ii$, $\zz_{\ii + 1}$ and is a copy of the meet-semilattice~$\PPA$ of Proposition~\ref{P:PropA}. Hence $\MMCC\nn$ is a gcd-monoid. Moreover, as $\PPCC\nn$ is finite, $\MMCC\nn$ is noetherian.

As $\MMCC\nn$ is an interval monoid, it embeds into its group by Proposition~\ref{P:NF} and, therefore, reduction is $2$-semi-convergent.

Consider the $\nn$-multifraction 
$$\aav = [\xx_1, \zz_2] / [\xx_3, \zz_2] / [\xx_3, \zz_4] \sdots [\xx_{\nn - 1}, \zz_\nn] / [\xx_1, \zz_\nn],$$
which corresponds to the exterior loop in the right hand side diagram for~$\PPCC\nn$. As the diagram is tiled by squares that correspond to defining relations of the monoid, $\aav$ is unital. We claim that it is irreducible. Indeed, assume $\ii$ even, hence negative in~$\aav$. The only nontrivial left divisor of~$[\xx_{\ii +1}, \zz_{\ii + 2}]$ in~$\MMCC\nn$ is $[\xx_{\ii + 1}, \yy_{\ii + 1}]$. Now $[\xx_{\ii + 1}, \yy_{\ii + 1}]$ and $[\xx_{\ii + 1}, \zz_\ii]$ admit no common right multiple in~$\MMCC\nn$: this can be obtained directly from \cite[Prop.\,5.10]{Weh}, but a self-contained argument runs as follows. The only decomposition of~$[\xx_{\ii + 1}, \zz_\ii]$ is $[\xx_{\ii + 1}, \yy_\ii] \opp [\yy_\ii, \zz_\ii]$. By definition, $[\xx_{\ii + 1}, \yy_{\ii + 1}]$ and $[\xx_{\ii + 1}, \yy_\ii]$ admit a right lcm, which is $[\xx_{\ii + 1}, \yy_\ii] \opp [\yy _\ii, \zz_{\ii + 1}]$. Now $[\yy_\ii, \zz_\ii]$ and $[\yy _\ii, \zz_{\ii + 1}]$ admit no common right multiple in~$\MMCC\nn$, since $\zz_\ii$ and~$\zz_{\ii + 1}$ admit no common upper bound in~$\PPCC\nn$. Hence reduction is not $\nn$-semi-convergent for~$\MMCC\nn$. 

Now, we claim that, for $\pp < \nn$ even, all simple closed $\pp$-zigzags in~$\PPCC\nn$ are reducible. Indeed, assume that $\xxv$ is a simple closed zigzag of length~$\pp <\nobreak \nn$. Let $\gamma$ be a loop in the Hasse diagram of~$\PPCC\nn$ connecting the points of~$\xxv$ (such a path is not unique: in each diamond, one can choose one side or the other). We claim that, if $\gamma$ does not visit the central vertex~$\yy$, then $\xxv$ must be reducible: indeed, $\pp < \nn$ implies that $\gamma$ is too short to circle around~$\yy$ (in the sense of the right hand side diagrams in Figure~\ref{F:PropC}) and, therefore, it must contain a U-turn that corresponds either to a pattern $\ss, \tt, \ss$ (one arrow crossed back and forth) or to a pattern $\ss, \tt, \uu, \ss$ (four arrows around a diamond), both directly implying that $\xxv$ is reducible. Now assume that $\gamma$ visits~$\yy$, and consider what happens in the preceding steps: owing to the symmetries, we can assume with loss of generality that $\gamma$ reaches~$\yy$ from~$\zz_1$, which must be an entry of~$\xxv$, since it is maximal in~$\PPCC\nn$. Before~$\zz_1$, $\gamma$ can come either from~$\yy$, in which case $\xxv$ is reducible since $\gamma$ includes the pattern $\yy, \zz_1, \yy$, or from~$\yy_1$, or from~$\yy_\nn$, the latter two cases being symmetric. So assume $\gamma$ contains~$\yy_1, \zz_1, \yy$. Before that, $\gamma$ can come either from~$\zz_1$, in which case $\xxv$ is obviousy reducible, or from~$\zz_2$ (downwards), or from~$\xx_1$ or~$\xx_2$ (upwards). If $\gamma$ contains~$\zz_2, \yy_1, \zz_1, \yy$, then $\xxv$, which then contains the entries $\zz_2, \yy_1, \zz_1, \xx$ for some $\xx \le \yy < \zz_1$, is reducible, because $\yy$ and~$\yy_1$ admit the common lower bound~$\xx_2$. If $\gamma$ contains~$\xx_1, \yy_1, \zz_1, \yy$, the vertex before~$\xx_1$ must be either~$\yy_1$, in which case $\xxv$ is reducible, or~$\yy_\nn$, in which case $\xxv$ is also reducible as we have $\yy_\nn < \zz_1$. Finally, if $\gamma$ contains~$\xx_2, \yy_1, \zz_1, \yy$, then $\xxv$ is reducible, because we have $\xx_2 < \yy$. By applying Corollary~\ref{C:PosNCn}, we conclude that reduction is $\pp$-semi-convergent for~$\MMCC\nn$. (For $\nn \ge 6$, in the special case of $4$-zigzags, we can alternatively observe that the vertices~$\yy_\ii$ and~$\yy$ witness that the poset~$\PPCC\nn$ has the interpolation property and apply Lemma~\ref{L:SuffNC2}.)
\end{proof}

\begin{rema}\label{R:DivExp}
Say that $(\cc_1 \wdots \cc_\nn)$ is a \emph{central cross} for an $\nn$-multifraction~$\bbv$ if $\bb_\ii = \cc_{\ii - 1} \cc_\ii$ (\resp $\bb_\ii = \cc_\ii \cc_{\ii -1}$) holds for every $\ii$ positive (\resp negative) in~$\bbv$ (with the convention $\cc_0 = \cc_\nn$). The right hand side diagrams in Figure~\ref{F:PropC} show that the multifraction
$$\bbv = [\xx_\nn, \zz_1] / [\xx_2, \zz_1] / [\xx_2, \zz_3] \sdots [\xx_\nn, \zz_{\nn - 1}]$$
admits a central cross and that the multifraction~$\aav$ witnessing for the failure of $\nn$-semi-convergence is an lcm-expansion of~$\bbv$, meaning~\cite{Dit} that, for each~$\ii \le \nn$, there are decompositions $\aa_\ii = \aa'_\ii\aa''_\ii$, $\bb_\ii = \bb'_\ii \bb''_\ii$ satisfying $\aa'_{\ii - 1} \bb''_\ii = \aa'_ \ii \bb''_{\ii + 1} = \bb''_\ii \lcmt \bb''_{\ii + 1}$ (\resp $\bb'_\ii \aa''_{\ii - 1} = \bb'_{\ii + 1} \aa''_ \ii = \bb'_\ii \lcm \bb'_{\ii + 1}$) for $\ii$ positive (\resp negative) in~$\bbv$. Multifractions admitting a central cross are, in some sense, the simplest unital multifractions, and they are always reducible; their lcm-expansions, which are also unital, appear as the next complexity step in the family of unital multifractions. Thus, if we say that reduction is \emph{weakly $\nn$-semi-convergent} for~$\MM$ if every lcm-expansion of an $\nn$-multifraction with a central cross is either trivial or reducible, then reduction fails not only to be $\nn$-semi-convergent, but even to be weakly $\nn$-semi-convergent for~$\MMCC\nn$. 
\end{rema}

\begin{rema}
The monoid~$\MMCC\nn$ contradicts the known alternative forms of semi-convergence. For instance, the element $\gg = [\xx_1, \zz_2][\xx_3, \zz_2]\inv$ in~$\EG{\MMCC4}$ is represented by two distinct irreducible fractions, namely $[\xx_1, \zz_2] / [\xx_3, \zz_2]$ and $[\xx_1, \zz_4] / [\xx_3, \zz_4]$. On the other hand, the $6$-multifraction 
$$[\xx_1, \yy_1] / [\xx_2, \yy_1] / [\xx_2, \zz_3] / [\xx_4, \zz_3] / [\xx_4, \yy_4] / [\xx_1, \yy_4]$$ 
turns out to be reducible both to~$\one$ and to $\aav \opp \one$, where $\aav$ is the irreducible multifraction in the proof of Proposition~\ref{P:PropC}, contradicting weak confluence in~$\MMCC4$.
\end{rema}

As in Section~\ref{S:Embed}, some quotients of the monoids~$\MMCC\nn$ share their properties but have fewer atoms. Starting from~$\MMCC4$, which has $20$~atoms, one finds that
$$\MON{\tta \wdots \mathtt{f, x, y}}{\mathtt{ab=ba,cd=dc,ef=fe,db=x^\mathrm2,eb=y^\mathrm2,ca=xy,fa=yx}}$$
is a gcd-monoid~$\MM$ for which reduction fails to be $4$-semi-convergent (and even weakly $4$-semi-convergent): the unital $4$-multifraction $\mathtt{ac/bd/af/be}$ is irreducible. Similarly, starting from~$\MMCC6$, which has $30$~atoms, we find that 
$$\begin{matrix}
\langle\mathtt{a \wdots f, x, y, z} \mid \mathtt{ab=ba,cd=dc,ef=fe,}\hspace{4cm}\\ 
\hspace{2cm}\mathtt{ea=xy,ae=yx,db=zy,bd=yz,fc=xz,cf=zx}\rangle^+,
\end{matrix}$$
is a gcd-monoid in which $\mathtt{ac/ed/fb/ca/de/bf}$ is unital and irreducible, contradicting~$6$-semi-convergence---and even weak $6$-semi-convergence---whereas $$\mathtt{ac/ed/f/a/b/c/de/bf}$$ reduces both to~$\one$ and to the previous nontrivial multifraction. However, it is not clear that, in the above quotients, $2$-semi-convergence is preserved, that is, that these monoids embed into their respective groups.

%%%%
\section{Extending the method}\label{S:Ext}

We briefly discuss further extensions of the previous results.

%%%%
\subsection{Category monoids}\label{SS:CatMon}

To any poset~$P$ one can associate a (small) category $\Cat{P}$, whose objects are the elements of~$P$ and where there is an arrow from~$x$ to~$y$, which is then unique, if and only if $x\le y$ holds. Now to every category~$\CCC$, one can associate its \emph{universal monoid} $\Um{\CCC}$, defined by the generators $\overline{f}$, where~$f$ is an arrow of~$\CCC$, and the relations
 \begin{align}
 \overline{fg}&=\overline{f}\cdot\overline{g}\,,
 &&\text{whenever }fg\text{ is defined},\label{E:CatMon1}\\
 \overline{f}&=1\,,
 &&\text{whenever }f\text{ is an identity of }\CCC\,.
 \label{E:CatMon2}
 \end{align}
Equivalently, viewing a monoid as a category with exactly one object, $\Um{\CCC}$, together with the canonical functor (morphism of categories) $\CCC\to\Um{\CCC}$, $f\mapsto\overline{f}$, is an initial object in the category of all functors from~$\CCC$ to a monoid.

In~\cite{Weh}, many of the results proved here for the construction $P\mapsto\IM{P}$ are established in the more general context of universal monoids of categories.
Let us consider, for example, Proposition~\ref{P:NF}.
Part~(ii) of that result, about the existence of a unique normal form, can be extended to the universal monoid of any category: this is contained in \cite[Lemma~3.4]{Weh}, and can ultimately be traced back to Higgins~\cite{Higg1971}.
On the other hand, Part~(i) of Proposition~\ref{P:NF}, which states the embeddability of~$\IM{P}$ into its group, cannot be extended to an arbitrary category, simply because there are monoids that cannot be embedded into any group (consider non-cancellative monoids!).

Nevertheless, it is proved in \cite[Thm.\,10.1]{Weh} that, for any category~$\CCC$, the mon\-oid~$\Um{\CCC}$ embeds into its group if and only if there are a group~$G$ and a functor $\varphi\colon\CCC\to G$ such that the restriction of~$\varphi$ to every hom-set of~$\CCC$ is one-to-one. This is applied, in \cite[Ex.\,10.2]{Weh}, to the example below, leading us to a new gcd-monoid for which reduction is semi-convergent but not convergent.

\begin{prop}\label{P:PropC6}
Let 
\begin{equation}\label{E:C6}
\MMD = \MON{\tta, \ttb, \ttc, \tta', \ttb', \ttc'}{\mathtt{ab'=ba'\,,\ bc'=cb'\,,\ ac'=ca'}}.
\end{equation}
Then reduction is semi-convergent but not convergent for~$\MMD$. 
\end{prop}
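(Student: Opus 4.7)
The plan is to realize $\MMD$ as the universal monoid of a small category and then apply the structural results of \cite{Weh} together with the zigzag machinery of Section~\ref{S:NCInt}, suitably adapted from interval monoids to category monoids. Specifically, let $\CCC$ be the category with two objects $0, 1$, generated by arrows $\tta, \ttb, \ttc \colon 0 \to 1$ and $\tta', \ttb', \ttc' \colon 1 \to 0$, modulo the three category-level equalities $\tta\ttb' = \ttb\tta'$, $\ttb\ttc' = \ttc\ttb'$, $\tta\ttc' = \ttc\tta'$ in $\CCC(0,0)$. The defining relations~\eqref{E:CatMon1}--\eqref{E:CatMon2} of~$\Um\CCC$ then reduce exactly to~\eqref{E:C6}, so $\MMD = \Um\CCC$. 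Noetherianity is immediate from the homogeneity of~\eqref{E:C6}, while the gcd-monoid property and the embedding of~$\MMD$ into its enveloping group come from~\cite[Thm.\,10.1]{Weh}, as worked out explicitly in~\cite[Ex.\,10.2]{Weh}: one exhibits a functor $\varphi \colon \CCC \to G$ into a group $G$ whose restriction to each hom-set of~$\CCC$ is one-to-one (yielding the embedding), together with a local lattice-type condition on~$\CCC$ (yielding the gcd structure).

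Non-convergence follows from the same $3$-Ore obstruction as in the proof of Proposition~\ref{P:PropA}. The atoms $\tta, \ttb, \ttc$ pairwise admit common right multiples---the three elements $\tta\ttb' = \ttb\tta'$, $\ttb\ttc' = \ttc\ttb'$, and $\tta\ttc' = \ttc\tta'$---but no global common right multiple, since such an element~$d$ would have to be right-divisible by each of $\tta, \ttb, \ttc$ in $\CCC(0,1)$, which conflicts with the unique normal-form description of morphisms of~$\CCC$ (the category analogue of Proposition~\ref{P:NF}\ITEM2, proved in~\cite{Weh}). By \cite[Prop.~2.24]{Dit}, reduction is therefore not convergent for~$\MMD$.

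The delicate step is semi-convergence, and the main obstacle is to transport the zigzag machinery of Section~\ref{S:NCInt} from interval monoids to universal monoids of categories. Simple multifractions on~$\Um\CCC$ are defined as sequences of non-identity morphisms of~$\CCC$ with alternating direction and matching endpoints, and the highlighting morphism of Lemma~\ref{L:Separ} extends to a morphism $\psi \colon \FRb\MMD \to \FG{\mathrm{Obj}\,\CCC} * \EG\MMD$ with the same separating property; the proof of Proposition~\ref{P:PosBase} then goes through verbatim, showing that every $\piece$-minimal unital multifraction on~$\MMD$ arises from a simple closed zigzag in the underlying graph of~$\CCC$. The essential combinatorial input is that the three relations defining~$\CCC$ express the category analogue of condition~\eqref{E:SuffNC1}: any two of the six generators admitting a common right extension also admit a common left restriction. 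The argument of Lemma~\ref{L:SuffNC1} then shows that every simple closed zigzag in~$\CCC$ is reducible, and the category version of Proposition~\ref{P:PosNC} yields semi-convergence for~$\MMD$.
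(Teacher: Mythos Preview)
Your overall plan matches the paper's: realize~$\MMD$ as a universal category monoid, extend the highlighting expansion and Proposition~\ref{P:PosBase} to the category setting, then verify that every simple closed zigzag is reducible; non-convergence is handled exactly as you say via the failure of $3$-Ore. The gap is in the first step. You take~$\CCC$ with \emph{two} objects and arrows $\tta,\ttb,\ttc\colon 0\to 1$, $\tta',\ttb',\ttc'\colon 1\to 0$, whereas the paper (following \cite[Ex.\,10.2]{Weh}) uses the category~$\CCC_6$ with \emph{three} objects and $\tta,\ttb,\ttc\colon 0\to 1$, $\tta',\ttb',\ttc'\colon 1\to 2$. Both categories have universal monoid~$\MMD$, but they behave very differently for the zigzag argument.

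The three-object category~$\CCC_6$ is finite and has trivial endomorphism monoids; simple closed zigzags are genuine zigzags over the objects $0,1,2$, exactly as in the poset case, and there are only finitely many simple ones to inspect (the picture is a direct category-level analogue of~$\PPA$). Your two-object category, by contrast, is infinite: the hom-sets $\CCC(0,0)$, $\CCC(1,1)$, $\CCC(0,1)$, $\CCC(1,0)$ each contain all alternating words in the generators, and there are non-identity endomorphisms such as $\tta\ttb'\in\CCC(0,0)$. In that setting the entries of a simple multifraction are arbitrary arrows of~$\CCC$, not just the six generators, so the sentence ``any two of the six generators admitting a common right extension also admit a common left restriction'' does not yield the analogue of~\eqref{E:SuffNC1}. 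That condition speaks of \emph{all} elements of the poset, and the proof of Lemma~\ref{L:SuffNC1} invokes it at an arbitrary local extremum of the zigzag; you would need the corresponding statement for arbitrary arrows of your~$\CCC$, which you have not argued and which is no longer a finite check. Switching to the three-object category~$\CCC_6$ removes this difficulty without changing the architecture of your argument. (Minor point: in your non-convergence paragraph, a common right multiple of~$\tta,\ttb,\ttc$ is \emph{left}-divided by each of them, not right-divided.)
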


\begin{proof}[Proof (sketch)]
By using the above-mentioned \cite[Thm.\,10.1]{Weh}, it is proved in~\cite{Weh} that~$\MMD$ embeds into its group, implying that reduction is $2$-semi-convergent for~$\MMD$. 

To prove that reduction is semi-convergent for~$\MMD$, one observes, as in \cite[Ex.\,10.2]{Weh}, that~$\MMD$ is the universal monoid of the finite category~$\CCC_6$ with three objects~$0$, $1$, $2$, and arrows~$\tta$, $\ttb$, $\ttc$ from~$0$ to~$1$ and~$\tta'$, $\ttb'$, $\ttc'$ from~$1$ to~$2$:
$$\begin{picture}(40,10)(0,0)
\WPoint(0,4,0)\nput{90}{0}{$0$}
\WPoint(20,4,1)\nput{90}{1}{$1$}
\WPoint(40,4,2)\nput{90}{2}{$2$}
\ncarc[nodesep=1mm,arcangle=40]{->}01\naput{$\tta$}
\ncarc[nodesep=1mm,arcangle=0]{->}01\naput{$\ttb$}
\ncarc[nodesep=1mm,arcangle=-40]{->}01\naput{$\ttc$}
\ncarc[nodesep=1mm,arcangle=40]{->}12\naput{$\tta'$}
\ncarc[nodesep=1mm,arcangle=0]{->}12\naput{$\ttb'$}
\ncarc[nodesep=1mm,arcangle=-40]{->}12\naput{$\ttc'$}
\end{picture}$$
and the relations~\eqref{E:C6} satisfied.
Zigzags need to be replaced by finite composable sequences~$\xxv$ of non-identity arrows of either~$\CCC_6$ or its opposite category, with the source and the target of~$\xxv$ identical.
The crucial point is the observation that Proposition~\ref{P:PosBase} can be extended, with a similar proof, to any category.

Finally, reduction is not convergent for~$\MMD$, as the $3$-Ore condition fails in the monoid~$\MMD$: the elements~$\tta, \ttb, \ttc$ pairwise admit common right multiples, but they admit no global common right multiple.
\end{proof}

%%%%
\subsection{Artin-Tits monoids}\label{SS:AT}

Multifraction reduction was primarily introduced for investigating Artin-Tits monoids, which differ from the interval monoids of posets considered above in many aspects. However, the method of proof developed in Section~\ref{S:NCInt} might extend to further monoids. Owing to Proposition~\ref{P:Base}, the point would be to identify a family that contains all $\piece$-minimal unital multifractions and study their reduction. The approach may be of any interest only if that family is significantly smaller than the family of all multifractions---as is the family of simple multifractions in the case of an interval monoid.

What makes interval monoids (and, more generally, the category monoids of Subsection~\ref{SS:CatMon}) specific is the existence of the source and target maps~$\sr, \tg$, and the possibility of using them to split the elements of the monoid and the multifractions. As mentioned in Remark~\ref{R:Garside}, the normal decomposition of Proposition~\ref{P:NF} is a special case of the greedy normal form associated with a Garside family, and, in this case, simple multifractions are obtained by gluing matching elements of the smallest Garside family. A smallest Garside family exists in every gcd-monoid, and imitating the construction of Section~\ref{S:NCInt} suggests to call a multifraction simple if its entries lie in the smallest Garside family and are matching, in some sense to be defined. In the case of an Artin-Tits of spherical type, or more generally of FC~type, the notion of a signed word drawn in a finite fragment of the Cayley graph of~$\MM$ as considered in~\cite[Def.~V.2.2]{Dhr} could provide a natural candidate. However, in contrast with the case of interval monoids, the family of $\piece$-minimal multifractions is infinite in general: for instance, if $\MM$ is the free commutative monoid on~$\{\tta, \ttb, \ttc\}$ (which satisfies the $3$-Ore condition, and even the $2$-Ore condition), the $6$-multifraction $\tta^\pp/\ttb^\pp/\ttc^\pp/\tta^\pp/\ttb^\pp/\ttc^\pp$ is unital and $\piece$-minimal for every~$\pp \ge 1$. Thus, even in such an easy case, describing all $\piece$-minimal multifractions is not obvious, and there seems to be still a long way before completing the approach for an arbitrary Artin--Tits monoid.

%%%%%
\bibliographystyle{plain}
%\bibliography{Biblio}

\newcommand{\noopsort}[1]{}

\end{document}